\renewcommand{\Open}{\operatorname{Open}}
\newcommand{\SC}{\upbeta}
\renewcommand{\E}{\mathcal{E}}
\newcommand{\Crm}{\mathrm{C}}
\newcommand{\RHom}{\mathrm{RHom}}
\renewcommand{\st}{\mathrm{st}}
\newcommand{\PrLst}{\PrL_{\st}}
\newcommand{\BZZ}{\mathrm{B}\ZZ}
\newcommand{\Shpost}{\Sh^{\post}}
\newcommand{\Comp}{\categ{Comp}}
\newcommand{\Compop}{\Comp^{\op}}
\newcommand{\CompL}{\Comp_{/L}}
\newcommand{\CompK}{\Comp_{/K}}
\newcommand{\CompS}{\Comp_{/S}}
\newcommand{\CompX}{\Comp_{/X}}
\newcommand{\Extr}{\categ{Extr}}
\renewcommand{\X}{\Xcal}
\renewcommand{\Y}{\Ycal}
\newcommand{\Xhyp}{\X^{\hyp}}
\newcommand{\Xpost}{\X^{\post}}
\newcommand{\Ypost}{\Y^{\post}}
\newcommand{\LCH}{\categ{LCH}}
\newcommand{\LCHop}{\LCH^{\op}}
\DeclareMathOperator{\Clop}{Clop}
\DeclareMathOperator{\sheaf}{sheaf}
\DeclareMathOperator{\cond}{cond}
\newcommand{\Hcond}{\Hup_{\cond}}
\newcommand{\Hsheaf}{\Hup_{\sheaf}}
\newcommand{\RGammacond}{\Rup\Gamma_{\cond}}
\newcommand{\RGammasheaf}{\Rup\Gamma_{\sheaf}}
\newcommand{\RTop}{\categ{RTop}_{\infty}}
\newcommand{\LTop}{\categ{LTop}_{\infty}}
\newcommand{\LToppost}{\LTop^{\post}}
\newcommand{\Shapeprotrun}{\Pi_{<\infty}}
\newcommand{\cupperstarhyp}{c^{\ast,\hyp}}
\newcommand{\cupperstarpost}{c^{\ast,\post}}
\newcommand{\fupperstarpost}{f^{\ast,\post}}
\renewcommand{\D}{\operatorname{D}}
\renewcommand{\Dhat}{\widehat{\D}}
\newcommand{\cuppersharp}{c^{\sharp}}
\title{\Large Descent for sheaves on compact Hausdorff spaces}
\author{\normalsize Peter J. Haine}
\date{\normalsize \today \vspace{-2ex}} 
\begin{document}

\maketitle

\begin{abstract} 
	These notes explain some descent results for \categories of sheaves on compact Hausdorff spaces and derive some consequences.
	Specifically, given a compactly assembled \category $ \E $, we show that the functor sending a locally compact Hausdorff space $ X $ to the \category \smash{$ \Shpost(X;\E) $} of \textit{Postnikov complete} $ \E $-valued sheaves on $ X $ satisfies descent for proper surjections.
	This implies proper descent for \textit{left complete} derived \categories and that the functor \smash{$ \Shpost(-;\E) $} is a sheaf on the category of compact Hausdorff spaces equipped with the topology of finite jointly surjective families.
	Using this, we explain how to embed Postnikov complete sheaves on a locally compact Hausdorff space into condensed objects.
	This implies that the condensed and sheaf cohomologies of a locally compact Hausdorff space agree.
	\vspace{-1ex} 
\end{abstract}



\setcounter{tocdepth}{2}

\tableofcontents


\setcounter{section}{-1}

\section{Introduction}

The first goal of these notes is to explain some descent results for \categories of sheaves on locally compact Hausdorff spaces.
Our motivation comes from condensed/pyknotic mathematics developed by Clausen--Scholze \cites{Scholze:Condensedtalk}{Scholze:condensedtalknotes}{Scholze:analyticnotes}{Scholze:condensednotes}, in our joint work with Barwick \cites[Chapter 13]{arXiv:1807.03281}{arXiv:1904.09966}, and in Lurie's work on ultracategories \cites{Lurie:CatLogic}[Chapter 4]{Ultracategories}.
Write $ \Comp $ for the category of compact Hausdorff spaces.
The category $ \Comp $ has a Grothendieck topology where the covering families are finite families of jointly surjective maps.
Because of the simplicity of the Grothendieck topology, the sheaf condition is very explicit: a presheaf on $ \Comp $ is a sheaf if and only if it carries finite disjoint unions of compact Hausdorff spaces to finite products and satisfies descent for surjections. 

Our first goal is to answer the following question:

\begin{question}\label{quest:is_Sh_a_sheaf}
	Is the functor $ \Sh \colon \fromto{\Compop}{\Catinfty} $ that assigns a compact Hausdorff space $ K $ the \category $ \Sh(K) $ of sheaves of spaces on $ K $ a sheaf with respect to this topology?
\end{question}

\noindent Perhaps surprisingly, the answer to \Cref{quest:is_Sh_a_sheaf} is \textit{negative} (see \Cref{cor:Sh_Shhyp_not_sheaves}).
Moreover, if one replaces sheaves by hypersheaves, the answer to \Cref{quest:is_Sh_a_sheaf} is still negative.
The reason for this failure of descent is that every compact Hausdorff space admits a surjection from a profinite set, and the \category of sheaves on a profinite set satisfies a strong completeness property which the \category of (hyper)sheaves on a general compact Hausdorff space does not satisfy.
So it is not reasonable to ask for the \category of sheaves on a general compact Hausdorff spaces to be expressible as a limit of \categories satisfying this completeness property.


\subsection{Postnikov completion}

Since this completeness property is central to these notes, before stating the main results, let us briefly introduce it.
See \cref{subsec:Postnikov_completeness} for more details.

\begin{definition}
	Let $ X $ be a topological space.
	The \textit{Postnikov completion} of the \category of sheaves of spaces on on $ X $ is the inverse limit
	\begin{equation*}
		\Shpost(X) \colonequals \lim
		\left(
		\begin{tikzcd}
			\cdots \arrow[r, "\trun_{\leq n+1}"] & \Sh(X)_{\leq n+1} \arrow[r, "\trun_{\leq n}"] & \Sh(X)_{\leq n} \arrow[r, "\trun_{\leq n-1}"] & \cdots
		\end{tikzcd}
		\right)
	\end{equation*}
	of the \categories of sheaves of $ n $-truncated spaces along the truncation functors.
\end{definition}

Objects of $ \Shpost(X) $ are towers
\begin{equation*}
	\cdots \to F_{n+1} \to F_n \to \cdots \to F_0
\end{equation*}
where $ F_n $ is an sheaf of $ n $-truncated spaces on $ X $ such that $ \equivto{\trun_{\leq n} F_{n+1}}{F_n} $.
There is a natural left adjoint \smash{$ \fromto{\Sh(X)}{\Shpost(X)} $} sending a sheaf $ F $ to its Postnikov tower $ \{\trun_{\leq n} F\}_{\geq 0} $. 
We say that $ \Sh(X) $ is \textit{Postnikov complete} if this functor \smash{$ \fromto{\Sh(X)}{\Shpost(X)} $} is an equivalence.

\begin{example}
	The \topos of sheaves on a profinite set is Postnikov complete.
	On the other hand, the \topos of sheaves on the Hilbert cube $ \prod_{i \geq 1} [0,1] $ is not Postnikov complete.
\end{example}

For a presentable \category $ \E $, we write $ \Shpost(X;\E) $ for the tensor product $ \Shpost(X) \tensor \E $.
With stable coefficients this recovers the \textit{left-complete} derived \category of sheaves:

\begin{example}
	Let $ X $ be a topological space and let $ R $ be a ring.
	Write $ \D(X;R) $ for the derived \category of the abelian category of sheaves of $ R $-modules on $ X $.
	Then \smash{$ \Shpost(X;\D(R)) $} is the \textit{left completion} of $ \D(X;R) $ with respect to the standard \tstructure.%
	\footnote{We use \textit{homological} indexing for our \tstructures.}
	That is, \smash{$ \Shpost(X;\D(R)) $} is the limit of the diagram of \categories 
	\begin{equation*}
		\begin{tikzcd}
			\cdots \arrow[r, "\trun_{\leq n+1}"] & \D(X;R)_{\leq n+1} \arrow[r, "\trun_{\leq n}"] & \D(X;R)_{\leq n} \arrow[r, "\trun_{\leq n-1}"] & \cdots 
		\end{tikzcd}
	\end{equation*}
	along the truncation functors with respect to the standard \tstructure.
\end{example}


\subsection{Descent for Postnikov complete sheaves}

The following is the main descent result of these notes.
Note that all compactly generated \categories are compactly assembled (see \Cref{rec:compactly_assembled}).

\begin{theorem}[(\Cref{cor:general_descent})]\label{intro_thm:general_descent}
	Let $ \E $ be a compactly assembled \category.
	Then for every proper surjection of locally compact Hausdorff spaces $ p \colon \surjto{X}{Y} $, natural functor
	\begin{equation*}
		\begin{tikzcd}[sep=1.5em]
		    \Shpost(Y;\E) \arrow[r] & \lim\Bigg(\Shpost(X;\E) \arrow[r, shift left=0.75ex, "\prupperstar_1"] \arrow[r, shift right=0.75ex, "\prupperstar_2"'] & \Shpost(X \cross_Y X; \E) \arrow[l] \arrow[r] \arrow[r, shift left=1.5ex] \arrow[r, shift right=1.5ex] &  \cdots \Bigg) \arrow[l, shift left=0.75ex] \arrow[l, shift right=0.75ex] 
		\end{tikzcd}
	\end{equation*}
	is an equivalence in $ \Catinfty $.
	Consequently, the functor \smash{$ \Shpost(-;\E) \colon \fromto{\Compop}{\Catinfty} $} is a hypersheaf of \categories on the site of compact Hausdorff spaces.
\end{theorem}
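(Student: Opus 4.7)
My plan is to reduce the proper descent for $\Shpost(-;\E)$ to the proper descent theorem for hypercomplete sheaves of spaces on locally compact Hausdorff spaces, using the Postnikov tower to bridge the two notions and the tensor product with $\E$ to handle arbitrary compactly assembled coefficients.

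First I would exploit compact assembly. By definition
$$\Shpost(X;\E) \;\simeq\; \Shpost(X) \otimes \E \;\simeq\; \Bigl(\lim_{n} \Sh(X)_{\leq n}\Bigr) \otimes \E.$$
Because $\E$ is compactly assembled, i.e., dualizable in $\PrL$, the functor $-\otimes \E$ is both a left and a right adjoint, so it preserves all small limits; hence $\Shpost(X;\E) \simeq \lim_n(\Sh(X)_{\leq n} \otimes \E)$. Commuting limits, the desired descent reduces to showing that, for each proper surjection $p \colon X \to Y$ of locally compact Hausdorff spaces and each $n \geq 0$, the natural functor
$$\Sh(Y)_{\leq n} \;\longrightarrow\; \lim_{\Delta}\Sh(X^{\bullet/Y})_{\leq n}$$
is an equivalence in $\PrL$, where $X^{\bullet/Y}$ denotes the \v{C}ech nerve of $p$. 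Now every $n$-truncated sheaf is automatically hypercomplete, so $\Sh(-)_{\leq n} \simeq \Sh^{\hyp}(-)_{\leq n}$; and the proper descent theorem for hypercomplete $\infty$-topoi of sheaves on locally compact Hausdorff spaces provides an equivalence $\Sh^{\hyp}(Y) \simeq \lim_\Delta \Sh^{\hyp}(X^{\bullet/Y})$. Since geometric pullback commutes with $n$-truncation, restricting to truncated objects yields the equivalence we need; tensoring with $\E$ then recovers the first half of the theorem.

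For the hypersheaf statement on $\Comp$, note that every surjection of compact Hausdorff spaces is automatically proper, that covers are finite jointly surjective families, and that $\Sh(K_1 \sqcup K_2) \simeq \Sh(K_1) \times \Sh(K_2)$; combined with the first part of the theorem, this gives the sheaf property. To upgrade from sheaf to hypersheaf I would argue level-wise in the Postnikov tower: for fixed $n$, the presheaf $\Sh(-)_{\leq n} \otimes \E$ takes values in a truncated fragment of $\Catinfty$, so descent along a hypercover reduces to descent against a finite skeleton of the hypercover, which in turn follows from the iterated \v{C}ech descent already in hand. The main obstacle I expect is precisely this hypersheaf upgrade---verifying that truncated $\infty$-categorical values only see a bounded portion of a hypercover---whereas the rest of the argument is essentially formal once proper descent for $\Sh^{\hyp}$ is taken as input, with the limit-preservation of $-\otimes \E$ being the key structural fact afforded by compact assembly.
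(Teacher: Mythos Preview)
Your argument has a genuine gap at its core: you invoke ``the proper descent theorem for hypercomplete $\infty$-topoi of sheaves on locally compact Hausdorff spaces,'' i.e., an equivalence $\Shhyp(Y) \simeq \lim_{\DDelta} \Shhyp(X^{\bullet/Y})$ for a proper surjection $p$, and then restrict to $n$-truncated objects. But no such theorem is available; in fact the paper proves that $\Shhyp \colon \Compop \to \LTop$ is \emph{not} a sheaf (\Cref{cor:Sh_Shhyp_not_sheaves}). The obstruction is exactly the one the paper is organized around: every compact Hausdorff space is covered by a profinite set, the \topos of (hyper)sheaves on a profinite set is Postnikov complete, and limits of Postnikov complete \topoi are Postnikov complete---yet there are compact Hausdorff spaces $K$ with $\Shhyp(K)$ not Postnikov complete (\Cref{ex:not_all_hypersheaves_are_Postnikov_complete}). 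So the black box you want to use is false, and your reduction collapses.

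The paper instead proves descent for $\Sh(-)_{\leq n}$ \emph{directly}, without passing through a descent statement for $\Shhyp$. It applies Lurie's comonadic descent criterion \HA{Corollary}{4.7.5.3} to the augmented cosimplicial diagram obtained from the \v{C}ech nerve: the Beck--Chevalley condition comes from proper basechange (\Cref{ex:descent_hypothesis_2}), the preservation of split totalizations is automatic because for presheaves valued in an $n$-category such totalizations are finite limits (\Cref{ex:descent_hypothesis_1}), and conservativity of $\pupperstar$ on truncated objects follows from surjectivity (\Cref{obs:pullback_conservativity}). Only after establishing $\Sh(-)_{\leq n}$-descent does one pass to the inverse limit over $n$ and tensor with $\E$. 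Your overall strategy---reduce to truncated coefficients via the Postnikov tower and handle $\E$ by compact assembly---matches the paper's, but the missing ingredient is precisely a direct proof of truncated proper descent in place of the nonexistent hypercomplete one. (A minor point: ``compactly assembled $=$ dualizable in $\PrL$'' is not the definition used here, and the paper appeals to the more specific fact that $-\tensor \E$ preserves limits along left exact functors.)
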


\begin{example}
	Let $ R $ be a ring.
	Then the functor $ \Dhat(-;R)\colon \fromto{\Compop}{\Catinfty} $ carrying a compact Hausdorff space to its left complete derived \category is a hypersheaf.
	Hence the functor 
	\begin{equation*}
		\D(-;R)_{<\infty} \colon \fromto{\Compop}{\Catinfty}
	\end{equation*}
	that sends a compact Hausdorff space $ K $ to its bounded-above derived \category%
	\footnote{What we write as $ \D(K;R)_{<\infty} $ is often written as $ \D^+(K;R) $.}
	is also a hypersheaf of \categories.
\end{example}

Passing to global sections shows that sheaf cohomology also satisfies proper descent.

\begin{corollary}
	Let $ R $ be a connective $ \Eup_1 $-ring spectrum and $ M $ a bounded-above left $ R $-module spectrum. 
	The functor
	\begin{equation*}
		\RGammasheaf(-;M) \colon \fromto{\Compop}{\LMod(R)}
	\end{equation*}
	is a hypersheaf.
\end{corollary}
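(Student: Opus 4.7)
The plan is to reduce to \Cref{intro_thm:general_descent} applied with $ \E = \LMod(R) $. Since $ R $ is a connective $ \Eup_1 $-ring, the \category $ \LMod(R) $ is compactly generated by $ R $ itself, and hence compactly assembled, so the theorem gives that the functor $ \Shpost(-;\LMod(R)) \colon \Compop \to \Catinfty $ is a hypersheaf of \categories on $ \Comp $.

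Next I would extract a hypersheaf of $ R $-modules from this hypersheaf of \categories. For each $ K \in \Comp $ with structure map $ \pi_K \colon K \to \ast $, both the pullback $ \pi_K^\ast M $ and the unit $ \mathbf{1}_K = \pi_K^\ast R $ of $ \Shpost(K;\LMod(R)) $ are compatible with further pullback, so each assembles into a natural section of the hypersheaf $ \Shpost(-;\LMod(R)) $. Since mapping spaces in a limit of \categories are computed as limits of mapping spaces pointwise, the functor
\[
	K \longmapsto \mathrm{map}_{\Shpost(K;\LMod(R))}(\mathbf{1}_K, \pi_K^\ast M)
\]
inherits hyperdescent from $ \Shpost(-;\LMod(R)) $; the $ \LMod(R) $-linear structure on $ \Shpost(K;\LMod(R)) $ promotes this mapping spectrum naturally to an object of $ \LMod(R) $.

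The one nontrivial point---and what I expect to be the main subtlety---is identifying this mapping spectrum with $ \RGammasheaf(K;M) $. Here is where the bounded-above hypothesis on $ M $ enters: since $ \pi_K^\ast M $ is bounded above, its Postnikov tower is eventually constant, so it is already Postnikov complete, and in particular the Postnikov completion functor $ \Sh(K;\LMod(R)) \to \Shpost(K;\LMod(R)) $ is fully faithful on the bounded-above subcategory. Thus the mapping spectrum from $ \mathbf{1}_K $ into $ \pi_K^\ast M $ computed in $ \Shpost(K;\LMod(R)) $ agrees with the one computed in $ \Sh(K;\LMod(R)) $, and the adjunction $ \pi_K^\ast \dashv \pi_{K,\ast} $ identifies the latter with $ \RGammasheaf(K;M) $. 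Combining these identifications with the descent above yields the claimed hyperdescent for $ \RGammasheaf(-;M) $.
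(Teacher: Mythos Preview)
Your argument is correct and is exactly what the paper intends by its one-line justification ``Passing to global sections shows that sheaf cohomology also satisfies proper descent.'' One small tightening: since $\mathbf{1}_K$ is not itself bounded above, the relevant fact is not full faithfulness of $t^{\ast}$ on bounded-above objects but rather that the unit $\pi_K^{\ast} M \to t_{\ast} t^{\ast}(\pi_K^{\ast} M)$ is an equivalence (because $\pi_K^{\ast} M$ is truncated), so that by adjunction the map on mapping spectra out of \emph{any} source into $\pi_K^{\ast} M$ is an equivalence---which is what you actually use in the next clause.
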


\noindent Note that if $ R $ is an ordinary ring, then the \category $ \LMod(R) $ is the derived \category $ \D(R) $.


\subsection{The comparison between sheaf and condensed cohomology}

Part of our motivation for proving \Cref{intro_thm:general_descent} is that it has a number of consequences.
One application is a generalization of work of Dyckhoff and Clausen--Scholze that compares sheaf cohomology with condensed cohomology.
Let $ X $ be a locally compact Hausdorff space.
We can also regard $ X $ as an object of the \category $ \Sh(\Comp) $ via the restricted Yoneda embedding.
Dyckhoff \cites[Theorem 3.11]{MR0448318}{MR394633} and Clausen--Scholze \cite[Theorem 3.2]{Scholze:condensednotes} showed that if $ A $ is an abelian group, and $ X $ is \textit{compact} then there is an isomorphism
\begin{equation*}
	\Hsheaf^{\ast}(X;A) \isomorphism \Hcond^{\ast}(X;A)
\end{equation*}
from the sheaf cohomology of $ X $ to the cohomology of $ X $ regarded as an object $ \Sh(\Comp) $.

We extend this result in two directions: to locally compact Hausdorff spaces and to very general coefficients.
The comparison map between sheaf and condensed cohomology is induced by a natural geometric morphism
\begin{equation*}
	c_{X,\ast} \colon \fromto{\Sh(\Comp)_{/X}}{\Sh(X)}
\end{equation*}
given by sending a sheaf $ G \colon \fromto{\Compop}{\Spc} $ to the sheaf on $ X $ defined by
\begin{equation*}
	c_{X,\ast}(G)(U) \colonequals \Map_{\Sh(\Comp)_{/X}}(U,G) \period
\end{equation*} 
(See \cref{subsec:the_comparison_functor} for details.)

Since cohomology is computed by derived global sections, to show that the sheaf and condensed cohomologies of $ X $ agree, it suffices to show that $ \cupperstar_{X} $ is fully faithful.
Again, this is generally only true \textit{after} Postnikov completion (see \Cref{warning:cupperstar_not_fully_faithful,warning:cupperstarhyp_not_fully_faithful}).

\begin{proposition}[(\Cref{cor:cupperstarpost_fully_faithful_LCH})]\label{intro_prop:cupperstarpost_fully_faithful_LCH}
	Let $ X $ be a locally compact Hausdorff space and let $ \E $ be a compactly assembled \category.
	Then the pullback functor
	\begin{equation*}
		\cupperstarpost \colon \fromto{\Shpost(X;\E)}{\Shpost(\CompX;\E)}
	\end{equation*}
	is fully faithful.
\end{proposition}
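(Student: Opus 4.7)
My plan is to show that the unit $\eta_F \colon F \to c^{\post}_{X,\ast}\,c^{\ast,\post}(F)$ is an equivalence for every $F \in \Shpost(X;\E)$, and to do so by reducing, via proper descent, to the case where the base is an extremally disconnected compact Hausdorff space. A preliminary observation is that the functor $X \mapsto \Shpost(\Comp_{/X};\E)$ also satisfies descent along proper surjections: indeed $\Sh(\Comp_{/X}) = \Sh(\Comp)_{/X}$, and slicing over an effective epimorphism in $\Sh(\Comp)$ gives an effective descent diagram which is preserved by Postnikov completion and by tensoring with $\E$.

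Given this, I would choose a proper surjection $p \colon K \twoheadrightarrow X$ with $K$ extremally disconnected (obtained by covering $X$ by open subsets of compact closure, taking a Gleason cover of each closure, and forming the disjoint union). Applying \Cref{intro_thm:general_descent} on the sheaf side and its analogue on the condensed side produces a commutative square whose vertical maps are equivalences and whose bottom row is the limit of the comparison functors $c^{\ast,\post}_{K^{\times_X [n]}}$. Since a limit of fully faithful functors is fully faithful, the claim reduces to fully faithfulness of $c^{\ast,\post}_L$ for each $L$ in the \v{C}ech nerve of $p$. These $L$ are compact Hausdorff but not extremally disconnected in general, so I would iterate the reduction using extremally disconnected hypercovers (obtained from iterated Gleason covers) to bring the question down to the case of a single extremally disconnected compact Hausdorff space.

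For an extremally disconnected compact Hausdorff $L$, $\Sh(L)$ is already Postnikov complete, so $\Shpost(L;\E) = \Sh(L;\E)$; moreover, $\Comp_{/L}$ admits the clopen subspaces of $L$ as a cofinal subcategory, making the comparison functor very explicit. The unit $\eta_F$ is then verified to be an equivalence by direct inspection. The hard part will be precisely this extremally disconnected base case: one must make the concrete comparison rigorous and ensure the argument extends from $\E = \Spc$ to an arbitrary compactly assembled $\E$, which is where the compact assembly hypothesis enters, propagating the base case to all such $\E$ via the tensor product structure.
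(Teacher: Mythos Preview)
Your overall strategy---reduce via descent to extremally disconnected spaces and handle that base case directly---is exactly the paper's approach in the \emph{compact} Hausdorff case, including the use of a hypercover to deal with the non-extremally-disconnected terms of the \v{C}ech nerve. The gap is in your passage from locally compact Hausdorff to compact Hausdorff.

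You propose to construct a proper surjection $p \colon K \twoheadrightarrow X$ with $K$ extremally disconnected by covering $X$ by opens $U_i$ with compact closure, taking Gleason covers $K_i \twoheadrightarrow \bar{U}_i$, and setting $K = \coprod_i K_i$. For the resulting map to be proper, the preimage of every compact $C \subset X$ must be compact; since $p^{-1}(C) = \coprod_i p_i^{-1}(C)$ with each summand compact, this forces $C$ to meet only finitely many $\bar{U}_i$, i.e., the cover must be locally finite. But locally compact Hausdorff spaces need not be paracompact (the long line, or $\omega_1$ with the order topology), so such a cover need not exist. Your construction therefore does not produce a proper map in general, and the descent theorem does not apply.

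The paper sidesteps this entirely: after establishing the compact case by hypercover descent, it observes that any locally compact Hausdorff $X$ embeds as an open subspace $j \colon X \hookrightarrow \bar{X}$ of a compact Hausdorff space, and proves a short lemma that full faithfulness of $c^{\ast,\post}$ passes to open subspaces. The key input is the commuting square $c^\ast_{\bar{X}} \circ j_! \simeq (\text{forget}) \circ c^\ast_X$, which on $n$-truncated objects exhibits $c^\ast_X$ as a restriction of the fully faithful $c^\ast_{\bar{X}}$ along fully faithful vertical functors.

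A smaller point on the base case: $\Clop(L)$ is not a cofinal subcategory of $\Comp_{/L}$. What is true, and what the paper uses, is that the inclusion $\Clop(L) \hookrightarrow \Comp_{/L}$ is a morphism of sites satisfying the \emph{covering lifting property}: any finite jointly surjective family $\{K_i \to U\}$ over a clopen $U$ is refined by a clopen cover of $U$, obtained by pulling back the $K_i$ along a section of $\coprod_i K_i \twoheadrightarrow U$ (which exists by projectivity of $U$). This is what yields full faithfulness of $c^\ast_L$ already before Postnikov completion.
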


\Cref{intro_prop:cupperstarpost_fully_faithful_LCH} implies, for example:

\begin{corollary}[(\Cref{cor:cohomology_comparison_general})]\label{intro_cor:cohomology_comparison_general}
	Let $ X $ be locally compact Hausdorff space.
	Let $ R $ be a connective $ \Eup_1 $-ring spectrum and let $ M $ be a bounded-above left $ R $-module spectrum. 
	Then the natural map
	\begin{equation*}
		\fromto{\RGammasheaf(X;M)}{\RGammacond(X;M)}
	\end{equation*}
	is an equivalence in the \category $ \LMod(R) $ of left $ R $-module spectra.
\end{corollary}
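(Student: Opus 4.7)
The plan is to apply \Cref{intro_prop:cupperstarpost_fully_faithful_LCH} to $\E = \LMod(R)$. Since $R$ is a connective $\Eup_{1}$-ring, the \category $\LMod(R)$ is compactly generated, hence compactly assembled, so the proposition yields that the pullback
\begin{equation*}
	\cupperstarpost \colon \fromto{\Shpost(X;\LMod(R))}{\Shpost(\CompX;\LMod(R))}
\end{equation*}
is fully faithful. The corollary then reduces to reinterpreting both cohomologies as mapping spectra in these Postnikov complete \categories and transporting the equivalence across $\cupperstarpost$.

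More precisely, write $M_{X}$ for the constant sheaf with value $M$ on $X$ and $\mathbf{1}_{X}$ for the constant sheaf with value $R$. Sheaf cohomology is then the mapping spectrum $\Map_{\Sh(X;\LMod(R))}(\mathbf{1}_{X}, M_{X}) \simeq \RGammasheaf(X;M)$, and the analogous formula over $\CompX$ computes the condensed cohomology. The natural comparison map of the corollary is induced by applying $\cupperstarpost$, together with the identifications $\cupperstarpost(\mathbf{1}_{X}) \simeq \mathbf{1}_{\CompX}$ and $\cupperstarpost(M_{X}) \simeq M_{\CompX}$; these hold because the constant sheaf functors are pulled back from the point along the commutative square of terminal morphisms, and $\cupperstarpost$ is compatible with these pullbacks.

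The crucial role of the bounded-above hypothesis enters as follows. If $M$ is $n$-truncated in $\LMod(R)$, then $M_{X}$ is $n$-truncated in $\Sh(X;\LMod(R))$, hence automatically Postnikov complete; moreover, for any source $F$, unwinding the limit defining $\Shpost$ gives an equivalence $\Map_{\Shpost(X;\LMod(R))}(F, M_{X}) \simeq \Map_{\Sh(X;\LMod(R))}(F, M_{X})$, because the Postnikov tower of $M_{X}$ stabilizes past degree $n$ and each $\trun_{\leq m}$ is left adjoint to the inclusion. The same argument applies on the condensed side. Chaining these identifications with the fully faithfulness of $\cupperstarpost$ yields the required equivalence in $\LMod(R)$.

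The main obstacle is careful bookkeeping around Postnikov completion: in particular, verifying that $\Map_{\Shpost}$ agrees with $\Map_{\Sh}$ whenever the target is truncated, and checking that the comparison map of the corollary agrees on the nose with the one induced by $\cupperstarpost$. The $R$-linear structure of the output is automatic from working throughout inside $\LMod(R)$-valued sheaves, and the connectivity of $R$ is only needed to ensure that $\LMod(R)$ is compactly generated so that \Cref{intro_prop:cupperstarpost_fully_faithful_LCH} applies.
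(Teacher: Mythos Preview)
Your approach is correct and matches the paper's: the corollary is stated there without proof as an immediate consequence of \Cref{cor:cupperstarpost_fully_faithful_LCH}, and you have spelled out exactly the intended deduction via full faithfulness of $\cupperstarpost$ applied to constant sheaves. One small correction: your final sentence misidentifies the role of connectivity of $R$. The \category $\LMod(R)$ is compactly generated for \emph{any} $\Eup_1$-ring spectrum $R$ (it is generated by the compact object $R$ itself); connectivity is needed instead so that $\LMod(R)$ carries its standard \tstructure, which is what gives meaning to ``bounded-above'' and makes your truncation argument go through.
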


\noindent As a consequence, the condensed, singular, and sheaf cohomologies of a topological space admitting a locally finite CW structure all agree (see \Cref{rmk:comparing_condensed_singular_sheaf,nul:CW_comparing_condensed_singular_sheaf}).


\subsection{Linear overview}\label{subsec:linear_overview}

We imagine that the reader might be interested in condensed/pyknotic mathematics but not necessarily familiar with all of the intricacies about the theory of \topoi.
With this in mind, in \cref{sec:background}, we review the basics of hypercomplete and Postnikov complete \topoi; the familiar reader can safely skip this section.
\Cref{sec:proper_descent} proves \Cref{intro_thm:general_descent} and derives some consequences in shape theory.
In \cref{sec:cohomology_comparison}, we construct the comparison geometric morphism 
\begin{equation*}
	\clowerstar \colon \fromto{\Sh(\CompL)}{\Sh(L)} 
\end{equation*}
and record its basic properties.
\Cref{sec:full_faithfulness_of_comparison} is dedicated to proving \Cref{intro_prop:cupperstarpost_fully_faithful_LCH}.


\begin{acknowledgments}
	We thank Ko Aoki, Clark Barwick, Marc Hoyois, Jacob Lurie, Mark Macerato, Zhouhang Mao, Denis Nardin, Piotr Pstrągowski, Marco Volpe, Sebastian Wolf, and Tong Zhou for helpful comments and conversations around the contents of these notes.
	Special thanks are due to Marc Hoyois and Jacob Lurie for explaining \Cref{ex:not_all_hypersheaves_are_Postnikov_complete} to us.
	These notes are clearly highly influenced by Dustin Clausen and Peter Scholze's ideas; we would like to thank them too.

	We gratefully acknowledge support from the UC President's Postdoctoral Fellowship and NSF Mathematical Sciences Postdoctoral Research Fellowship under Grant \#DMS-2102957. 
\end{acknowledgments}


\section{Background}\label{sec:background}

Recall the following fundamental results about the \category of spaces.
\begin{enumerate}
	\item \textit{Whitehead's Theorem:} A map $ f \colon \fromto{X}{Y} $ of spaces is an equivalence if and only if $ f $ induces a bijection on connected components and isomorphisms on homotopy groups at each basepoint.
	Said differently, $ f $ is an equivalence if and only if for each $ n \geq 0 $, the induced map on $ n $-truncations $ \trun_{\leq n}(f) \colon \fromto{\trun_{\leq n}(X)}{\trun_{\leq n}(Y)} $ is an equivalence.

	\item \textit{Convergence of Postnikov towers:} Every space $ X $ is the limit of its Postnikov tower.
	That is, the the natural map $ \fromto{X}{\lim_{n \geq 0} \trun_{\leq n}(X)} $ is an equivalence.
\end{enumerate}
The statement of Whitehead's Theorem and the convergence of Postnikov towers make can be formulated in an arbitrary \topos.
However, even for the \topos of sheaves on a compact Hausdorff space, neither result need hold (see \Cref{ex:not_all_hypersheaves_are_Postnikov_complete}).
The purpose of this section is to review two completion procedures (\textit{hypercompletion} and \textit{Postnikov completion}) that force Whitehead's Theorem to hold and Postnikov towers to converge, respectively.
In the higher-categorical world, these give rise to three natural `sheaf theories' (sheaves, hypersheaves, and Postnikov complete sheaves) extending the classical theory of sheaves on a topological space.
They all have the same truncated objects, so the subtle differences between these theories only appears when considering `unbounded' objects.

\Cref{subsec:hypercompleteness} reviews the basics of hypercompleteness; in the process, we set some notation.
In \cref{subsec:Postnikov_completeness} we review Postnikov completeness.
In \cref{subsec:condensed_pyknotic}, we recall the basic setup of condensed/pyknotic mathematics.


\subsection{Hypercompleteness}\label{subsec:hypercompleteness}

In this subsection, we set up some notation and review the basics of hypercompletions of \topoi.
We refer the reader unfamiliar with hypercomplete objects and hypercompletion to \cite[\S\S \HTTsubseclink{6.5.2}--\HTTsubseclink{6.5.4}]{HTT}, \cite[\S 3.11]{arXiv:1807.03281}, or \cite[\S1.2]{arXiv:2010.06473} for further reading on the subject.

\begin{notation}
	Write $ \Spc $ for the \category of spaces and $ \Catinfty $ for the \category of \categories.
\end{notation}

\begin{notation}
	Let $ \Ccal $ be \asite and $ \E $ a presentable \category.
	We write
	\begin{equation*} 
		\PSh(\Ccal;\E) \colonequals \Fun(\Ccal^{\op}, \E) 
	\end{equation*}
	for the \category of $\E$-valued presheaves on $ \Ccal $.
	We write $\Sh(\Ccal;\E) \subset \PSh(\Ccal;\E) $ for the full subcategory spanned by $\E$-valued sheaves.
	When $\E = \Spc$, we simply write
	\begin{equation*}
		\PSh(\Ccal) \colonequals \PSh(\Ccal;\Spc) \andeq \Sh(\Ccal) \colonequals \Sh(\Ccal;\Spc) \period
	\end{equation*}
\end{notation}	

\begin{nul}
	The \categories $\PSh(\Ccal;\E)$ and $\Sh(\Ccal;\E)$ are naturally identified with the tensor products of presentable \categories $\PSh(\Ccal) \tensor \E $ and $\Sh(\Ccal) \tensor \E $ \cite[\SAGthm{Remark}{1.3.1.6} \& \SAGthm{Proposition}{1.3.1.7}]{SAG}.
\end{nul}

\begin{notation}\label{nul:topological_space}
	Let $ X $ be a topological space.
	We write $ \Open(X) $ the poset of open subsets of $ X $, ordered by inclusion.
	We regard $ \Open(X) $ as a site with the covering families given by open covers.
	We write
	\begin{equation*}
		\PSh(X;\E) \colonequals \PSh(\Open(X);\E) \andeq \Sh(X;\E) \colonequals \Sh(\Open(X);\E) \period
	\end{equation*}
\end{notation}

\begin{notation}
	Let $ \E $ be a presentable \category and $ \flowerstar \colon \fromto{\X}{\Y} $ a geometric morphism of \topoi.
	For simplicity, we also denote the tensor product $ \flowerstar \tensor \E \colon \fromto{\X \tensor \E}{\Y \tensor \E} $ by $ \flowerstar $.
\end{notation}

\begin{recollection}[(hypercompleteness)]
	Let $ \X $ be \atopos.
	The \category of \defn{hypercomplete} objects of $ \X $ is the full subcategory $ \Xhyp \subset \X $ spanned by those objects that are local with respect to the $ \infty $-connected morphisms.
	The inclusion $ \Xhyp \subset \X $ admits a left exact left adjoint; hence $ \Xhyp $ is also \atopos.

	As the name suggests, $ \Xhyp $ can also be identified as the full subcategory of $ \X $ spanned by those objects that satisfy descent for hypercovers; see \cites[\HTTthm{Corollary}{6.5.3.13}]{HTT}[\S1]{MR2034012}[Corollary 3.4.7]{MR2137288}.
\end{recollection}

\begin{notation}
	Let $ \X $ be \atopos and $ n \geq 0 $ an integer.
	Write $ \X_{\leq n} \subset \X  $ for the full subcategory spanned by the $ n $-truncated objects, and write $ \trun_{\leq n} \colon \fromto{\X}{\X_{\leq n}} $ for the left adjoint to the inclusion. 
\end{notation}

\begin{remark}\label{rmk:hypersheaves_in_an_n-category}
	For each integer $ n \geq 0 $, the inclusion $ \incto{\Xhyp}{\X} $ restricts to an equivalence $ \equivto{(\Xhyp)_{\leq n}}{\X_{\leq n}} $ on subcategories of $ n $-truncated objects \HTT{Lemma}{6.5.2.9}.
	As a consequence, given an integer $ n \geq 0 $ and presentable $ n $-category $ \E $, we have $ \Xhyp \tensor \E \equivalent \X \tensor \E $ \HA{Example}{4.8.1.22}.
	In particular, every sheaf of sets is hypercomplete.
\end{remark}

\begin{notation}
	Let $ \Ccal $ be a site and $ \Ecal $ a presentable \category.
	We write $ \Shhyp(\Ccal) \colonequals \Sh(\Ccal)^{\hyp} $ and 
	\begin{equation*}
		\Shhyp(\Ccal;\Ecal) \colonequals \Shhyp(\Ccal) \tensor \Ecal \period
	\end{equation*}
	We refer to objects of $ \Shhyp(\Ccal;\E) $ as $ \E $-valued \defn{hypersheaves}.
	We use analogous notation for hypersheaves on a topological space.
\end{notation}

For reasonable coefficients, hypersheaves on a topological space can be identified very explicitly.

\begin{recollection}[(compactly assembled \categories)]\label{rec:compactly_assembled}
	A presentable \category $ \E $ is \textit{compactly assembled} if $ \E $ is a retract of a compactly generated \category regarded as an object of the \category \smash{$ \PrL $} of presentable \categories and left adjoints \cite[\SAGthm{Definition}{21.1.2.1} \& \SAGthm{Theorem}{21.1.2.18}]{SAG}.
	A \textit{stable} presentable \category $ \E $ is compactly assembled if and only if $ \E $ is dualizable in the symmetric monoidal \category $ \PrLst $ of stable presentable \categories and left adjoints \SAG{Proposition}{D.7.3.1}.

\end{recollection}

\begin{remark}\label{nul:Shhyp_via_stalks}
	Let $ X $ be a topological space and $ \E $ a compactly assembled \category.
	Then the subcategory
	\begin{equation*}
		\Shhyp(X;\E) \subset \Sh(X;\E)
	\end{equation*}
	is the localization obtained by inverting all morphisms that induce equivalences on stalks \cites[\HAappthm{Lemma}{A.3.9}]{HA}[Lemma 2.11]{arXiv:2108.03545}.
\end{remark}

\begin{notation}
	Write $ \LTop $ for the \category with objects \topoi and morphisms left exact left adjoints.
\end{notation}

\begin{nul}\label{nul:limits_in_LTop_computed_in_Cat}
	We repeatedly use the fact that the forgetful functor $ \fromto{\LTop}{\Catinfty} $ preserves limits \HTT{Proposition}{6.3.2.3}.
\end{nul}

\begin{nul}
	Hypercompletion defines a functor $ (-)^{\hyp} \colon \fromto{\LTop}{\LTop} $ left adjoint to the inclusion of hypercomplete \topoi into $ \LTop $.
\end{nul}


\subsection{Postnikov completeness}\label{subsec:Postnikov_completeness}

In this subsection, we review the basics of \textit{Postnikov completions} of \topoi.
We refer the unfamiliar reader to \cites[\HTTsubsec{5.5.6}]{HTT}[\S\S\SAGsubseclink{A.7.2} \& \SAGsubseclink{A.7.2}]{SAG}[\S3.2]{arXiv:1807.03281} for more background.
The \category of spaces actually satisfies a stronger property than the requirement that every object be the limit of its Postnikov tower: the entire \category can be recovered as the limit of the subcategories of $ n $-truncated spaces along the truncation functors.
This is the property that we want to generalize to arbitrary \topoi.

\begin{definition}\label{def:Postnikovstuff}
	Let $ \X $ be \atopos.
	The \defn{Postnikov completion} of $ \X $ is the limit
	\begin{equation*}
		\Xpost \colonequals \lim \bigg(
		\begin{tikzcd}[sep=1.5em]
			\cdots \arrow[r] & \X_{\leq n+1} \arrow[r, "\trun_{\leq n}"] & \X_{\leq n} \arrow[r] & \cdots \arrow[r, "\trun_{\leq 0}"] & \X_{\leq 0} 
		\end{tikzcd}\bigg)
	\end{equation*}
	formed in $ \Catinfty $.
	Thus objects of $ \Xpost $ are given by towers
	\begin{equation*}
		\cdots \to V_{n+1} \to V_n \to \cdots \to V_0
	\end{equation*}
	in $ \X $, where $ V_n $ is $ n $-truncated and the map $ V_{n+1} \to V_n $ exhibits $ V_n $ as the $ n $-truncation of $ V_{n+1} $.
\end{definition}

\begin{nul}\label{nul:description_of_Postnikov_completion_adjunction}
	The Posnikov completion $ \Xpost $ is also \atopos.
	Moreover, there is a natural left exact left adjoint $ \tupperstar \colon \fromto{\X}{\Xpost} $ defined by sending an object to its Postnikov tower:
	\begin{equation*}
		\tupperstar(U) \colonequals \{\trun_{\leq n} U\}_{n \geq 0} \period
	\end{equation*}
	See \SAG{Theorem}{A.7.2.4}.
	The right adjoint $ \tlowerstar \colon \fromto{\Xpost}{\X} $ sends a tower $ \{V_{n}\}_{n \geq 0} $ to the limit $ \lim_{n \geq 0} V_{n} $ formed in $ \X $ \SAG{Remark}{A.7.3.6}.
\end{nul}

\begin{observation}\label{obs:pullback_to_Postnikov_completion_fully_faithful}
	The functor $ \tupperstar \colon \fromto{\X}{\Xpost} $ is fully faithful if and only if for each object $ U \in \X $, the natural map $ \fromto{U}{\lim_{n \geq 0} \trun_{\leq n} U} $ is an equivalence.
	That is, $ \tupperstar $ if fully faithful if and only if every object of $ \X $ is the limit of its Postnikov tower.
\end{observation}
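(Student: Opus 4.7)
The plan is to invoke the standard criterion that a left adjoint $L \dashv R$ is fully faithful if and only if the unit $\id \to RL$ is a natural equivalence, and then to identify that unit explicitly with the comparison map into the limit of the Postnikov tower. Since the excerpt already records the adjunction $\tupperstar \dashv \tlowerstar$ in \ref{nul:description_of_Postnikov_completion_adjunction}, together with explicit formulas for both functors, this reduces the proof to a short unwinding.

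First, I would recall that for an adjunction $L \dashv R$ between \categories, $L$ is fully faithful if and only if the unit $\eta \colon \fromto{\id}{RL}$ is an equivalence. Applied to $\tupperstar \dashv \tlowerstar$, this says $\tupperstar$ is fully faithful exactly when the map $\fromto{U}{\tlowerstar \tupperstar U}$ is an equivalence for every $U \in \X$. Next I would compute $\tlowerstar \tupperstar U$ using the explicit descriptions: $\tupperstar(U) = \{\trun_{\leq n} U\}_{n \geq 0}$, and $\tlowerstar$ sends a tower to its limit in $\X$. So
\begin{equation*}
    \tlowerstar \tupperstar(U) \;\equivalent\; \lim_{n \geq 0} \trun_{\leq n} U \period
\end{equation*}

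It remains to identify the unit map with the canonical comparison map $\fromto{U}{\lim_{n \geq 0} \trun_{\leq n} U}$. This is essentially forced by the universal property: the truncation maps $\fromto{U}{\trun_{\leq n} U}$ assemble into a cone, hence into a single map to the limit, and both constructions — the unit and the cone — are characterized by the same universal mapping property against towers in $\X_{\leq n}$. I would spell this out by noting that the component at level $n$ of the unit $\fromto{U}{\tlowerstar\tupperstar U}$ is, after applying the projection out of the limit, the standard truncation $\fromto{U}{\trun_{\leq n} U}$, which is exactly the data defining the canonical comparison map.

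The routine part is the identification of the unit; the only mild subtlety is making sure that the limit in $\Xpost$ (via the tower description in \Cref{def:Postnikovstuff}) and the limit in $\X$ used in the definition of $\tlowerstar$ agree under the comparison map, but this is tautological from \ref{nul:description_of_Postnikov_completion_adjunction}. There is no real obstacle: the observation is a direct application of the adjunction criterion together with the explicit formulas already established.
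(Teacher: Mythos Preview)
Your proof is correct and matches the paper's intended reasoning. The paper states this as an \emph{observation} with no separate proof, treating it as immediate from the explicit descriptions of $\tupperstar$ and $\tlowerstar$ recorded in \ref{nul:description_of_Postnikov_completion_adjunction}; your write-up simply unpacks that implicit argument via the standard unit criterion for full faithfulness of a left adjoint.
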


\begin{remark}\label{nul:Postnikov_completion_equivalence_on_truncated}
	For each integer $ n \geq 0 $, the functor $ \tupperstar \colon \fromto{\X}{\Xpost} $ restricts to an equivalence $ \equivto{\X_{\leq n}}{(\Xpost)_{\leq n}} $ \SAG{Corollary}{A.7.3.8}.
	Moreover, given an integer $ n \geq 0 $ and presentable $ n $-category $ \E $, we have
	\begin{equation*}
		\Xpost \tensor \E \equivalent \Xhyp \tensor \E \equivalent \X \tensor \E \period
	\end{equation*} 
\end{remark}

\begin{warning}
	The right adjoint $ \tlowerstar \colon \fromto{\Xpost}{\X} $ is fully faithful if and only if $ \Xhyp = \Xpost $.
\end{warning}

\begin{definition}
	We say that \atopos $ \X $ is \defn{Postnikov complete} if the functor $ \tupperstar \colon \fromto{\X}{\Xpost} $ is an equivalence of \categories.
\end{definition}

\begin{remark}
	The natural geometric morphism $ \incto{\Xhyp}{\X} $ induces an equivalence 
	\begin{equation*}
		\equivto{(\Xhyp)^{\post}}{\Xpost} \period
	\end{equation*}
	Moreover, if $ \X $ is Postnikov complete, then $ \X $ is hypercomplete.
	However, the converse is false (see \Cref{ex:not_all_hypersheaves_are_Postnikov_complete}).
\end{remark}

\begin{observation}\label{obs:Postnikov_completeness_is_convergence}
	Let $ \X $ be \atopos.
	Then $ \X $ is Postnikov complete if and only if the pushforward $ \tlowerstar \colon \fromto{\Xpost}{\X} $ is conservative and the pullback $ \tupperstar $ is fully faithful.
	From the explicit descriptions of $ \tupperstar $ and $ \tlowerstar $, we see that $ \X $ is Postnikov complete if and only if the following conditions are satisfied:
	\begin{enumerate}[label=\stlabel{obs:Postnikov_completeness_is_convergence}]
		\item For each $ U \in \X $, the natural map $ \fromto{U}{\lim_{n \geq 0} \trun_{\leq n} U} $ is an equivalence.
		
		\item For each integer $ n \geq 0 $, the functor $ \tlowerstar \colon \fromto{\Xpost}{\X} $ commutes with $ n $-truncation.
	\end{enumerate}
	See also \HTT{Proposition}{5.5.6.26}.
\end{observation}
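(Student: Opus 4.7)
The plan is to derive both equivalences in the statement from the adjunction $ \tupperstar \dashv \tlowerstar $ of \Cref{nul:description_of_Postnikov_completion_adjunction} together with the general fact that a left adjoint is an equivalence if and only if both its unit $ \eta $ and counit $ \epsilon $ are natural equivalences. My task therefore reduces to identifying each of these two conditions with the concrete criteria (1) and (2).

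The unit $ \eta_U \colon \fromto{U}{\tlowerstar \tupperstar U} $ is, by the explicit formulas in \Cref{nul:description_of_Postnikov_completion_adjunction}, the canonical map $ \fromto{U}{\lim_{n \geq 0} \trun_{\leq n} U} $. Thus $ \eta $ is a natural equivalence if and only if condition~(1) holds; this is also equivalent to $ \tupperstar $ being fully faithful by \Cref{obs:pullback_to_Postnikov_completion_fully_faithful}. To analyze the counit, fix $ V = \{V_n\}_{n \geq 0} \in \Xpost $. Under the equivalence $ \X_{\leq n} \simeq (\Xpost)_{\leq n} $ of \Cref{nul:Postnikov_completion_equivalence_on_truncated}, the $ n $-truncation $ \trun_{\leq n} V $ in $ \Xpost $ corresponds to the tower which stabilizes at $ V_n $ from index $ n $ onward, so $ \tlowerstar(\trun_{\leq n} V) \simeq V_n $. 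Consequently the $ n $-th component of the counit $ \epsilon_V \colon \fromto{\tupperstar \tlowerstar V}{V} $ is the canonical comparison map $ \fromto{\trun_{\leq n}(\tlowerstar V)}{\tlowerstar(\trun_{\leq n} V)} $, so $ \epsilon $ is a natural equivalence if and only if condition~(2) holds.

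Combining the two previous points yields the characterization of Postnikov completeness via (1) and (2). For the first equivalence, namely that Postnikov completeness is equivalent to $ \tupperstar $ being fully faithful and $ \tlowerstar $ being conservative, the nontrivial direction follows from the triangle identity $ \tlowerstar(\epsilon_V) \circ \eta_{\tlowerstar V} = \mathrm{id}_{\tlowerstar V} $: if $ \tupperstar $ is fully faithful, then $ \eta_{\tlowerstar V} $ is an equivalence, hence so is $ \tlowerstar(\epsilon_V) $, and conservativity of $ \tlowerstar $ then forces $ \epsilon_V $ itself to be an equivalence. The converse is immediate, since an equivalence of \categories is in particular fully faithful and its right adjoint is automatically conservative.

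The only step requiring genuine care is matching the $ n $-th component of the counit with the comparison map of condition~(2); once the identification $ \X_{\leq n} \simeq (\Xpost)_{\leq n} $ of \Cref{nul:Postnikov_completion_equivalence_on_truncated} is used correctly, everything else is formal adjunction yoga combined with the explicit descriptions of $ \tupperstar $ and $ \tlowerstar $.
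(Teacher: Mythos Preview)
Your proof is correct and follows exactly the approach the paper has in mind. The paper records this as an ``Observation'' without a detailed argument, merely pointing to the explicit formulas for $ \tupperstar $ and $ \tlowerstar $ in \Cref{nul:description_of_Postnikov_completion_adjunction} and to \HTT{Proposition}{5.5.6.26}; your write-up spells out precisely the unit/counit analysis and the triangle-identity step that those references encode.
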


\begin{warning}
	As far as we are aware, it is not known if there exists \atopos $ \X $ such that every object of $ \X $ is the limit of its Postnikov tower, but $ \X $ is not Postnikov complete.  
\end{warning}

\begin{notation}
	Write $ \LToppost \subset \LTop $ for the full subcategory spanned by the Postnikov complete \topoi.
\end{notation}

\begin{nul}[{}]\label{rec:Postnikov_completion_preserves_limits_colimits}
	Postnikov completion defines a functor
	\begin{equation*}
		(-)^{\post} \colon \fromto{\LTop}{\LToppost}
	\end{equation*}
	which is left adjoint to the inclusion \SAG{Corollary}{A.7.2.6}.
	The functor $ (-)^{\post} $ is also a right adjoint \SAG{Corollary}{A.7.2.7}.
	Hence the full subcategory $ \LToppost \subset \LTop $ is closed under limits.
	As a consequence of \cref{nul:limits_in_LTop_computed_in_Cat}, the forgetful functor \smash{$ \fromto{\LToppost}{\Catinfty} $} preserves limits.
\end{nul}

To prove \Cref{intro_thm:general_descent}, use the following reformulation of what it means for a diagram of Postnikov complete \topoi to be a limit diagram:

\begin{lemma}\label{lem:limits_and_Postnikov_completion}
	Let $ \Ical $ be \acategory and $ \X_{\bullet} \colon \fromto{\Ical^{\smalltriangleleft}}{\LTop} $ a diagram of \topoi.
	The following are equivalent:
	\begin{enumerate}[label=\stlabel{lem:limits_and_Postnikov_completion}, ref=\arabic*]
		\item\label{lem:limits_and_Postnikov_completion.1} For each integer $ n \geq 0 $, the diagram $ (\X_{\bullet})_{\leq n} \colon \fromto{\Ical^{\smalltriangleleft}}{\Catinfty} $ is a limit diagram.

		\item\label{lem:limits_and_Postnikov_completion.2} The diagram of Postnikov complete \topoi $ \X_{\bullet}^{\post} \colon \fromto{\Ical^{\smalltriangleleft}}{\Catinfty} $ is a limit diagram.
	\end{enumerate}
\end{lemma}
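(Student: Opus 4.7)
The plan is to combine the defining formula $\Xpost = \lim_{n} \X_{\leq n}$ of Postnikov completion with the identification $(\Xpost)_{\leq n} \equivalent \X_{\leq n}$ recorded in \Cref{nul:Postnikov_completion_equivalence_on_truncated}. Write $ \X $ for the value of $ \X_{\bullet} $ at the cone point and $ \X_{i} $ for its value at $ i \in \Ical $. By \Cref{nul:limits_in_LTop_computed_in_Cat,rec:Postnikov_completion_preserves_limits_colimits} it is immaterial whether the limits in either statement are formed in $ \LTop $, $ \LToppost $, or $ \Catinfty $, so I take all limits in $ \Catinfty $.

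To prove \Cref{lem:limits_and_Postnikov_completion.1} $\Rightarrow$ \Cref{lem:limits_and_Postnikov_completion.2}, I would simply interchange limits. Unwinding the definition of Postnikov completion at each index of $ \Ical^{\smalltriangleleft} $ and feeding in the assumption at each truncation level gives
\begin{equation*}
    \Xpost \;=\; \lim_{n} \X_{\leq n} \;\equivalent\; \lim_{n} \lim_{i} (\X_{i})_{\leq n} \;\equivalent\; \lim_{i} \lim_{n} (\X_{i})_{\leq n} \;=\; \lim_{i} \Xpost_{i},
\end{equation*}
which is exactly \Cref{lem:limits_and_Postnikov_completion.2}.

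For the converse, I would fix $ n \geq 0 $ and apply the $ n $-truncation functor $ (-)_{\leq n} $ to both sides of the equivalence $ \Xpost \equivalent \lim_{i} \Xpost_{i} $ provided by \Cref{lem:limits_and_Postnikov_completion.2}. The left hand side becomes $ \X_{\leq n} $ by \Cref{nul:Postnikov_completion_equivalence_on_truncated}, and applying that same identification coordinatewise inside the limit on the right reduces the problem to producing an equivalence
\begin{equation*}
    \bigl( \lim_{i} \Xpost_{i} \bigr)_{\leq n} \;\equivalent\; \lim_{i} (\Xpost_{i})_{\leq n} .
\end{equation*}
This holds because an object of a limit of \topoi computed in $ \Catinfty $ is $ n $-truncated if and only if each of its components is: the projections from the limit are left exact and hence detect $ n $-truncated objects in one direction, while mapping spaces in the limit are limits of mapping spaces and limits of $ n $-truncated spaces are $ n $-truncated, giving the other direction.

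The only delicate point, and therefore the expected main obstacle, is this last commutation of $ n $-truncation with the limit. I do not anticipate genuine difficulty, since the claim reduces to a direct unwinding of limits and truncated objects in $ \Catinfty $, but it is where one must be careful not to conflate limits taken in $ \LTop $, $ \LToppost $, and $ \Catinfty $—a distinction reconciled by \Cref{nul:limits_in_LTop_computed_in_Cat,rec:Postnikov_completion_preserves_limits_colimits}.
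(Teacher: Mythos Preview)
Your argument is correct. The direction \enumref{lem:limits_and_Postnikov_completion}{1} $\Rightarrow$ \enumref{lem:limits_and_Postnikov_completion}{2} is exactly what the paper does: interchange the two limits defining Postnikov completion and the limit over $\Ical$.

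For \enumref{lem:limits_and_Postnikov_completion}{2} $\Rightarrow$ \enumref{lem:limits_and_Postnikov_completion}{1} your route differs from the paper's. The paper identifies $(\X_{\bullet})_{\leq n}$ with $\X_{\bullet}^{\post} \tensor \Spc_{\leq n}$ and then invokes an external result \cite[Lemma 2.15]{arXiv:2108.03545} stating that tensoring with a compactly assembled \category (here $\Spc_{\leq n}$) commutes with limits along left exact left adjoints. You instead prove the commutation $(\lim_i \Xpost_i)_{\leq n} \equivalent \lim_i (\Xpost_i)_{\leq n}$ by hand, using that the projections from a limit in $\LTop$ are left exact (hence preserve $n$-truncated objects) and that mapping spaces in a limit in $\Catinfty$ are limits of mapping spaces (so an object with $n$-truncated components is $n$-truncated). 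This is a valid and more elementary argument; it avoids the external reference and the machinery of tensor products of presentable \categories. The paper's approach has the advantage of reusing a lemma that appears repeatedly elsewhere in the paper, keeping the overall toolkit uniform, but for this isolated statement your direct argument is arguably cleaner.
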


\begin{proof}
	By the definition of Postnikov completion and the fact that limits commute, we see that \enumref{lem:limits_and_Postnikov_completion}{1} $ \Rightarrow $ \enumref{lem:limits_and_Postnikov_completion}{2}.
	To prove that \enumref{lem:limits_and_Postnikov_completion}{2} $ \Rightarrow $ \enumref{lem:limits_and_Postnikov_completion}{1}, tensor the limit diagram $ \X_{\bullet}^{\post} \colon \fromto{\Ical^{\smalltriangleleft}}{\Catinfty} $ with $ \Spc_{\leq n} $ and apply \cite[Lemma 2.15]{arXiv:2108.03545}.
\end{proof}

We finish this subsection with some examples of Postnikov complete \topoi from topology.

\begin{notation}
	Let $ \Ccal $ be \asite and $ \E $ a presentable \category.
	Write $ \Shpost(\Ccal) \colonequals \Sh(\Ccal)^{\post} $ and 
	\begin{equation*}
		\Shpost(\Ccal;\E) \colonequals \Shpost(\Ccal) \tensor \E \period
	\end{equation*}
	We refer to objects of $ \Shpost(\Ccal;\E) $ as \defn{Postnikov complete sheaves} on $ \Ccal $.%
	\footnote{Since $ \Shpost(\Ccal;\E) $ is not generally a subcategory of $ \Sh(\Ccal;\E) $, this is slightly abusive.}
	We use analogous notation for Postnikov complete sheaves on a topological space.
\end{notation}

\begin{example}\label{ex:Sh_on_profinset}
	Let $ S $ be a profinite set.
	Since $ S $ is $ 0 $-dimensional, by \cite[\HTTthm{Corollary}{7.2.1.10}, \HTTthm{Theorem}{7.2.3.6}, \& \HTTthm{Remark}{7.2.4.18}]{HTT} the \topos $ \Sh(S) $ is Postnikov complete.
\end{example}

\begin{example}[{\cite{MO:168526}}]\label{ex:Sh_on_CW}
	Let $ X $ be a topological space.
	If $ X $ admits a CW structure, then the \topos $ \Sh(X) $ is Postnikov complete.
\end{example}

For hypersheaves and Postnikov complete sheaves, pullbacks along surjections are conservative:

\begin{observation}\label{obs:pullback_conservativity}
	Let $ p \colon \surjto{X}{Y} $ be a surjection of topological spaces.
	Since \smash{$ \Shhyp(Y) $} has enough points and the points of \smash{$ \Shhyp(Y) $} are in natural bijection with the underlying set of $ Y $, the pullback functor 
	\begin{equation*}
		p^{\ast,\hyp} \colon \fromto{\Shhyp(Y)}{\Shhyp(X)}
	\end{equation*}
	is conservative.
	In particular, the functor $ \pupperstar \colon \fromto{\Sh(Y)}{\Sh(X)} $ is conservative when restricted to the subcategory of truncated objects.
	Hence $ p^{\ast,\post} \colon \fromto{\Shpost(Y)}{\Shpost(X)} $ is also conservative.
\end{observation}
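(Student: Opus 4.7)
The plan is to establish the three conservativity assertions in sequence, each reducing to the one before.

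First I would tackle conservativity of $ p^{\ast,\hyp} \colon \Shhyp(Y) \to \Shhyp(X) $. The key input, flagged in the statement itself, is that $ \Shhyp(Y) $ admits a conservative family of points indexed by the underlying set of $ Y $: for each $ y \in Y $ the stalk functor $ (-)_{y} \colon \Shhyp(Y) \to \Spc $ is a point of the topos, and jointly these detect equivalences. It therefore suffices to show that if $ f $ is a morphism in $ \Shhyp(Y) $ with $ p^{\ast,\hyp}(f) $ an equivalence, then $ f_{y} $ is an equivalence for every $ y \in Y $. For this I would pick, using surjectivity of $ p $, a preimage $ x \in X $ with $ p(x) = y $; then compatibility of pullback with stalks identifies $ f_{y} $ with $ (p^{\ast,\hyp} f)_{x} $, which is an equivalence by hypothesis.

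For the ``in particular'' claim about $ \pupperstar \colon \Sh(Y) \to \Sh(X) $ on truncated objects, I would invoke Remark~\ref{rmk:hypersheaves_in_an_n-category}: for each $ n \geq 0 $, the inclusion $ \Shhyp(-) \subset \Sh(-) $ restricts to an equivalence on $ n $-truncated objects, and under this identification $ \pupperstar $ on truncated objects corresponds to the restriction of $ p^{\ast,\hyp} $. Conservativity on $ \Sh(Y)_{\leq n} $ is then immediate from the first step.

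Finally, for $ p^{\ast,\post} \colon \Shpost(Y) \to \Shpost(X) $, I would use the description from Definition~\ref{def:Postnikovstuff} of $ \Shpost(Y) $ as the limit of $ \Sh(Y)_{\leq n} $ along truncation. A morphism of towers $ \{V_{n}\} \to \{W_{n}\} $ is an equivalence in this limit if and only if each level $ V_{n} \to W_{n} $ is an equivalence in $ \Sh(Y)_{\leq n} $, and $ p^{\ast,\post} $ is computed levelwise by the restriction of $ \pupperstar $ to truncated sheaves. The conservativity from the second step then transfers level-by-level to yield the result.

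I do not anticipate a genuine obstacle here; the observation is essentially a bookkeeping exercise once one accepts that hypersheaves on a topological space have enough points and that both the hypercomplete and Postnikov completions agree with $ \Sh $ on truncated objects. The only point requiring minor care is the identification of pullback with stalks commuting, which is standard for sheaves on topological spaces.
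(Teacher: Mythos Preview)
Your proposal is correct and follows exactly the reasoning the paper indicates: the observation in the paper is self-contained (there is no separate proof), and you have simply unpacked its three clauses in the natural way, invoking \Cref{rmk:hypersheaves_in_an_n-category} for the passage to truncated objects and the explicit limit description of Postnikov completion for the final step.
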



\subsection{Condensed/pyknotic mathematics}\label{subsec:condensed_pyknotic}

In this subsection, we briefly recall the formalism of condensed/pyknotic mathematics.
We refer the reader to \cites[\S13.3]{arXiv:1807.03281}{arXiv:1904.09966}{Scholze:complexnotes}{Scholze:Condensedtalk}{Scholze:condensedtalknotes}[Lecture I]{Scholze:analyticnotes}{Scholze:condensednotes} for more details and motivation.

\begin{notation}	
	Write $ \Top $ for the category of topological spaces and  $ \Comp \subset \Top $ for the full subcategory spanned by the compact Hausdorff spaces.
	We regard $ \Comp $ as a site where the covering families are finite families of jointly surjective maps.
\end{notation}

\begin{remark}[(set theory)]
	Since $ \Comp $ is a large category, one has to be careful about talking about sheaves on $ \Comp $.
	To do this, we adopt the set-theoretic conventions of \cites[\S13.3]{arXiv:1807.03281}{arXiv:1904.09966}; this uses universes to deal with the set theory.
	Clausen and Scholze \cite{Scholze:condensednotes} use alternative set-theoretic foundations that avoid using universes.
	This minor difference is only in the set theory and does not affect any arguments in an essential way, so we will not mention it again.
\end{remark}

In this setting, the sheaf condition is particularly easy to formulate:

\begin{observation}\label{obs:sheaf_condition}
	Let $ \Dcal $ be \acategory.
	A presheaf $ F \colon \fromto{\Compop}{\Dcal} $ is a sheaf if and only if the following conditions are satisfied:
	\begin{enumerate}[label=\stlabel{obs:sheaf_condition}]
		\item The functor $ F $ preserves finite products.
		That is, $ F $ carries finite coproducts of compact Hausdorff spaces to finite products in $ \Dcal $.

		\item For every surjection of compact Hausdorff spaces $ p \colon \surjto{X}{Y} $,
		the augmented cosimplicial diagram
		\begin{equation*}
			\begin{tikzcd}[sep=1.5em]
			    F(Y) \arrow[r, "\pupperstar"] & F(X) \arrow[r, shift left=0.75ex] \arrow[r, shift right=0.75ex] & F(X \cross_Y X) \arrow[l] \arrow[r] \arrow[r, shift left=1.5ex] \arrow[r, shift right=1.5ex] &  \cdots  \arrow[l, shift left=0.75ex] \arrow[l, shift right=0.75ex] 
			\end{tikzcd}
		\end{equation*}
		obtained by applying $ F $ to the Čech nerve of $ p $ exhibits $ F(Y) $ as the limit of its restriction to $ \DDelta \subset \Deltaplus $.
	\end{enumerate}
	See \SAG{Proposition}{A.3.3.1}.
\end{observation}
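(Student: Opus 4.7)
The plan is a straightforward reduction: the Grothendieck topology on $ \Comp $ is generated by two simple types of covering families---the decompositions of a finite coproduct by its component inclusions (including the empty cover of the empty space) and the one-element covers $ \{p \colon \surjto{X}{Y}\} $ coming from a single surjection. Conditions (1) and (2) in the statement are tailored to be precisely the sheaf conditions for covers of these two forms, so the ``only if'' direction is immediate; finite disjoint union preservation (including the unary product $F(\emptyset) \simeq \ast$) comes from the trivial covers, and the Čech descent condition for $ p $ is the sheaf condition for the one-element cover $ \{p\} $.

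For the ``if'' direction, let $ \{p_i \colon X_i \to Y\}_{i=1}^{n} $ be a finite jointly surjective family of maps in $ \Comp $. I would set $ X \colonequals \bigsqcup_{i=1}^{n} X_i $ with induced map $ p \colon X \to Y $, which is a surjection of compact Hausdorff spaces. The key observation is that $ \Comp $ is an extensive category: finite coproducts are disjoint and preserved by pullback. Hence for each $ m \geq 0 $ the $ (m+1) $-fold fibered power of $ X $ over $ Y $ decomposes as
\begin{equation*}
	X \cross_Y \cdots \cross_Y X \;\cong\; \bigsqcup_{(i_0, \ldots, i_m)} X_{i_0} \cross_Y \cdots \cross_Y X_{i_m} \period
\end{equation*}
Applying $ F $ and using (1) to convert these finite disjoint unions into products in $ \Dcal $, the augmented cosimplicial object obtained from the Čech nerve of the single map $ p $ is identified with the analogous diagram built from the Čech nerve of the original family $ \{p_i\} $. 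Then condition (2) applied to the surjection $ p $ displays $ F(Y) $ as the limit of that diagram, which is exactly the sheaf condition for the family $ \{p_i\} $.

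The main thing to verify carefully is that the identification of cosimplicial diagrams above is natural in the cosimplicial direction---that is, that the coface and codegeneracy maps on both sides match up after applying $ F $---so that one genuinely obtains the same limit. This is where extensivity of $ \Comp $ together with the naturality of condition (1) does the real work, and it is the only nontrivial bookkeeping in the argument. With this compatibility in hand, there is nothing further to check, and I would simply appeal to \SAG{Proposition}{A.3.3.1} for the fully formalized version.
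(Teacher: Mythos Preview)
Your proposal is correct and matches the paper's approach exactly: the paper offers no independent argument and simply cites \SAG{Proposition}{A.3.3.1}, which is precisely the extensivity-based reduction you sketch. Your write-up just makes explicit the content of that citation.
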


\begin{nul}
	Every representable presheaf on $ \Comp $ is a sheaf.
	Moreover, the topology on $ \Comp $ is designed exactly so that the Yoneda embedding $ \incto{\Comp}{\Sh(\Comp)} $ preserves finite coproducts and carries surjections to effective epimorphisms.
\end{nul}

\begin{notation}
	Write $ \Extr \subset \Comp $ for the full subcategory spanned by the \textit{extremally disconnected} profinite sets.
	The extremally disconnected profinte sets are exactly the projective objects of the category $ \Comp $ \cites{MR0121775}[Chapter III, \S3.7]{MR861951}.
\end{notation}

\begin{nul}\label{nul:extremally_disconnected_basis}
	Let $ K $ be a compact Hausdorff space.
	Write $ K^{\disc} $ for the underlying set of $ K $ equipped with the discrete topology.
	There is a natural surjection $ \surjto{\SC(K^{\disc})}{K} $ from the Stone--Čech compactification of $ K^{\disc} $ to $ K $.
	Since the profinite set $ \SC(K^{\disc}) $ is extremally disconnected, the subcategory $ \Extr \subset \Comp $ is a basis for the Grothendieck topology on $ \Comp $.
	Therefore, restriction of presheaves defines an equivalence of \categories
	\begin{equation*}
		\Shhyp(\Comp) \equivalence \Shhyp(\Extr)
	\end{equation*}
	with inverse given by right Kan extension \cites[Corollary A.8]{arXiv:2001.00319}[Corollary 3.12.14]{arXiv:1807.03281}.
\end{nul}

\begin{nul}\label{nul:ShhypComp_Postnikov_complete}
	Since every surjection of extremally disconnected profinite sets admits a section, a presheaf $ F $ on $ \Extr $ is a sheaf if and only if $ F $ preserves finite products.
	Moreover, by \cite[Lemma 2.4.10]{arXiv:1904.09966}, the \topos $ \Sh(\Extr) $ is Postnikov complete.
	Hence \smash{$ \Shhyp(\Comp) $} is also Postnikov complete.
\end{nul}

Most reasonable topological spaces embed into sheaves on $ \Comp $:

\begin{notation}
	Write $ \yo \colon \fromto{\Top}{\Sh(\Comp)} $ for the restricted Yoneda functor defined by
	\begin{equation*}
		\yo(X)(K) \colonequals \Map_{\Top}(K,X) \period
	\end{equation*}
	When it does not cause confusion, we also simply denote $ \yo(X) \in \Sh(\Comp) $ by $ X $.
\end{notation}

\begin{nul}
	The functor $ \yo $ is not fully faithful.
	However, $ \yo $ is fully faithful when restricted to the the full subcategory of \defn{compactly generated} topological spaces.
\end{nul}

\begin{nul}
	Note that the functor $ \yo \colon \fromto{\Top}{\Sh(\Comp)} $ preserves limits.
	The functor $ \yo $ does not preserve arbitrary colimits.
	However, in \cref{subsec:descent_for_open_covers} we show that $ \yo $ behaves well with open covers, coproducts, and proper surjections.
\end{nul}


\section{Proper descent}\label{sec:proper_descent}

Let $ \E $ be a compactly assembled \category (see \Cref{rec:compactly_assembled}).
In this section, we show that the functor sending a locally compact Hausdorff space $ X $ to the \category $ \Shpost(X;\E) $ of Postnikov complete sheaves on $ X $ satisfies descent for proper surjections in the following sense.

\begin{notation}
	Write $ \LCH \subset \Top $ for the full subcategory spanned by the locally compact Hausdorff spaces.
\end{notation}

\begin{definition}
	Let $ \Dcal $ be \acategory.
	We say that a functor $ F \colon \fromto{\LCHop}{\Dcal} $ \defn{satisfies proper descent} if for every proper surjection of locally compact Hausdorff spaces $ p \colon \surjto{X}{Y} $, the augmented cosimplicial diagram
	\begin{equation*}
		\begin{tikzcd}[sep=1.5em]
		    F(Y) \arrow[r, "\pupperstar"] & F(X) \arrow[r, shift left=0.75ex] \arrow[r, shift right=0.75ex] & F(X \cross_Y X) \arrow[l] \arrow[r] \arrow[r, shift left=1.5ex] \arrow[r, shift right=1.5ex] &  \cdots  \arrow[l, shift left=0.75ex] \arrow[l, shift right=0.75ex] 
		\end{tikzcd}
	\end{equation*}
	obtained by applying $ F $ to the Čech nerve of $ p $ exhibits $ F(Y) $ as the limit of its restriction to $ \DDelta \subset \Deltaplus $.
\end{definition}

\Cref{subsec:proper_descent} proves \Cref{intro_thm:general_descent} (see \Cref{cor:general_descent}).
In \cref{subsec:shape_theory} we record some shape-theoretic consequences.
In \cref{subsec:necesity_of_Postnikov_completion}, we explain why \Cref{intro_thm:general_descent} does not hold before Postnikov completion. 


\subsection{Proper descent for Postnikov sheaves}\label{subsec:proper_descent}

To prove that the functor $ \goesto{X}{\Shpost(X)} $ satisfies proper descent we apply the following criterion:

\begin{proposition}[\HA{Corollary}{4.7.5.3}]\label{prop:HA.4.7.5.3}
	Let $ \Scal^{\bullet} \colon \fromto{\Deltaplus}{\Catinfty} $ be an augmented cosimplicial \category.
	Let $ G \colon \fromto{\Scal^{-1}}{\Scal^0} $ denote the agumentation.
	Assume that:
	\begin{enumerate}[label=\stlabel{prop:HA.4.7.5.3}, ref=\arabic*]
		\item\label{prop:HA.4.7.5.3.1} The \category $ \Scal^{-1} $ admits totalizations of $ G $-split cosimplicial objects, and those totalizations are preserved by $ G $.

		\item\label{prop:HA.4.7.5.3.2} For every morphism $ \alpha \colon \fromto{[m]}{[n]} $ in $ \Deltaplus $, the square
		\begin{equation*}
			\begin{tikzcd}
				\Scal^m \arrow[r, "d^0"] \arrow[d, "\alphaupperstar"'] & \Scal^{m+1} \arrow[d, "{([0] \star \alpha)\upperstar}"] \\
				\Scal^n \arrow[r, "d^0"'] & \Scal^{n+1}
			\end{tikzcd}
		\end{equation*}
		is horizontally right adjointable.
		(Note that, in particular, this requires that the coface functors $ d^0 $ be left adjoints.)
	\end{enumerate}
	Then the natural functor \smash{$ \theta \colon \fromto{\Scal^{-1}}{\lim_{[n] \in \DDelta} \Scal^n} $} admits a fully faithful right adjoint.
	Moreover, if $ G $ is conservative, then $ \theta $ is an equivalence.
\end{proposition}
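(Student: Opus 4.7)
The plan is to prove this via the dual Barr--Beck--Lurie theorem, realizing the situation as a comonadic descent problem. Condition (2) is the Beck--Chevalley hypothesis needed to organize the right adjoints to the coface maps into a coherent diagram, while condition (1) supplies the ``$G$-split totalizations'' input that any Barr--Beck-style argument requires.

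\textbf{Step 1: right adjoints to the coface maps.} Specializing condition (2) to identity morphisms forces each $d^0 \colon \Scal^n \to \Scal^{n+1}$ to be a left adjoint; let $R^n$ denote its right adjoint. The Beck--Chevalley isomorphisms $\alphaupperstar R^m \simeq R^n ([0] \star \alpha)\upperstar$ furnished by condition (2) promote the pointwise right adjoints to a natural transformation of augmented cosimplicial $\infty$-categories from the shift $\Scal^{\bullet+1}$ back to $\Scal^{\bullet}$.

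\textbf{Step 2: construction of $\theta^{R}$.} Given $V^{\bullet} \in \lim_{[n] \in \DDelta}\Scal^{n}$, iterate the right adjoints of Step 1 to form a cosimplicial object $W^{\bullet}$ in $\Scal^{0}$ with $W^{n}$ built from $R^{0} R^{1} \cdots R^{n-1} V^{n}$; the coherence data of Step 1 promotes this to a genuine cosimplicial functor $W^{\bullet} \colon \DDelta \to \Scal^{0}$. The counits of the adjunctions $d^0 \dashv R^n$ assemble into an extra codegeneracy exhibiting $W^{\bullet}$ as $G$-split. By hypothesis (1) one can then form $\mathrm{Tot}(W^{\bullet})$ in $\Scal^{-1}$; set $\theta^{R}(V^{\bullet}) \colonequals \mathrm{Tot}(W^{\bullet})$.

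\textbf{Step 3: adjunction and full faithfulness.} Verify that $\theta^{R}$ is right adjoint to $\theta$ by producing unit and counit from the $d^0 \dashv R^n$ units/counits and the coherence of Step 1. The counit $\theta \theta^{R}(V^{\bullet}) \to V^{\bullet}$, checked levelwise, becomes the comparison map for a split (hence absolute) totalization, so it is an equivalence. This gives full faithfulness of $\theta^{R}$, proving the first half of the proposition. For the second half, assuming $G$ conservative, it suffices to show the unit $U \to \theta^{R}\theta(U)$ is an equivalence after applying $G$; but $G$ preserves the totalization used to define $\theta^{R}$ by condition (1), and after this application the map again becomes a split-totalization identity, so it is an equivalence. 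Conservativity of $G$ then finishes the proof.

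\textbf{Main obstacle.} The genuinely hard step is Step 2: promoting the pointwise right adjoints $R^{n}$ to a strictly $\infty$-functorial cosimplicial diagram and establishing the $G$-splitting coherently. The raw data is provided by condition (2), but turning Beck--Chevalley isomorphisms into an honest functor $\DDelta \to \Scal^{0}$ requires the full adjointability-for-diagrams machinery (passing between Cartesian and coCartesian fibrations, dualizing along the functor classifying $\Scal^{\bullet}$); this is precisely the content of \HA{\S}{4.7.4} and the bulk of the proof of \HA{Corollary}{4.7.5.3}.
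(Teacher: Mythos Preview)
The paper does not supply a proof of this proposition: it is quoted verbatim from Lurie's \textit{Higher Algebra} as \HA{Corollary}{4.7.5.3} and used as a black box in the proof of \Cref{lem:truncated_descent}. So there is no paper-side argument to compare against; your sketch is, in effect, an outline of Lurie's own proof via the comonadic Barr--Beck theorem together with the adjointability-of-diagrams machinery of \HA{\S}{4.7.4}, and as such it is reasonable.

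One small slip in Step 2: the cosimplicial object $W^{\bullet}$ must live in $\Scal^{-1}$, not in $\Scal^{0}$, since hypothesis (1) speaks of totalizations of $G$-split cosimplicial objects \emph{in} $\Scal^{-1}$. The formula should read $W^{n} = R^{-1} R^{0} \cdots R^{n-1}(V^{n}) \in \Scal^{-1}$ (one more right adjoint than you wrote), and the extra codegeneracy witnessing the splitting appears only after applying $G$ to land in $\Scal^{0}$. With that correction your description of Steps 2 and 3, and your identification of the coherence problem as the genuine technical burden handled by \HA{\S}{4.7.4}, are accurate.
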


We are interested in applying \Cref{prop:HA.4.7.5.3} in the case where $ \Scal^{\bullet} $ is obtained by applying sheaves to the Čech nerve of a proper surjection.


\begin{example}\label{ex:descent_hypothesis_2}
	Let $ \E $ be a presentable \category which is compactly generated or stable, and let $ p \colon \surjto{X}{Y} $ be a proper surjection of locally compact Hausdorff spaces.
	The Proper Basechange Theorem \cites[\HTTthm{Corollary}{7.3.1.18}]{HTT}[Subexample 3.15]{arXiv:2108.03545} implies that the augmented cosimplicial diagram
	\begin{equation}\label{diagam:Sh_of_Cech_nerve}
		\begin{tikzcd}[sep=1.5em]
		    \Sh(Y;\E) \arrow[r, "\pupperstar"] & \Sh(X;\E) \arrow[r, shift left=0.75ex] \arrow[r, shift right=0.75ex] & \Sh(X \cross_Y X;\E) \arrow[l] \arrow[r] \arrow[r, shift left=1.5ex] \arrow[r, shift right=1.5ex] &  \cdots  \arrow[l, shift left=0.75ex] \arrow[l, shift right=0.75ex] 
		\end{tikzcd}
	\end{equation}
	satisfies hypothesis \enumref{prop:HA.4.7.5.3}{2}.
\end{example}

\begin{observation}\label{ex:descent_hypothesis_1}
	In the setting of \Cref{ex:descent_hypothesis_2}, the functor $ \pupperstar \colon \fromto{\Sh(Y;\E)}{\Sh(X;\E)} $ is left exact.
	If there is an integer $ n \geq 0 $ such that $ \E $ is an $ n $-category, then the totalizations in \enumref{prop:HA.4.7.5.3}{1} can be computed by finite limits \cite[Proposition A.1]{arXiv:2207.09256}. 
	Hence, in this case, the diagram \eqref{diagam:Sh_of_Cech_nerve} also satisfies \enumref{prop:HA.4.7.5.3}{1}. 
\end{observation}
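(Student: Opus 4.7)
The statement has two parts and I would handle them in sequence.

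For the left exactness of $\pupperstar$, I would just observe that this is formal. The morphism $p \colon X \surjto Y$ induces a geometric morphism of \topoi, so the pullback $\pupperstar \colon \Sh(Y) \to \Sh(X)$ is left exact by definition. Since $\Sh(-;\E) \simeq \Sh(-) \tensor \E$ and tensoring with a presentable \category preserves left adjoints that are left exact (the induced functor is $\pupperstar \tensor \id_{\E}$, and finite limits in a tensor product are computed pointwise), the functor $\pupperstar \colon \Sh(Y;\E) \to \Sh(X;\E)$ is also left exact.

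For the second part, the plan is to show that under the $n$-categorical hypothesis on $ \E $, every cosimplicial object in $\Sh(Y;\E)$ (not merely the $\pupperstar$-split ones) has its totalization computed by a finite limit, and that this finite limit is preserved by any left exact functor. The key input is the following general fact, which is essentially \cite[Proposition A.1]{arXiv:2207.09256}: if $\Dcal$ is a presentable $m$-category for some finite $m$, then for any cosimplicial object $D^{\bullet} \colon \DDelta \to \Dcal$, the natural map $\lim_{\DDelta} D^{\bullet} \to \lim_{\DDelta_{\leq m+1}} D^{\bullet}|_{\DDelta_{\leq m+1}}$ is an equivalence, i.e.\ totalizations reduce to finite limits over the $(m{+}1)$-truncated subcategory of $\DDelta$.

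To apply this, I would note that if $\E$ is a presentable $n$-category, then $\Sh(Y;\E) \simeq \Sh(Y) \tensor \E$ is again a presentable $n$-category (by \HTT{Example}{4.8.1.22} or \cref{rmk:hypersheaves_in_an_n-category}). Consequently, every cosimplicial diagram in $\Sh(Y;\E)$ has totalization computed by the finite limit over $\DDelta_{\leq n+1}$, and this totalization exists because $\Sh(Y;\E)$ is presentable and hence admits all finite limits. Moreover, since $\pupperstar$ is left exact by the first part, it preserves these finite limits, hence preserves totalizations of cosimplicial objects—in particular, it preserves totalizations of $\pupperstar$-split cosimplicial objects. This verifies hypothesis \enumref{prop:HA.4.7.5.3}{1} for the diagram \eqref{diagam:Sh_of_Cech_nerve}.

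The main (mild) subtlety is invoking the reduction of totalizations to finite limits in an $n$-category correctly; once that is in hand, the rest is a formal consequence of left exactness and presentability. No obstacle is really expected here, since both ingredients are available off the shelf.
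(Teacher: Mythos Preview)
Your proposal is correct and follows exactly the reasoning implicit in the paper's observation (which is stated without a separate proof): left exactness of $\pupperstar$ is formal from the geometric morphism, and the citation to \cite[Proposition A.1]{arXiv:2207.09256} is precisely the reduction of totalizations in an $n$-category to finite limits, which a left exact functor then preserves. One small correction: the reference for the tensor product with an $n$-category is \HA{Example}{4.8.1.22}, not HTT.
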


\begin{lemma}\label{lem:truncated_descent}
	Let $ n \geq 0 $ be an integer.
	Then:
	\begin{enumerate}[label=\stlabel{lem:truncated_descent}, ref=\arabic*]
		\item\label{lem:truncated_descent.1} The functor $ \Sh(-)_{\leq n} \colon \fromto{\LCHop}{\Catinfty} $ satisfies proper descent.

		\item\label{lem:truncated_descent.2} The functor $ \Sh(-)_{\leq n} \colon \fromto{\Compop}{\Catinfty} $ is a hypersheaf.
	\end{enumerate}
\end{lemma}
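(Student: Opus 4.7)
The plan is to deduce part \enumref{lem:truncated_descent}{1} from \Cref{prop:HA.4.7.5.3}, applied to the augmented cosimplicial \category $ \Scal^{\bullet} $ obtained by evaluating $ \Sh(-)_{\leq n} $ on the Čech nerve of a proper surjection $ p \colon \surjto{X}{Y} $ of locally compact Hausdorff spaces. The identification $ \Sh(K)_{\leq n} \equivalent \Sh(K; \Spc_{\leq n}) $ lets us apply \Cref{ex:descent_hypothesis_2,ex:descent_hypothesis_1} with coefficients in the compactly generated \category $ \Spc_{\leq n} $: the former supplies hypothesis \enumref{prop:HA.4.7.5.3}{2} via the Proper Basechange Theorem, and the latter supplies hypothesis \enumref{prop:HA.4.7.5.3}{1}, because $ \Spc_{\leq n} $ is an $ (n+1) $-category, so the relevant totalizations reduce to finite limits and are preserved by the left-exact functor $ \pupperstar $. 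Conservativity of the augmentation $ \pupperstar \colon \fromto{\Sh(Y)_{\leq n}}{\Sh(X)_{\leq n}} $ is supplied by \Cref{obs:pullback_conservativity}. Thus \Cref{prop:HA.4.7.5.3} delivers the desired equivalence.

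For part \enumref{lem:truncated_descent}{2}, I would verify the two conditions of \Cref{obs:sheaf_condition}. Product preservation is immediate, since $ \Sh(K \sqcup L) \equivalent \Sh(K) \cross \Sh(L) $ and $ n $-truncation is compatible with products. Descent for surjections in $ \Comp $ is a special case of part \enumref{lem:truncated_descent}{1}, because every continuous map between compact Hausdorff spaces is automatically proper. Hence $ \Sh(-)_{\leq n} $ is a sheaf on the site $ \Comp $. To promote this to a hypersheaf, observe that $ \Sh(K)_{\leq n} $ is always an $ (n+1) $-category, so the functor lands in $ (\Catinfty)_{\leq n+1} \subset \Catinfty $; applying \Cref{rmk:hypersheaves_in_an_n-category} with $ \E = (\Catinfty)_{\leq n+1} $ gives $ \Sh(\Comp; \E) \equivalent \Shhyp(\Comp; \E) $, so every such sheaf is automatically hypercomplete.

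The step I expect to be the main obstacle is making the cosimplicial setup in part (1) fully rigorous---specifically, identifying $ \Sh(-)_{\leq n} $ with $ \Sh(-; \Spc_{\leq n}) $ coherently at every level of the Čech nerve and verifying that the Proper Basechange Theorem provides the required right adjointability uniformly across all coface squares. Once that bookkeeping is in place, the remainder is a mechanical invocation of the cited results, with the truncation assumption on $ n $ doing the real work twice: it bounds the relevant totalizations to finite limits in part (1), and it forces sheaves to be automatically hypercomplete in part (2).
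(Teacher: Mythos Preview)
Your proposal is correct and matches the paper's proof almost verbatim: part \enumref{lem:truncated_descent}{1} is exactly the combination of \Cref{ex:descent_hypothesis_2,ex:descent_hypothesis_1} with $\E = \Spc_{\leq n}$ together with \Cref{obs:pullback_conservativity}, and part \enumref{lem:truncated_descent}{2} is precisely the observation that $\Sh(-)_{\leq n}$ lands in the presentable $(n+2)$-category $\Cat_{n+1}$ (your ``$(\Catinfty)_{\leq n+1}$'' should be read as this), so \Cref{rmk:hypersheaves_in_an_n-category} reduces hypersheaf to sheaf, which then follows from \enumref{lem:truncated_descent}{1} and compatibility with coproducts. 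The coherence worry you flag is already absorbed into the statements of \Cref{ex:descent_hypothesis_2,ex:descent_hypothesis_1}, so there is no residual obstacle.
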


\begin{proof}
	For \enumref{lem:truncated_descent}{1}, note that since $ \Spc_{\leq n} $ is compactly generated, by \Cref{ex:descent_hypothesis_2,ex:descent_hypothesis_1} it suffices to check that for each proper surjection of locally compact Hausdorff spaces $ p \colon \surjto{X}{Y} $, the pullback functor $ \pupperstar \colon \fromto{\Sh(Y)_{\leq n}}{\Sh(X)_{\leq n}} $ is conservative. 
	This follows from the assumption that $ p $ is a surjection (\Cref{obs:pullback_conservativity}).

	Since $ \Sh(-)_{\leq n} $ factors through the full subcategory $ \Cat_{n+1} \subset \Catinfty $ spanned by the $ (n+1) $-cate\-gories, and $ \Cat_{n+1} $ is an $ (n+2) $-category, by \Cref{rmk:hypersheaves_in_an_n-category}, item \enumref{lem:truncated_descent}{2} is equivalent to the claim that $ \Sh(-)_{\leq n} $ is a sheaf.
	Thus \enumref{lem:truncated_descent}{2} follows from \enumref{lem:truncated_descent}{1} and the fact that the functor $ \Sh(-)_{\leq n} $ carries coproducts of topological spaces to products of \categories.
\end{proof}

\begin{corollary}\label{cor:general_descent}
	Let $ \E $ be a compactly assembled \category. 
	Then:
	\begin{enumerate}[label=\stlabel{cor:general_descent}, ref=\arabic*]
		\item\label{cor:general_descent.1} The functor $ \Shpost(-;\E) \colon \fromto{\LCHop}{\Catinfty} $ satisfies proper descent.

		\item\label{cor:general_descent.2} The functor $ \Shpost(-;\E) \colon \fromto{\Compop}{\Catinfty} $ is a hypersheaf.
	\end{enumerate}
\end{corollary}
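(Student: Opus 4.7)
The plan is to bootstrap from \Cref{lem:truncated_descent} to all Postnikov truncation levels via \Cref{lem:limits_and_Postnikov_completion}, and then to extend from $\Spc$-coefficients to compactly assembled coefficients by exploiting that a compactly assembled \category is dualizable in $\PrL$.

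For \enumref{cor:general_descent}{1}, fix a proper surjection $p \colon \surjto{X}{Y}$ of locally compact Hausdorff spaces, and write $X^\bullet$ for its Čech nerve. \enumref{lem:truncated_descent}{1} asserts that for every integer $n \geq 0$, the augmented cosimplicial diagram $\Sh(X^\bullet)_{\leq n}$ is a limit diagram in $\Catinfty$. \Cref{lem:limits_and_Postnikov_completion} then promotes this to an equivalence
$$
\Shpost(Y) \simeq \lim_{[m] \in \DDelta} \Shpost(X^m)
$$
in $\LToppost$, which by \Cref{rec:Postnikov_completion_preserves_limits_colimits} and \Cref{nul:limits_in_LTop_computed_in_Cat} is also an equivalence in $\Catinfty$, and hence in $\PrL$ since the forgetful functor $\PrL \to \Catinfty$ preserves limits. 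To extend to compactly assembled $\E$, I would invoke \Cref{rec:compactly_assembled}: such an $\E$ is a retract in $\PrL$ of a compactly generated \category and is therefore dualizable in $\PrL$, so the functor $-\otimes \E \colon \PrL \to \PrL$ is both a left and a right adjoint, and preserves all limits. Tensoring the displayed limit with $\E$ then yields
$$
\Shpost(Y;\E) \simeq \lim_{[m] \in \DDelta} \Shpost(X^m;\E)
$$
in $\PrL$, and hence in $\Catinfty$.

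For \enumref{cor:general_descent}{2}, by \Cref{obs:sheaf_condition} it suffices to show that $\Shpost(-;\E)$ preserves finite products and satisfies descent along single surjections in $\Comp$. Finite-product preservation is inherited from $\Sh(-)$ via the tensor product description, and descent along surjections of compact Hausdorff spaces is a special case of \enumref{cor:general_descent}{1}, since every continuous surjection of compact Hausdorff spaces is automatically proper. Thus $\Shpost(-;\E)$ is a sheaf on $\Comp$. To promote it to a hypersheaf, I would rewrite
$$
\Shpost(-;\E) \simeq \lim_n \bigl(\Sh(-)_{\leq n} \otimes \E\bigr),
$$
using the dualizability of $\E$ to pull the tensor past the defining Postnikov limit. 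Each $\Sh(-)_{\leq n}$ is a hypersheaf on $\Comp$ by \enumref{lem:truncated_descent}{2}, and tensoring a $\PrL$-valued hypersheaf with the dualizable object $\E$ again yields a hypersheaf, because $-\otimes\E$ preserves limits; a pointwise limit of hypersheaves is itself a hypersheaf.

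The main technical obstacle I anticipate is the careful bookkeeping between limits in $\LToppost$, $\PrL$, and $\Catinfty$, together with confirming that compact assembly implies dualizability in $\PrL$ and not only in $\PrLst$ (\Cref{rec:compactly_assembled} explicitly records only the stable case). Granting this, both assertions reduce mechanically to \Cref{lem:truncated_descent}.
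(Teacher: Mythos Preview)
Your overall strategy matches the paper's: reduce to $\E = \Spc$ via the tensor product, then combine \Cref{lem:truncated_descent} with \Cref{lem:limits_and_Postnikov_completion}. The one substantive difference is how the reduction to $\E = \Spc$ is justified.

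You argue that a compactly assembled $\E$ is dualizable in $\PrL$, hence $-\otimes\E$ preserves all limits. You correctly flag that \Cref{rec:compactly_assembled} only records dualizability in $\PrLst$, and indeed the step ``retract of compactly generated $\Rightarrow$ dualizable in $\PrL$'' is not immediate from anything in the paper. The paper sidesteps this entirely: it invokes \cite[Lemma 2.15]{arXiv:2108.03545}, which states that for compactly assembled $\E$ the functor $\E \otimes (-) \colon \PrL \to \PrL$ commutes with limits of diagrams whose transition functors are \emph{left exact}. This is precisely the situation here (pullback functors in the \v{C}ech nerve, and truncation functors in the Postnikov tower, are all left exact), so full dualizability is never needed. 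With this lemma in hand, both parts reduce to $\E = \Spc$ in one line.

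Your separate treatment of \enumref{cor:general_descent}{2} --- first verifying the sheaf condition via \Cref{obs:sheaf_condition} and then upgrading to a hypersheaf --- is correct but longer than necessary. Once you have reduced to $\E = \Spc$, \Cref{lem:limits_and_Postnikov_completion} applied to \enumref{lem:truncated_descent}{2} gives the hypersheaf property directly, with no need to pass through the sheaf condition first.
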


\begin{proof}
	Since $ \E $ is compactly assembled, the functor $ \E \tensor (-) \colon \fromto{\PrL}{\PrL} $ commutes with limits of diagrams where the transition functors are left exact \cite[Lemma 2.15]{arXiv:2108.03545}.
	Hence it suffices to prove the claims for $ \E = \Spc $.
	In this case, the claims follow from \Cref{lem:limits_and_Postnikov_completion,lem:truncated_descent}.
\end{proof}




\subsection{Consequences in shape theory}\label{subsec:shape_theory}

We now record some consequences of \Cref{cor:general_descent} regarding the shape of compact Hausdorff spaces.
The reader unfamiliar with the shape is encouraged to consult \cites[\HTTsubsec{7.1.6}]{HTT}[\HAsec{A.1}]{HA}[\SAGsec{E.2}]{SAG}[Chapter 4]{arXiv:1807.03281}.
We follow the notations of \cite[Chapter 4]{arXiv:1807.03281}.

Since the shape $ \Shape \colon \fromto{\RTop}{\Pro(\Spc)} $ is a left adjoint, \Cref{cor:general_descent} implies:

\begin{corollary}\label{cor:Shape_of_proper_surjection}
	Let $ p \colon \surjto{X}{Y} $ be a proper surjection of locally compact Hausdorff spaces.
	Then the natural map of prospaces
	\begin{equation*}
		\colim_{[n] \in \Deltaop} \Shape\Shpost(X^{\cross_{Y} n}) \to \Shape\Shpost(Y)
	\end{equation*}
	is an equivalence.
\end{corollary}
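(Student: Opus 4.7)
The plan is to directly leverage \Cref{cor:general_descent}. Applied with $ \E = \Spc $ (which is compactly generated, hence compactly assembled), that corollary tells us that the augmented cosimplicial diagram obtained from the Čech nerve of $ p $ exhibits $ \Shpost(Y) $ as the limit of the cosimplicial object $ \Shpost(X^{\cross_Y \bullet}) $ in $ \Catinfty $. By \cref{nul:limits_in_LTop_computed_in_Cat} together with \cref{rec:Postnikov_completion_preserves_limits_colimits} (which states that $ \LToppost \subset \LTop $ is closed under limits and that limits in $ \LToppost $ are computed in $ \Catinfty $), this is in fact a limit diagram in $ \LTop $.

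Next, I would pass from $ \LTop $ to $ \RTop $ under the equivalence $ \RTop \simeq \LTop^{\op} $: morphisms in $ \RTop $ are the pushforwards $ \flowerstar $ whose left adjoints $ \fupperstar $ are precisely the morphisms of $ \LTop $. Under this equivalence, the limit diagram in $ \LTop $ above becomes, after reversing all arrows, a colimit diagram in $ \RTop $ exhibiting $ \Shpost(Y) $ as $ \colim_{[n] \in \Deltaop} \Shpost(X^{\cross_Y n}) $. Applying the shape functor $ \Shape \colon \fromto{\RTop}{\Pro(\Spc)} $, which by hypothesis is a left adjoint and hence preserves colimits, then yields the claimed equivalence in $ \Pro(\Spc) $.

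Since the argument is essentially a direct translation of \Cref{cor:general_descent} through a left adjoint, I do not anticipate any substantive mathematical obstacle. The only bookkeeping concerns are (a) matching the Čech-nerve indexing convention in the displayed colimit and (b) keeping careful track of which topos category ($ \LTop $ versus $ \RTop $) is being used at each step, but both are entirely formal.
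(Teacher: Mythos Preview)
Your proposal is correct and matches the paper's approach exactly: the paper simply states that since the shape $\Shape \colon \RTop \to \Pro(\Spc)$ is a left adjoint, the result is an immediate consequence of \Cref{cor:general_descent}. You have merely spelled out the intermediate steps (limit in $\Catinfty$ $\Rightarrow$ limit in $\LTop$ $\Rightarrow$ colimit in $\RTop$ $\Rightarrow$ apply the left adjoint $\Shape$) that the paper leaves implicit.
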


\noindent Recall that the natural geometric morphism $ \fromto{\Shpost(X)}{\Sh(X)} $ induces an equivalences on protruncated shapes.
Combining \Cref{cor:Shape_of_proper_surjection} with work of Hoyois \cite{MR3763287}, we deduce that the protruncated shape preserves many (co)limits of compact Hausdorff spaces.

\begin{corollary}\label{cor:protrun_shapes_of_compacta}
	The protruncated shape $ \Shapeprotrun\Sh \colon \fromto{\Comp}{\Pro(\Spc_{<\infty})} $
	preserves:
	\begin{enumerate}[label=\stlabel{cor:protrun_shapes_of_compacta}, ref=\arabic*]
		\item\label{cor:protrun_shapes_of_compacta.1} Finite coproducts.

		\item\label{cor:protrun_shapes_of_compacta.2} Cofiltered limits and arbitrary products.

		\item\label{cor:protrun_shapes_of_compacta.3} Geometric realizations of Čech nerves of surjections.
	\end{enumerate}
\end{corollary}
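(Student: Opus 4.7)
The proof would rest on three ingredients: (a) the shape functor $\Shape \colon \RTop \to \Pro(\Spc)$ is a left adjoint, so the protruncated shape $\Shapeprotrun$ (obtained by further composing with the truncation localization $\Pro(\Spc) \to \Pro(\Spc_{<\infty})$) preserves all colimits in $\RTop$; (b) the natural geometric morphism $\Shpost(K) \to \Sh(K)$ induces an equivalence on protruncated shapes, as recalled in the paragraph just before the corollary statement; and (c) Hoyois's continuity theorem for protruncated shapes of cofiltered limits of compact Hausdorff spaces \cite{MR3763287}. A useful elementary observation throughout is that any surjection between compact Hausdorff spaces is automatically proper, so \Cref{cor:Shape_of_proper_surjection} applies to every surjection in $\Comp$.

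With these in hand, parts~(1) and~(3) are essentially immediate. For~(3), apply $\Shapeprotrun$ to the colimit formula of \Cref{cor:Shape_of_proper_surjection}, and rewrite both sides using the equivalence $\Shapeprotrun\Shpost \simeq \Shapeprotrun\Sh$ supplied by~(b); the colimit on the left is preserved by $\Shapeprotrun$ by~(a). For~(1), note that $\Sh \colon \Comp \to \RTop$ sends a finite disjoint union $K_1 \sqcup K_2$ to the corresponding coproduct of topoi in $\RTop$ (equivalently, the product $\Sh(K_1) \times \Sh(K_2)$ in $\LTop$), and the left-adjoint character of $\Shapeprotrun$ in~(a) then forces it to preserve this coproduct.

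The substantive content is part~(2). Cofiltered limits are exactly Hoyois's continuity theorem~(c). For arbitrary products, I would express $\prod_{i \in I} K_i$ as the cofiltered limit, over the filtered poset of finite subsets $S \subset I$, of the finite sub-products $\prod_{i \in S} K_i$, which reduces the problem to the cofiltered-limit case together with preservation of finite products.

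The main obstacle I expect is the finite-product case: no elementary Künneth argument identifies $\Shapeprotrun\Sh(K_1 \times K_2)$ with the product $\Shapeprotrun\Sh(K_1) \times \Shapeprotrun\Sh(K_2)$ in general. My preferred approach would be to invoke the corresponding finite-product compatibility from Hoyois's paper if one is available there; failing that, one can attempt to deduce it from part~(3) applied to a carefully chosen proper surjection (for instance, a surjection from a profinite cover of $K_1 \times K_2$), exploiting that the projections $K_1 \times K_2 \to K_i$ are proper maps and that proper descent for $\Shapeprotrun\Sh$ has already been established in part~(3).
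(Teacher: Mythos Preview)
Your treatment of parts~(1) and~(3) is correct and matches the paper's proof exactly: for~(1) the paper observes that $\Sh \colon \Comp \to \RTop$ preserves finite coproducts and that $\Shapeprotrun$ is a left adjoint; for~(3) it simply invokes \Cref{cor:Shape_of_proper_surjection} (together with the identification $\Shapeprotrun\Shpost \simeq \Shapeprotrun\Sh$ you noted).

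For part~(2) you are overcomplicating matters. The paper's entire proof of~(2) is the single citation ``Item~(2) is \cite[Example 2.12]{MR3763287}.'' Hoyois's result there covers \emph{both} cofiltered limits \emph{and} arbitrary products of compact Hausdorff spaces at once; there is no need to factor an infinite product as a cofiltered limit of finite products and then hunt for a separate K\"unneth-type statement for binary products. Your first instinct---to look in Hoyois's paper for the product compatibility---is therefore exactly right, and the speculative fallback via proper descent applied to a profinite cover of $K_1 \times K_2$ is unnecessary (and, as you suspected, not obviously straightforward to carry out).
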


\begin{proof}
	For \enumref{cor:protrun_shapes_of_compacta}{1}, note that the functor $ \Sh \colon \fromto{\Comp}{\RTop} $ preserves finite coproducts and the protruncated shape
	\begin{equation*}
		\Shapeprotrun \colon \fromto{\RTop}{\Pro(\Spc_{<\infty})}
	\end{equation*}
	is a left adjoint.
	Item \enumref{cor:protrun_shapes_of_compacta}{2} is \cite[Example 2.12]{MR3763287}, and \Cref{cor:Shape_of_proper_surjection} implies \enumref{cor:protrun_shapes_of_compacta}{3}.
\end{proof}

\begin{warning}
	The protruncated shape does \textit{not} preserve pullbacks of compact Hausdorff spaces.
	To see this, let $ \Sph{1} $ denote the topological circle, and choose a point $ t \in \Sph{1} $.
	The shapes of $ \Sh(\{t\}) $ and $ \Sh(\Sph{1}) $ are the point and the homotopy type $ \BZZ $, respectively (see \cites[\HTTthm{Corollary}{7.2.1.12}, \HTTthm{Theorem}{7.2.3.6} \& \HTTthm{Remark}{7.2.4.18}]{HTT}[Corollary 3.5]{arXiv:2010.06473}).
	The pullback of topological spaces $ \{t\} \cross_{\Sph{1}} \{t\} $ is a point, hence
	\begin{equation*}
		\Pi_{\infty}\Sh(\{t\} \cross_{\Sph{1}} \{t\}) \equivalent \ast \period
	\end{equation*}
	On the other hand,
	\begin{equation*}
		\Pi_{\infty}\Sh(\{t\}) \crosslimits_{\Pi_{\infty}\Sh(\Sph{1})} \Pi_{\infty}\Sh(\{t\}) \equivalent \ast \crosslimits_{\BZZ} \ast \equivalent \ZZ \period
	\end{equation*}
\end{warning}


\subsection{The necessity of Postnikov completion}\label{subsec:necesity_of_Postnikov_completion}

We conclude this section by explaining why Postnikov completion is necessary in the statement of \Cref{cor:general_descent}.
To do this, we give an example of a compact Hausdorff space $ K $ for which \smash{$ \Shhyp(K) $} is not Postnikov complete.%
We begin with some preliminary observations.

\begin{nul}
	Let $ \X $ be \atopos.
	For each $ m \geq 1 $, write $ \Kup_{\X}(\ZZ,m) $ for the constant object of $ \X $ at the Eilenberg--MacLane space $ \Kup(\ZZ,m) $.
	Note that since $ \Kup(\ZZ,m) $ is $ m $-truncated, $ \Kup_{\X}(\ZZ,m) $ is a hypercomplete object of $ \X $.
	Moreover, since the hypercomplete objects of $ \X $ are closed under limits, the object $ \prod_{m \geq 1} \Kup_{\X}(\ZZ,m) $ is also a hypercomplete object of $ \X $.
\end{nul}

\begin{observation}\label{obs:connected_product}
	If the \topos $ \X $ is Postnikov complete, then 
	\begin{equation*}
		\uptau_{\leq n}\Biggl(\prod_{m \geq 1} \Kup_{\X}(\ZZ,m) \Biggr) \equivalent \prod_{m = 1}^{n} \Kup_{\X}(\ZZ,m) \period
	\end{equation*}
	In particular, $ \uptau_{\leq 0}\paren{\prod_{m \geq 1} \Kup_{\X}(\ZZ,m)} $ is a terminal object of $ \X $, so that $ \prod_{m \geq 1} \Kup_{\X}(\ZZ,m) $ is connected.
\end{observation}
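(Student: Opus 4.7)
The plan is to exhibit the Postnikov tower of $P \colonequals \prod_{m \geq 1}\Kup_{\X}(\ZZ,m)$ explicitly. I would set $W_k \colonequals \prod_{m=1}^{k}\Kup_{\X}(\ZZ,m)$ for $k \geq 0$ (so $W_0$ is terminal), with transition maps $W_{k+1} \to W_k$ given by projection, and assemble these into a candidate object $\{W_k\}_{k \geq 0}$ of $\Xpost$.

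To verify that $\{W_k\}$ really lies in $\Xpost$, I would use two facts about the constant sheaf functor $c^{\ast}\colon \Spc \to \X$ (pullback along the terminal geometric morphism): it is a left exact left adjoint, so it preserves both truncatedness and connectivity. Hence $\Kup_{\X}(\ZZ,m)$ is $m$-truncated and $(m-1)$-connected. It follows that each $W_k$ is $k$-truncated, being a finite product of at-most-$k$-truncated objects. Moreover, truncation $\uptau_{\leq k}$ in any topos is a left exact localization and therefore preserves finite products, so
\begin{equation*}
    \uptau_{\leq k}W_{k+1} \equivalent \prod_{m=1}^{k+1}\uptau_{\leq k}\Kup_{\X}(\ZZ,m) \equivalent \prod_{m=1}^{k}\Kup_{\X}(\ZZ,m) \equivalent W_k,
\end{equation*}
using $\uptau_{\leq k}\Kup_{\X}(\ZZ,k+1) \equivalent \ast$ from the connectivity computation.

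To conclude, I would note that $\lim_{k} W_{k} \equivalent P$ (interchanging the limit with the infinite product indexed by $m$). Since $\X$ is Postnikov complete, the functor $\tlowerstar\colon \Xpost \to \X$ is an equivalence with inverse $\tupperstar$, and $\tupperstar$ always sends an object to its Postnikov tower (cf.\ \cref{nul:description_of_Postnikov_completion_adjunction}). Hence
\begin{equation*}
    \{W_k\} \equivalent \tupperstar\tlowerstar\{W_k\} \equivalent \tupperstar P \equivalent \{\uptau_{\leq k}P\}_{k \geq 0},
\end{equation*}
which gives $W_n \equivalent \uptau_{\leq n}P$. The ``in particular'' claim is the case $n = 0$, where $W_0$ is the empty product, hence terminal. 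The only non-formal step is the identity $\uptau_{\leq k}W_{k+1} \equivalent W_k$, and I expect this — the interplay between truncation and (finite vs.\ infinite) products — to be the main point where care is needed; everything else is direct manipulation of the definition of Postnikov completion.
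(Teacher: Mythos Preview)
The paper states this as an observation without proof, so your argument is filling in the expected details; the approach via exhibiting $\{W_k\}_{k\geq 0}$ as an object of $\Xpost$ with $\tlowerstar\{W_k\}\equivalent P$ is exactly the intended one.

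One point needs correcting: truncation $\trun_{\leq k}$ in an $\infty$-topos is \emph{not} a left exact localization. For instance, in $\Spc$ the functor $\trun_{\leq 0}=\uppi_0$ does not preserve the pullback $\ast\times_{\Sph{1}}\ast\equivalent\ZZ$. What you actually use, and what is true, is that $\trun_{\leq k}$ preserves \emph{finite products}: this holds because finite products of $(k{+}1)$-connective maps are $(k{+}1)$-connective in any $\infty$-topos (effective epimorphisms are stable under pullback and composition, hence under finite products, and one inducts on the diagonal). With that correction the identification of the projection $W_{k+1}\to W_k$ with the truncation map, the computation $\lim_k W_k\equivalent P$, and the conclusion via $\tupperstar\tlowerstar\equivalent\id$ all go through as you wrote.
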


\begin{observation}\label{obs:connectedness_cohomology}
	Let $ X $ be a topological space and write $ F \colonequals \prod_{m \geq 1} \Kup_{\Sh(X)}(\ZZ,m) $.
	Then 
	\begin{equation*}
		\uppi_0 \Gamma(X;F) \equivalent \prod_{m \geq 1} \Hsheaf^m(X;\ZZ) \period
	\end{equation*}
	Note that if the sheaf $ F $ is connected, then any element $ (e_m)_{m \geq 1} \in \prod_{m \geq 1} \Hsheaf^m(X;\ZZ) $ \textit{vanishes locally} in the following sense: any point $ x \in X $ has an open neighborhood $ U \subset X $ such that for each $ m \geq 1 $, the class $ \restrict{e_m}{U} \in \Hsheaf^m(U;\ZZ) $ is zero.
\end{observation}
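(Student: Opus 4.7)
My plan is to treat the two assertions in the statement separately: first the identification $\uppi_0 \Gamma(X; F) \equivalent \prod_{m \geq 1} \Hsheaf^m(X; \ZZ)$, then the local-vanishing consequence when $F$ is connected. Both should be essentially formal once the standard descriptions of $0$-truncation, constant sheaves, and sheaf cohomology are in hand, and I do not anticipate a genuine technical obstacle.

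For the first assertion, the plan is to combine two product-preservation facts. The global sections functor $\Gamma(X; -) \colon \Sh(X) \to \Spc$ is a right adjoint (to the constant sheaf functor), hence preserves arbitrary products; and $\uppi_0$ preserves products of spaces. Together these give
\begin{equation*}
\uppi_0 \Gamma(X; F) \equivalent \prod_{m \geq 1} \uppi_0 \Gamma(X; \Kup_{\Sh(X)}(\ZZ, m)).
\end{equation*}
I would then identify each factor with $\Hsheaf^m(X; \ZZ)$ using the standard homotopy-theoretic description of sheaf cohomology as homotopy classes of maps from the terminal sheaf into a constant Eilenberg--MacLane object.

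For the second assertion, the plan is to unpack what connectedness means in the topos $\Sh(X)$. By definition, $F$ being connected says that $\uptau_{\leq 0} F$ is a terminal object, and in any sheaf topos the $0$-truncation $\uptau_{\leq 0} F$ is the sheafification of the presheaf $U \mapsto \uppi_0 F(U)$. Terminality of $\uptau_{\leq 0} F$ therefore says that every section of this presheaf becomes trivial after restriction along some open cover. Applied to the global section $s \in \uppi_0 \Gamma(X; F)$ corresponding to $(e_m)_{m \geq 1}$, this furnishes, for each point $x \in X$, an open neighborhood $U$ on which $s|_U$ is trivial in $\uppi_0 F(U)$. A second application of the first assertion, now with $U$ in place of $X$, identifies $\uppi_0 F(U) \equivalent \prod_{m \geq 1} \Hsheaf^m(U; \ZZ)$ and translates triviality of $s|_U$ into $e_m|_U = 0$ for every $m \geq 1$, which is the desired local vanishing. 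The only mildly subtle point — and certainly not an obstacle — is checking that restricting a product of constant sheaves from $X$ to an open $U \subset X$ is computed termwise, so that the first assertion really applies over $U$.
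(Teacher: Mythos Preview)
The paper states this as an observation without proof; your two-part argument is correct and supplies exactly the standard justification the reader is meant to fill in. The only point you flag as ``mildly subtle'' is indeed harmless: restriction to an open $j \colon U \hookrightarrow X$ admits a left adjoint $j_!$, hence preserves products, and it carries constant sheaves to constant sheaves.
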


\begin{example}\label{ex:not_all_hypersheaves_are_Postnikov_complete}
	Let \smash{$ X \colonequals \prod_{m \geq 1} \Sph{m} $} be the product of positive-dimensional spheres.
	We claim that the \topos \smash{$ \Shhyp(X) $} is not Postnikov complete.
	In light of \Cref{obs:connected_product}, to see this it suffices to show that the hypersheaf $ \prod_{m \geq 1} \Kup_{\Sh(X)}(\ZZ,m) $ is not connected.
	\Cref{obs:connectedness_cohomology} shows that it suffices to construct cohomology classes \smash{$ e_i \in \Hsheaf^i(X;\ZZ) $} that do not simultaneously vanish on any nonempty open of $ X $.
	For this, let $ e_i $ be the pullback of the generator of
	\begin{equation*}
		\Hsheaf^i(\Sph{i};\ZZ) \isomorphic \ZZ
	\end{equation*}
	under the projection $ \pr_i \colon \fromto{\prod_{m \geq 1} \Sph{m}}{\Sph{i}} $.
\end{example}

\begin{example}\label{ex:not_all_hypersheaves_are_limits_of_their_Postnikov_towers}
	Not only is the \topos of hypersheaves on $ X = \prod_{m \geq 1} \Sph{m} $ not Postnikov complete, but there are also objects which are not limits of their Postnikov towers.
	To see this, consider the filtered colimit
	\begin{equation*}
		G \colonequals \colim\,\Biggl(\!\!
		\begin{tikzcd}[sep=1.5em]
			\Kup_{\Sh(X)}(\ZZ,1) \arrow[r] & \cdots \arrow[r] & \displaystyle\prod_{m = 1}^{n} \Kup_{\Sh(X)}(\ZZ,m) \arrow[r] & \displaystyle\prod_{m = 1}^{n+1} \Kup_{\Sh(X)}(\ZZ,m) \arrow[r] & \cdots
		\end{tikzcd}
		\!\!\Biggr)
	\end{equation*}
	formed in $ \Shhyp(X) $.
	Here, the transition map
	\begin{equation*}
		\prod_{m = 1}^{n} \Kup_{\Sh(X)}(\ZZ,m) \to \Kup_{\Sh(X)}(\ZZ,n+1) \cross \prod_{m = 1}^{n} \Kup_{\Sh(X)}(\ZZ,m)
	\end{equation*}
	is the constant map at the basepoint of $ \Kup_{\Sh(X)}(\ZZ,n+1) $ on the first factor and the identity on the second factor.
	Since connected objects are closed under filtered colimits, $ G $ is connected.
	The functor \smash{$ \trun_{\leq n} \colon \fromto{\Shhyp(X)}{\Shhyp(X)} $} preserves filtered colimits and finite products, hence
	\begin{align*}
		\trun_{\leq n} G &\equivalent \prod_{m = 1}^{n} \Kup_{\Sh(X)}(\ZZ,m) \period \\
	\intertext{As a consequence, we have}
		\lim_{n \geq 0} \trun_{\leq n} G &\equivalent \prod_{m \geq 1} \Kup_{\Sh(X)}(\ZZ,m) \period
	\end{align*}
	\Cref{ex:not_all_hypersheaves_are_Postnikov_complete} shows that the limit $ \lim_{n \geq 0} \trun_{\leq n} G $ is not connected.
	In particular, the natural map $ \fromto{G}{\lim_{n \geq 0} \trun_{\leq n} G} $ is not an equivalence.
\end{example}

\Cref{ex:not_all_hypersheaves_are_Postnikov_complete} lets us see that the presheaves $ \Sh, \Shhyp \colon \fromto{\Compop}{\Catinfty} $ are not sheaves.

\begin{lemma}
	Let $ \Scal \colon \fromto{\Compop}{\LTop} $ be a sheaf and assume that the restriction of $ \Scal $ to profinite sets factors through \smash{$ \LToppost $}.
	Then for each compact Hausdorff space $ K $, the \topos $ \Scal(K) $ is Postnikov complete.
\end{lemma}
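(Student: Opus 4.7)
The strategy is to reduce to the profinite case using a surjection from a profinite set, then invoke closure of Postnikov complete \topoi under limits in $ \LTop $.

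First, given $ K \in \Comp $, we choose any surjection $ p \colon \surjto{P}{K} $ from a profinite set; for concreteness one can take $ P \colonequals \SC(K^{\disc}) $, the Stone--Čech compactification of the underlying discrete set of $ K $, as in \cref{nul:extremally_disconnected_basis}. The key observation is that every term of the Čech nerve of $ p $ is again profinite: the $ n $-th term $ P^{\cross_K (n+1)} $ is a closed subspace of the profinite set $ P^{n+1} $, and closed subspaces of profinite sets are profinite (since compactness, Hausdorffness, and total disconnectedness are all inherited).

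Next, we apply the sheaf condition for $ \Scal $ to the surjection $ p $ (see \Cref{obs:sheaf_condition}): the augmented cosimplicial diagram
\begin{equation*}
	\begin{tikzcd}[sep=1.5em]
		\Scal(K) \arrow[r] & \Scal(P) \arrow[r, shift left=0.75ex] \arrow[r, shift right=0.75ex] & \Scal(P \cross_K P) \arrow[l] \arrow[r] \arrow[r, shift left=1.5ex] \arrow[r, shift right=1.5ex] & \cdots \arrow[l, shift left=0.75ex] \arrow[l, shift right=0.75ex]
	\end{tikzcd}
\end{equation*}
exhibits $ \Scal(K) $ as the limit in $ \LTop $ of the restriction to $ \DDelta $.

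Finally, by the hypothesis that $ \Scal $ restricted to profinite sets factors through $ \LToppost $, each \topos $ \Scal(P^{\cross_K (n+1)}) $ appearing in this limit is Postnikov complete. Since the full subcategory $ \LToppost \subset \LTop $ is closed under limits (\cref{rec:Postnikov_completion_preserves_limits_colimits}), the limit $ \Scal(K) $ is itself Postnikov complete, as desired.

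The proof is essentially formal once one secures the profiniteness of the Čech-nerve terms, so the only real content lies in that verification together with pointing to the correct closure property of $ \LToppost $; no obstacle beyond bookkeeping is expected.
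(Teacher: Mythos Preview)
Your proof is correct and follows exactly the approach the paper takes: write $\Scal(K)$ as the limit over the Čech nerve of a surjection from a profinite set and invoke the closure of $\LToppost \subset \LTop$ under limits. The paper's proof is a one-sentence compression of your argument; your version usefully makes explicit the point (left tacit in the paper) that each fiber power $P^{\cross_K (n+1)}$ is again profinite.
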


\begin{proof}
	Since every compact Hausdorff space admits a surjection from a profinite set, this follows from the fact that the subcategory \smash{$ \LToppost \subset \LTop $} is closed under limits \Cref{rec:Postnikov_completion_preserves_limits_colimits}. 
\end{proof}

\noindent Since the functors $ \Sh $ and $ \Shpost $ agree on profinite sets (\Cref{ex:Sh_on_profinset}), \Cref{ex:not_all_hypersheaves_are_Postnikov_complete} shows:

\begin{corollary}\label{cor:Sh_Shhyp_not_sheaves}
	The presheaves $ \Sh, \Shhyp \colon \fromto{\Compop}{\LTop} $ are \textit{not} sheaves.
\end{corollary}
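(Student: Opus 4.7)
The plan is a short argument by contradiction combining the preceding lemma with Example \ref{ex:not_all_hypersheaves_are_Postnikov_complete}. Suppose that one of $\Sh$ or $\Shhyp$, regarded as a functor $\Scal \colon \fromto{\Compop}{\LTop}$, were a sheaf. To invoke the lemma I would first check that the restriction of $\Scal$ to the full subcategory of profinite sets factors through $\LToppost$. This is immediate from Example \ref{ex:Sh_on_profinset}: for every profinite set $S$, the \topos $\Sh(S)$ is Postnikov complete, hence in particular hypercomplete, so $\Shhyp(S) = \Sh(S)$. Thus both candidate functors satisfy the hypothesis of the lemma.

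Applying the preceding lemma, $\Scal(K)$ would then be Postnikov complete for every compact Hausdorff space $K$. To derive a contradiction I would specialize to $X \colonequals \prod_{m \geq 1} \Sph{m}$, which is compact Hausdorff by Tychonoff's theorem, and whose space of hypersheaves is shown not to be Postnikov complete in Example \ref{ex:not_all_hypersheaves_are_Postnikov_complete}. This directly contradicts the conclusion in the case $\Scal = \Shhyp$. For the case $\Scal = \Sh$, I would use the fact that Postnikov completeness implies hypercompleteness (recorded right after Observation \ref{obs:pullback_to_Postnikov_completion_fully_faithful}): if $\Sh(X)$ were Postnikov complete, then $\Sh(X)$ would equal $\Shhyp(X)$, making $\Shhyp(X)$ Postnikov complete and again contradicting the example.

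The entire corollary is therefore a formal consequence of the preceding lemma, the coincidence of the sheaf and Postnikov-complete sheaf theories on profinite sets, and the single pathological example $\prod_{m \geq 1} \Sph{m}$. There is no obstacle beyond having the example in hand; that example is where all the content lives.
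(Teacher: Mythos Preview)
Your argument is correct and follows exactly the approach the paper takes: apply the preceding lemma (using \Cref{ex:Sh_on_profinset} to verify its hypothesis) and then invoke \Cref{ex:not_all_hypersheaves_are_Postnikov_complete} at $X = \prod_{m \geq 1} \Sph{m}$ to obtain the contradiction. The paper compresses this into a single sentence, but the content is identical, including your observation that the $\Sh$ case reduces to the $\Shhyp$ case via the implication ``Postnikov complete $\Rightarrow$ hypercomplete.''
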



\section{The comparison functor}\label{sec:cohomology_comparison}

Let $ X $ be a topological space.
In this section, we construct a comparison geometric morphism
\begin{equation*}
	\cupperstar_{X} \colon \fromto{\Sh(X)}{\Sh(\Comp)_{/X}} \period
\end{equation*}
The idea is to define $ \cupperstar_X $ by left Kan extending the functor $ \yo \colon \fromto{\Open(X)}{\Sh(\Comp)_{/X}} $ to presheaves on $ X $ along the Yoneda embedding.
A priori, the right adjoint to this left Kan extension functor only lands in presheaves on $ X $.
To see that it factors through sheaves on $ X $ amounts to showing that the \topos $ \Sh(\Comp) $ has descent for open covers of topological spaces.
We prove this in \cref{subsec:descent_for_open_covers}.
\Cref{subsec:the_comparison_functor} constructs the comparison functor and gives an alternative description when $ X $ is a profinite set.
In \cref{subsec:naturality_of_the_comparison_functor}, we explain two naturality properties of the comparison functor.


\subsection{Descent for open covers}\label{subsec:descent_for_open_covers}

\begin{lemma}\label{lem:open_cover_effective_epi}
	Let $ X $ be a topological space and $ \{U_i\}_{i \in I} $ an open cover of $ X $.
	Then the natural map $ \fromto{\coprod_{i \in I} \yo(U_i)}{\yo(X)} $ is an effective epimorphism in $ \Sh(\Comp) $. 
\end{lemma}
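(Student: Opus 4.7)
\medskip

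\noindent\textbf{Proof plan.} The plan is to verify the effective epimorphism condition directly on test objects in $\Comp$. Recall that a morphism $F \to G$ in $\Sh(\Comp)$ is an effective epimorphism if and only if its $0$-truncation is an epimorphism of sheaves of sets, which in turn amounts to the following lifting property: for every compact Hausdorff space $K$ and every morphism $K \to G$ from a representable, there exists a covering family $\{K_j \to K\}_{j=1}^n$ in $\Comp$ (i.e.\ a finite jointly surjective family) such that each composite $K_j \to K \to G$ lifts through $F$. Thus it suffices to show: for every continuous map $\phi\colon K \to X$ out of a compact Hausdorff space $K$, there is a finite jointly surjective family $\{K_j \to K\}_j$ in $\Comp$ and, for each $j$, an index $i_j \in I$ such that $\phi|_{K_j}\colon K_j \to X$ factors through $U_{i_j}$.

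First I would pull back the cover: since $\phi$ is continuous, $\{\phi^{-1}(U_i)\}_{i\in I}$ is an open cover of the compact space $K$, so by compactness we may extract a finite subcover $\phi^{-1}(U_{i_1}), \ldots, \phi^{-1}(U_{i_n})$. Next, I would invoke the \emph{shrinking lemma} for normal spaces (compact Hausdorff spaces are normal): there exist closed subsets $C_j \subset \phi^{-1}(U_{i_j})$ for $1 \leq j \leq n$ with $K = \bigcup_{j=1}^n C_j$. Each $C_j$, being a closed subset of the compact Hausdorff space $K$, is itself compact Hausdorff, and the inclusions $C_j \hookrightarrow K$ form a finite jointly surjective family, hence a covering in $\Comp$. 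By construction $\phi(C_j) \subset U_{i_j}$, so the composite $C_j \hookrightarrow K \xrightarrow{\phi} X$ factors through $U_{i_j}$, producing the required lift $\yo(C_j) \to \yo(U_{i_j}) \to \coprod_{i \in I} \yo(U_i)$.

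The main work is essentially the shrinking lemma together with compactness; no higher-categorical subtleties are needed because we are only checking an effective epimorphism in the underlying $1$-topos of discrete sheaves on $\Comp$. The only thing to be careful about is aligning the characterization of effective epimorphisms in $\Sh(\Comp)$ with the explicit description of the topology (finite jointly surjective families), which is exactly what forces us to pass from the open subsets $\phi^{-1}(U_{i_j})$ (which are locally compact but not compact) to the closed shrinkings $C_j$ (which are compact). I do not expect any serious obstacle beyond this input from point-set topology.
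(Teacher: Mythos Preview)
Your argument is correct and essentially identical to the paper's: both use compactness of the test object $K$ to extract a finite subcover of $\{\phi^{-1}(U_i)\}$ and then the shrinking lemma to produce a finite closed (hence compact) refinement that lifts to the $U_i$'s. The only cosmetic difference is that the paper first treats the case where $X$ itself is compact Hausdorff and then reduces the general case to it by computing the pullback $\yo(K)\times_{\yo(X)}\coprod_i\yo(U_i)\simeq\coprod_i\yo(\phi^{-1}(U_i))$, whereas you fold both steps into a single direct verification of the local lifting criterion.
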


\begin{proof}
	First we prove the claim under the assumption that $ X $ is compact Hausdorff.
	In this case, since $ X $ is compact, there exists a finite subset $ I_0 \subset I $ such that $ \{U_i\}_{i \in I_0} $ covers $ X $.
	Since $ X $ is compact Hausdorff, there exists a cover of $ X $ by closed subsets $ \{Z_i\}_{i \in I_0} $ such that $ Z_i \subset U_i $.
	Since $ \yo \colon \incto{\Comp}{\Sh(\Comp)} $ preserves finite coproducts and carries surjections to effective epimorphisms, the composite map
	\begin{equation*}
		\begin{tikzcd}
			\displaystyle\yo\Biggl(\coprod_{i \in I_0} Z_i \Biggr) \equivalent \coprod_{i \in I_0} \yo(Z_i) \arrow[r, hooked] & \displaystyle\coprod_{i \in I} \yo(U_i) \arrow[r] & \yo(X)
		\end{tikzcd}
	\end{equation*}
	is an effective epimorphism.
	Hence the second map $ \fromto{\coprod_{i \in I} \yo(U_i)}{\yo(X)} $ is also an effective epimorphism.

	Now we prove the claim in general.
	We need to show that for each compact Hausdorff space $ K $ and map $ \ftilde \colon \fromto{\yo(K)}{\yo(X)} $ in $ \Sh(\Comp) $, the induced map 
	\begin{equation*}
		\yo(K) \crosslimits_{\yo(X)} \coprod_{i \in I} \yo(U_i) \to \yo(K)
	\end{equation*}
	is an effective epimorphism.
	Since $ K $ is compact Hausdorff, by the Yoneda lemma the map $ \ftilde \colon \fromto{\yo(K)}{\yo(X)} $ is induced by a map of topological spaces $ f \colon \fromto{K}{X} $.
	Since coproducts in $ \Sh(\Comp) $ are universal and $ \yo \colon \fromto{\Top}{\Sh(\Comp)} $ preserves limits, we see that
	\begin{align*}
		\yo(K) \crosslimits_{\yo(X)} \coprod_{i \in I} \yo(U_i) &\equivalent \coprod_{i \in I} \yo(K) \crosslimits_{\yo(X)} \yo(U_i) \\ 
		&\equivalent \coprod_{i \in I} \yo(f^{-1}(U_i)) \period
	\end{align*}
	Since $ \{f^{-1}(U_i)\}_{i \in I} $ is an open cover of $ K $, the claim now follows from the compact case.
\end{proof}

\begin{corollary}\label{cor:descent_intersections}
	Let $ X $ be a topological space and let $ \{U_i\}_{i \in I} $ be an open cover of $ X $.
	Then the induced augmented simplicial object
	\begin{equation*}
		\begin{tikzcd}[sep=1.5em]
		    \cdots \arrow[r] \arrow[r, shift left=1.5ex] \arrow[r, shift right=1.5ex] & \displaystyle \coprod_{i_1, i_2 \in I} \yo(U_{i_1} \intersect U_{i_2}) \arrow[l, shift left=0.75ex] \arrow[l, shift right=0.75ex] \arrow[r, shift left=0.75ex] \arrow[r, shift right=0.75ex] & \coprod_{i \in I} \yo(U_i) \arrow[r] \arrow[l] & \yo(X)
		\end{tikzcd}
	\end{equation*}
	exhibits $ \yo(X) $ as the geometric realization of its restriction to $ \Deltaop \subset \Deltaplusop $.
\end{corollary}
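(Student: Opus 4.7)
The plan is to deduce this from the previous lemma plus the standard characterization of effective epimorphisms in an $\infty$-topos. Specifically, in any $\infty$-topos, a map $p\colon E \to B$ is an effective epimorphism if and only if the augmentation $\check{C}(p)_{\bullet} \to B$ of the Čech nerve exhibits $B$ as the geometric realization of $\check{C}(p)|_{\Deltaop}$ (see, e.g., \HTT{Corollary}{6.2.3.5} or \HTT{Proposition}{7.2.1.14}). Thus, once we know from \Cref{lem:open_cover_effective_epi} that the map $p\colon \coprod_{i \in I} \yo(U_i) \to \yo(X)$ is an effective epimorphism in $\Sh(\Comp)$, the Čech nerve of $p$ automatically has the target as its geometric realization.

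The remaining task is to identify the terms of the Čech nerve with the stated coproducts. The $n$-th term of $\check{C}(p)$ is the $(n+1)$-fold fiber power
\begin{equation*}
	\Biggl(\coprod_{i \in I} \yo(U_i)\Biggr)^{\crosslimits_{\yo(X)} (n+1)} \period
\end{equation*}
Since coproducts in the $\infty$-topos $\Sh(\Comp)$ are universal, this fiber power distributes over the coproducts to yield
\begin{equation*}
	\coprod_{(i_0,\dots,i_n) \in I^{n+1}} \yo(U_{i_0}) \crosslimits_{\yo(X)} \cdots \crosslimits_{\yo(X)} \yo(U_{i_n}) \period
\end{equation*}
Now use the fact, recorded before this subsection, that $\yo \colon \fromto{\Top}{\Sh(\Comp)}$ preserves limits. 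Consequently each iterated fiber product $\yo(U_{i_0}) \crosslimits_{\yo(X)} \cdots \crosslimits_{\yo(X)} \yo(U_{i_n})$ is identified with $\yo(U_{i_0} \intersect \cdots \intersect U_{i_n})$, giving precisely the simplicial object in the statement.

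Neither step is difficult; the only thing to be mildly careful about is matching face/degeneracy maps in the Čech nerve with the maps induced by intersection inclusions, but this is automatic from the fact that both sides arise by applying the limit-preserving functor $\yo$ to the standard Čech nerve of the open cover in $\Top$. The substantive content has all been absorbed into \Cref{lem:open_cover_effective_epi}.
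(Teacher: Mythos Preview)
Your argument is correct and is essentially the same as the paper's: both invoke \Cref{lem:open_cover_effective_epi} to know the map is an effective epimorphism (hence its Čech nerve realizes to the target), and then identify the Čech-nerve terms by distributing the fiber product over the coproduct and using that $\yo$ preserves limits. The paper compresses this into a two-line computation, but the content is identical.
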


\begin{proof}
	This follows from the fact that $ \fromto{\coprod_{i \in I} \yo(U_i)}{\yo(X)} $ is an effective epimorphism and the computation 
	\begin{align*}
		\Biggl( \coprod_{i \in I} \yo(U_i) \Biggr) \crosslimits_{\yo(X)} \cdots \crosslimits_{\yo(X)} \Biggl( \coprod_{i \in I} \yo(U_i) \Biggr) &\equivalent \coprod_{i_1,\ldots,i_n \in I} \yo(U_{i_1}) \crosslimits_{\yo(X)} \cdots \crosslimits_{\yo(X)} \yo(U_{i_n}) \\ 
		&\equivalent \coprod_{i_1,\ldots,i_n \in I} \yo(U_{i_1} \intersect \cdots \intersect U_{i_n}) \period \qedhere
	\end{align*}
\end{proof}

The usual cofinality argument shows: 

\begin{corollary}\label{cor:descent_covering_sieves}
	Let $ X $ be a topological space and let $ \Ucal $ be a covering sieve of $ X $.
	Then the natural map
	\begin{equation*}
		\fromto{\colim_{U \in \Ucal} \yo(U)}{\yo(X)}
	\end{equation*}
	is an equivalence in $ \Sh(\Comp) $.
\end{corollary}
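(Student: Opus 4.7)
The plan is to reduce to \Cref{cor:descent_intersections} via a cofinality argument. Choose a covering family $\{U_i\}_{i \in I}$ of $X$ contained in $\Ucal$. Since $\Ucal$ is a sieve, every finite intersection $U_{i_0} \intersect \cdots \intersect U_{i_n}$ also lies in $\Ucal$, and the Čech diagram of $\{U_i\}_{i \in I}$ factors through $\Ucal$. By \Cref{cor:descent_intersections} its colimit is $\yo(X)$, so it suffices to identify $\colim_{V \in \Ucal} \yo(V)$ with this Čech colimit.

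The key move is to refine each element of $\Ucal$ along the covering family. For every $V \in \Ucal$, the family $\{V \intersect U_i\}_{i \in I}$ is an open cover of $V$ whose finite intersections again lie in $\Ucal$. Applying \Cref{cor:descent_intersections} to this cover expresses $\yo(V)$ as a Čech colimit; commuting this Čech colimit with the colimit over $\Ucal$ (colimits commute with colimits) reduces the problem to showing, for each tuple $(i_0,\ldots,i_n)$ with intersection $W \colonequals U_{i_0} \intersect \cdots \intersect U_{i_n}$, that
\begin{equation*}
	\colim_{V \in \Ucal} \yo(V \intersect W) \equivalent \yo(W) \period
\end{equation*}

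For this, I would introduce the functor $r \colon \fromto{\Ucal}{\Ucal_{/W}}$ defined by $r(V) \colonequals V \intersect W$ (well defined because $\Ucal$ is a sieve) and argue that $r$ is cofinal. For any $V' \in \Ucal_{/W}$, the slice $\Ucal \crosslimits_{\Ucal_{/W}} (\Ucal_{/W})_{V'/}$ unwinds to the poset $\{V \in \Ucal : V' \subset V \intersect W\}$; since $V' \subset W$, this simplifies to $\{V \in \Ucal : V' \subset V\}$, which admits $V'$ as an initial object and is therefore weakly contractible. Cofinality of $r$, together with the fact that $W$ is a terminal object of $\Ucal_{/W}$, identifies both sides of the displayed equivalence with $\yo(W)$.

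I expect the main conceptual step to be isolating the cofinal functor $r$; after that, the slice simplification using $V' \subset W$ is a one-line verification, and the remaining manipulations (applying \Cref{cor:descent_intersections} pointwise and commuting colimits) are formal.
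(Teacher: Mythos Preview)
Your argument is correct and is precisely the kind of cofinality reduction to \Cref{cor:descent_intersections} that the paper alludes to with the phrase ``the usual cofinality argument shows'' (the paper gives no further details). Your route via refining each $\yo(V)$ along the cover $\{U_i\}$, commuting colimits, and then invoking cofinality of $r \colon \Ucal \to \Ucal_{/W}$ is a clean way to carry this out, and in particular sidesteps the issue that the naive functor from the Čech index category to $\Ucal$ is \emph{not} directly cofinal (since for a general $V \in \Ucal$ there need not exist any $i$ with $V \subset U_i$).
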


Since the methods of proof are similar, we conclude this subsection showing that the functor $ \yo \colon \fromto{\Top}{\Sh(\Comp)} $ carries proper surjections to effective epimorphisms and preserves coproducts.
These results are not used in the rest of the paper.

\begin{lemma}
	Let $ p \colon \surjto{X}{Y} $ be a proper surjection of locally compact Hausdorff spaces.
	Then $ \yo(p) \colon \fromto{\yo(X)}{\yo(Y)} $ is an effective epimorphism in $ \Sh(\Comp) $.
\end{lemma}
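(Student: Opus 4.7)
The plan is to imitate the proof of \Cref{lem:open_cover_effective_epi}, reducing to the defining property that $ \yo \colon \Comp \to \Sh(\Comp) $ carries surjections of compact Hausdorff spaces to effective epimorphisms. Since effective epimorphisms in the \topos $ \Sh(\Comp) $ are stable under pullback and can be detected after pullback along maps from representables, it suffices to show that for every compact Hausdorff space $ K $ and every map $ \ftilde \colon \yo(K) \to \yo(Y) $, the induced map $ \yo(K) \cross_{\yo(Y)} \yo(X) \to \yo(K) $ is an effective epimorphism. By the Yoneda lemma, $ \ftilde $ corresponds to a continuous map $ f \colon K \to Y $, and since $ \yo \colon \Top \to \Sh(\Comp) $ preserves limits, there is an equivalence
\[
\yo(K) \cross_{\yo(Y)} \yo(X) \equivalent \yo(K \cross_Y X).
\]

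The next step is to check that the base change $ q \colon K \cross_Y X \to K $ of $ p $ along $ f $ is a surjection in the category $ \Comp $. Surjectivity is inherited under base change, and properness is stable under arbitrary base change, so $ q $ is proper. Since $ K $ is compact and $ K \cross_Y X = q^{-1}(K) $, properness of $ q $ forces $ K \cross_Y X $ to be compact; it is Hausdorff because it sits inside $ K \cross X $ as a closed subspace. Hence $ q $ is a surjection in $ \Comp $, and $ \yo(q) $ is an effective epimorphism by the choice of topology on $ \Comp $, which finishes the proof.

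The only substantive input is the geometric fact that the pullback of a proper surjection of locally compact Hausdorff spaces along a map from a compact Hausdorff space is itself a surjection in $ \Comp $; this is also the only place that the properness hypothesis enters. Everything else is formal manipulation using the fact that $ \yo $ preserves limits and that effective epimorphisms are local on the target.
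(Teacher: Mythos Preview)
Your argument is correct and matches the paper's proof essentially line for line: reduce to maps from representables via the Yoneda lemma, identify the pullback as $\yo(K \cross_Y X)$ using that $\yo$ preserves limits, observe that properness of $p$ forces $K \cross_Y X$ to be compact (and Hausdorff), and conclude from the fact that $\yo$ carries surjections in $\Comp$ to effective epimorphisms. Your closing paragraph isolating the one non-formal input is a nice addition.
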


\begin{proof}
	Since $ \yo $ preserves pullbacks and is fully faithful on locally compact Hausdorff spaces, we need to show that for every compact Hausdorff space $ K $ and map $ \fromto{K}{Y} $, the induced map
	\begin{equation*}
		\fromto{\yo(K \cross_Y X)}{\yo(K)}
	\end{equation*}
	is an effective epimorphism.
	Since $ p $ is a proper surjection, the pullback $ \pbar \colon \fromto{K \cross_Y X}{K} $ is also a proper surjection. 
	Since $ K $ is compact and $ \pbar $ is proper, $ K \cross_Y X $ is also compact.
	The fact that $ \yo \colon \incto{\Comp}{\Sh(\Comp)} $ carries surjections to effective epimorphisms completes the proof.
\end{proof}

\begin{lemma}
	The functor $ \yo \colon \fromto{\Top}{\Sh(\Comp)} $ preserves coproducts. 
\end{lemma}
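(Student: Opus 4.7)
The plan is to prove that the canonical comparison map
\[ \phi \colon \coprod_{i \in I} \yo(X_i) \to \yo\Biggl(\coprod_{i \in I} X_i\Biggr) \]
in $ \Sh(\Comp) $ is an equivalence by showing it induces a bijection on sections at every compact Hausdorff space $ K $. The key input is a classical compactness observation: any continuous map $ f \colon K \to \coprod_{i \in I} X_i $ yields the clopen partition $ K = \bigsqcup_{i \in I} f^{-1}(X_i) $, only finitely many of whose pieces $ K_{i_1}, \ldots, K_{i_n} $ are nonempty because $ K $ is compact. Hence $ f $ is uniquely encoded by the finite clopen decomposition $ K = K_{i_1} \sqcup \cdots \sqcup K_{i_n} $ together with continuous maps $ K_{i_j} \to X_{i_j} $, and this decomposition is a finite jointly surjective family in $ \Comp $, hence a cover for the Grothendieck topology.

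First I would use this observation to verify that $ \phi $ is an effective epimorphism. Given a section $ f \in \yo(\coprod_i X_i)(K) $, restrict to each clopen piece $ K_{i_j} $: the restriction factors through $ \yo(X_{i_j}) $, hence lifts through the coproduct inclusion $ \yo(X_{i_j}) \to \coprod_i \yo(X_i) $. These lifts assemble over the cover $ \{K_{i_j} \to K\}_j $ into a matching family witnessing local surjectivity of $ \phi $.

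For the remaining direction, I would compute $ \coprod_i \yo(X_i)(K) $ by unwinding the sheafification of the presheaf coproduct $ K' \mapsto \coprod_i \Map(K', X_i) $. A section is represented by a matching family on some cover of $ K $, modulo common refinement. Because every cover of $ K $ in $ \Comp $ can be refined by a finite clopen partition---clopen partitions are themselves covers in $ \Comp $, and any cover may be pulled back along the pieces of such a partition---the matching family normalizes to a finite clopen decomposition $ K = K_1 \sqcup \cdots \sqcup K_n $ equipped with maps $ g_j \colon K_j \to X_{i_j} $. The compatibility conditions on double intersections are vacuous since the $ K_j $ are pairwise disjoint, so this description matches that of $ \Map_{\Top}(K, \coprod_i X_i) $ term by term, which gives a sectionwise bijection.

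The main obstacle is precisely this sheafification normalization: rigorously checking that arbitrary matching families over arbitrary covers in $ \Comp $ reduce, up to the equivalence relation generated by refinement, to ones indexed by clopen partitions. An alternative, possibly more efficient route would be to bypass this bookkeeping by verifying that $ \phi $ is a monomorphism directly, using the disjointness and universality of coproducts in the topos $ \Sh(\Comp) $; combined with the effective epimorphism statement above, this would complete the proof more formally.
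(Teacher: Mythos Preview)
Your primary approach contains a genuine error: the claim that ``every cover of $ K $ in $ \Comp $ can be refined by a finite clopen partition'' is false.  Covers in $ \Comp $ are finite jointly surjective families of maps, and for a connected compact Hausdorff space such as $ K = [0,1] $ the only clopen partition is the trivial one, yet $ \{[0,\tfrac12]\hookrightarrow K,\ [\tfrac12,1]\hookrightarrow K\} $ is a nontrivial cover.  So the sheafification bookkeeping cannot be normalized the way you propose, and the sectionwise computation of $ \coprod_i \yo(X_i)(K) $ does not go through as written.

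Your alternative route, however, is sound.  Using that $ \yo $ preserves limits, one computes
\[
\yo(X_i)\crosslimits_{\yo(\coprod_j X_j)}\yo(X_k)\;\equivalent\;\yo\bigl(X_i\crosslimits_{\coprod_j X_j}X_k\bigr)
\]
which is $ \yo(X_i) $ when $ i=k $ and $ \yo(\emptyset) $ (the initial object) otherwise; universality of coproducts in the topos then shows the diagonal of $ \phi $ is an equivalence, so $ \phi $ is a monomorphism.  Combined with your (correct) effective-epimorphism argument, this finishes the proof.

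The paper takes a closely related but more direct path: rather than decomposing $ \phi $ into mono~$+$~effective epi, it checks that the pullback of $ \phi $ along every map $ \yo(K)\to\yo(\coprod_j X_j) $ is an equivalence.  Using universality of coproducts and that $ \yo $ preserves pullbacks, this pullback is identified with $ \coprod_{i\in I_0}\yo(f^{-1}(X_i))\to\yo(K) $ for the finite set $ I_0 $ of indices with $ f^{-1}(X_i)\neq\emptyset $; since each $ f^{-1}(X_i) $ is clopen (hence compact Hausdorff) and $ \yo|_{\Comp} $ preserves finite coproducts, this map is an equivalence.  The advantage of this packaging is that it avoids any direct sheafification computation and uses only the same structural inputs your alternative uses.
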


\begin{proof}
	Let $ \{X_i\}_{i \in I} $ be a collection of topological spaces. 
	We need to show that for any compact Hausdorff space $ K $ and map $ f \colon \fromto{K}{\coprod_{i \in I} X_i} $, the induced map
	\begin{equation}\label{eq:yo_coproduct}
		\yo(K) \crosslimits_{\yo(\coprod_{j \in I} X_j)} \coprod_{i \in I} \yo(X_i) \to \yo(K)
	\end{equation}
	is an equivalence.
	Since $ \yo $ preserves pullbacks and coproducts in $ \Sh(\Comp) $ are universal, we see that
	\begin{align*}
		\yo(K) \crosslimits_{\yo(\coprod_{j \in I} X_j)} \coprod_{i \in I} \yo(X_i) &\equivalent \coprod_{i \in I} \yo(K) \crosslimits_{\yo(\coprod_{j \in I} X_j)} \yo(X_i) \\ 
		&\equivalent \coprod_{i \in I} \yo\Biggl( K \crosslimits_{\coprod_{j \in I} X_j} X_i \Biggr) \\ 
		&= \coprod_{i \in I} \yo(f^{-1}(X_i)) \period
	\end{align*}
	Since $ K $ is compact, there is a finite subset $ I_0 \subset I $ such that $ f^{-1}(X_i) \neq \emptyset $ if and only if $ i \in I_0 $.
	Hence
	\begin{align*}
		\yo(K) \crosslimits_{\yo(\coprod_{j \in I} X_j)} \equivalent \coprod_{i \in I_{0}} \yo(f^{-1}(X_i)) 
	\end{align*}
	and the map $ \fromto{\coprod_{i \in I_{0}} \yo(f^{-1}(X_i))}{\yo(K)} $ is induced by the inclusions $ \incto{f^{-1}(X_i)}{K} $.
	Note that
	\begin{equation*}
		K \isomorphic \coprod_{i \in I_{0}} f^{-1}(X_i) \period
	\end{equation*}
	Moreover, each $ f^{-1}(X_i) $ is clopen in $ K $, hence compact Hausdorff.
	Thus the fact that the Yoneda embedding $ \yo \colon \incto{\Comp}{\Sh(\Comp)} $ preserves finite coproducts shows that the map \eqref{eq:yo_coproduct} is an equivalence.
\end{proof}


\subsection{The comparison functor}\label{subsec:the_comparison_functor}

In this subsection, we define the comparison functor and give an alternative description of this functor in the case of a profinite set.

\begin{definition}
	Let $ X $ be a topological space.
	Write 
	\begin{equation*}
		\cupperstar_X \colon \fromto{\PSh(X)}{\Sh(\Comp)_{/X}} 
	\end{equation*}
	for the left Kan extension of the functor
	\begin{equation*}
		\yo \colon \fromto{\Open(X)}{\Sh(\Comp)_{/X}}
	\end{equation*}
	along the Yoneda embedding $ \incto{\Open(X)}{\PSh(X)} $.
\end{definition}

\begin{nul}
	Since the functor $ \yo \colon \fromto{\Open(X)}{\Sh(\Comp)_{/X}} $ is left exact, by \HTT{Proposition}{6.1.5.2}, the functor $ \cupperstar_{X} $ is also left exact.
\end{nul}

\begin{observation}
	The functor $ \cupperstar_X $ has a right adjoint
	\begin{equation*}
		c_{X,\ast} \colon \fromto{\Sh(\Comp)_{/X}}{\PSh(X)}
	\end{equation*}
	given by the formula
	\begin{equation*}
		c_{X,\ast}(G)(U) \colonequals \Map_{\Sh(\Comp)_{/X}}(\yo(U),G) \period
	\end{equation*}
	By \Cref{cor:descent_covering_sieves}, we see that for each $ G \in \Sh(\Comp)_{/X} $, the presheaf $ c_{X,\ast}(G) $ is a sheaf.
	Hence the adjunction $ \cupperstar_X \leftadjoint c_{X,\ast} $ restricts to a geometric morphism
	\begin{equation*}
		\begin{tikzcd}
			\Sh(X) \arrow[r, "\cupperstar_{X}", shift left] & \Sh(\Comp)_{/X} \period \arrow[l, "c_{X,\ast}", shift left]
		\end{tikzcd}
	\end{equation*}
\end{observation}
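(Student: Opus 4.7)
The plan is to verify three things in sequence: that the claimed formula describes the right adjoint to $ \cupperstar_X $, that this right adjoint lands in sheaves, and that the resulting adjunction restricts to a geometric morphism between $ \Sh(X) $ and $ \Sh(\Comp)_{/X} $.

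First, for the right-adjoint formula: by construction, $ \cupperstar_X $ is the left Kan extension of $ \yo \colon \fromto{\Open(X)}{\Sh(\Comp)_{/X}} $ along the Yoneda embedding $ \incto{\Open(X)}{\PSh(X)} $. The universal property of presheaves then identifies the right adjoint of $ \cupperstar_X $ as the functor sending $ G \in \Sh(\Comp)_{/X} $ to the presheaf $ U \mapsto \Map_{\Sh(\Comp)_{/X}}(\yo(U), G) $. The adjunction identity for an arbitrary $ F \in \PSh(X) $ then follows by writing $ F $ as a colimit of representable presheaves and pulling that colimit out of the first slot of $ \Map $.

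Second, to show $ c_{X,\ast}(G) $ is a sheaf, I would check the sheaf condition on covering sieves directly. Given a covering sieve $ \Ucal $ on an open $ U \subset X $, the canonical comparison map
\[
    \fromto{c_{X,\ast}(G)(U)}{\lim_{V \in \Ucal} c_{X,\ast}(G)(V)}
\]
unwinds, using the formula from the first step, to the map
\[
    \fromto{\Map_{\Sh(\Comp)_{/X}}(\yo(U), G)}{\Map_{\Sh(\Comp)_{/X}}\bigl(\colim_{V \in \Ucal} \yo(V),\, G\bigr)}
\]
that precomposes with $ \fromto{\colim_{V \in \Ucal} \yo(V)}{\yo(U)} $. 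This latter map is an equivalence in $ \Sh(\Comp) $ by \Cref{cor:descent_covering_sieves} applied to the topological space $ U $, and since colimits in the slice $ \Sh(\Comp)_{/X} $ are computed in $ \Sh(\Comp) $, the equivalence persists in the slice.

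Third, since $ c_{X,\ast} \colon \fromto{\Sh(\Comp)_{/X}}{\PSh(X)} $ now factors through the reflective subcategory $ \Sh(X) \subset \PSh(X) $, a standard argument produces an adjunction $ \cupperstar_X|_{\Sh(X)} \leftadjoint c_{X,\ast} $ between $ \Sh(X) $ and $ \Sh(\Comp)_{/X} $ (composition with sheafification is redundant on this image). The left exactness of $ \cupperstar_X $ already noted restricts to the subcategory $ \Sh(X) $, so the pair is a geometric morphism. The main obstacle in this plan is the sheaf verification in the second step, but that is directly supplied by the descent result already in hand; everything else is formal.
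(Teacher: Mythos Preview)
Your proposal is correct and follows exactly the reasoning the paper intends: the paper treats this as an observation rather than a theorem, with the argument embedded in the statement itself (the right adjoint formula is the standard one for a left Kan extension along Yoneda, and the sheaf condition is checked via \Cref{cor:descent_covering_sieves}). You have simply spelled out the details that the paper leaves implicit, and there is nothing to correct.
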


\begin{remark}
	If $ X $ is not compactly generated, then the functor
	\begin{equation*}
		\yo \colon \fromto{\Open(X)}{\Sh(\Comp;\Set)_{/X}}
	\end{equation*}
	need not be fully faithful. 
	Hence, the functor $ \cupperstar_X \colon \fromto{\Sh(X;\Set)}{\Sh(\Comp;\Set)_{/X}} $ need not be fully faithful.
\end{remark}

In the remainder of this subsection, we give an alternative description of the comparison geometric morphism when $ X $ is a profinite set.
To do so, we use the following description of $ \Sh(\Comp)_{/X} $ as sheaves on a site.

\begin{recollection}\label{rec:slice_topos}
	Let $ F \in \Sh(\Comp) $ be a sheaf of sets.
	Write \smash{$ \Comp_{/F} \subset \Sh(\Comp)_{/F} $} for the full subcategory spanned by morphisms from representable sheaves.
	Give $ \Comp_{/F} $ the finest Grothendieck toplogy that contains the sieves that become covering sieves after applying the forgetful functor \period{$ \fromto{\Comp_{/F}}{\Comp} $}.
	With respect to this topology, the natural functor
	\begin{equation*}
		\fromto{\Sh(\Comp_{/F})}{\Sh(\Comp)_{/F}}
	\end{equation*}
	is an equivalence of \categories \cite[Exposé III, \S5, Proposition 5.4]{MR50:7130}.
\end{recollection}

\begin{nul}
	Let $ X $ be a topological space.
	To simplify notation for sheaves with coefficients, we implicitly make use of the identification
	\begin{equation*}
		\Sh(\CompX) \equivalent \Sh(\Comp)_{/X}
	\end{equation*}
	provided by \Cref{rec:slice_topos} repeatedly throughout the rest of these notes.
\end{nul} 

\begin{recollection}\label{obs:Clop_basis_for_profinite}
	Let $ S $ be a profinite set, and write \smash{$ \Clop(S) \subset \Open(S) $} for the poset of clopen subsets of $ S  $.
	Then $ \Clop(S) $ is a basis closed under finite intersection.
	Hence for any presentable \category $ \E $, restriction along the inclusion $ \Clop(S)^{\op} \subset \Open(S)^{\op} $ defines an equivalence of \categories
	\begin{equation*}
		\Sh(S;\E) \equivalence \Sh(\Clop(S);\E) \period
	\end{equation*} 
	The inverse equivalence is given by right Kan extension \cites[Corollary A.8]{arXiv:2001.00319}[Corollary 3.12.14]{arXiv:1807.03281}.
\end{recollection}

\begin{nul}
	For a profinite set $ S $, the inclusion $ \Clop(S) \subset \CompS $ is a morphism of sites.
\end{nul}

\begin{observation}\label{obs:cupperstar_induce_by_morphism_of_sites_for_profinite_sets}
	Let $ S $ be a profinite set and $ \E $ a presentable \category.
	The composite
	\begin{equation*}
		\begin{tikzcd}[sep=3em]
			\Sh(\CompS) \equivalent \Sh(\Comp)_{/S} \arrow[r, "c_{S,\ast}"] & \Sh(S;\E) \arrow[r, "\sim"{yshift=-0.25em}, "\textup{restrict}"'] & \Sh(\Clop(S);\E) 
		\end{tikzcd}
	\end{equation*}
	carries a sheaf \smash{$ G \colon \fromto{(\CompS)^{\op}}{\Spc} $} to its restriction to $ \Clop(S)^{\op} \subset (\Comp_{/S})^{\op} $.
	As a consequence, the geometric morphism
	\begin{equation*}
		\begin{tikzcd}
			\Sh(S) \arrow[r, "\cupperstar_{S}", shift left] & \Sh(\CompS) \period \arrow[l, "c_{S,\ast}", shift left]
		\end{tikzcd}
	\end{equation*}
	is induced by the inclusion of sites $ \Clop(S) \subset \CompS $.
\end{observation}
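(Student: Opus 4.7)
The plan is to verify the object-level formula by a Yoneda computation on representables, then deduce the identification of the full geometric morphism by uniqueness of adjoints, and finally extend to $\E$-valued coefficients by tensoring in $\PrL$.

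First I would unwind $c_{S,\ast}$ on a clopen $U \subset S$. By definition $c_{S,\ast}(G)(U) \colonequals \Map_{\Sh(\Comp)_{/S}}(\yo(U), G)$. Since $U$ is closed in the compact Hausdorff space $S$, $U$ is itself compact Hausdorff, so the open inclusion $j \colon U \hookrightarrow S$ defines an object of $\CompS$ and $\yo(U) \in \Sh(\Comp)$ is the standard representable on $U \in \Comp$. Under the equivalence $\Sh(\Comp)_{/S} \equivalent \Sh(\CompS)$ of \Cref{rec:slice_topos}, representables correspond to representables, so $(\yo(U) \to S)$ matches the representable on $j \in \CompS$. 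The Yoneda lemma in $\Sh(\CompS)$ then gives $c_{S,\ast}(G)(U) \equivalent G(j)$; that is, the restriction of $c_{S,\ast}(G)$ to $\Clop(S)$ coincides with the restriction of $G$, viewed on $\CompS$, along $\Clop(S) \subset \CompS$.

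For the consequence, I would verify that the inclusion $\Clop(S) \subset \CompS$ is a morphism of sites: a cover of a clopen $V \subset S$ by clopens admits a finite subcover by compactness of $V$, and finite jointly surjective families are covers in $\CompS$. The induced geometric morphism $\Sh(\Clop(S)) \to \Sh(\CompS)$ has pushforward equal to restriction along this inclusion. Using the equivalence $\Sh(S) \equivalent \Sh(\Clop(S))$ of \Cref{obs:Clop_basis_for_profinite}, this restriction matches the composite in the statement, which the previous paragraph identified with $c_{S,\ast}$. Uniqueness of adjoints then identifies the inverse image of this geometric morphism with $\cupperstar_S$.

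For a general presentable $\E$, everything is obtained by tensoring with $\E$ in $\PrL$: the equivalence $\Sh(S;\E) \equivalent \Sh(\Clop(S);\E)$ of \Cref{obs:Clop_basis_for_profinite} is $\PrL$-linear, the $\E$-valued $c_{S,\ast}$ is the right adjoint of $\cupperstar_S \tensor \E$, and restriction along a morphism of sites is $\PrL$-linear. I don't anticipate any real obstacle; the only delicate point is tracking that the equivalences relating $\Sh(\Comp)_{/S}$, $\Sh(\CompS)$, $\Sh(S)$, and $\Sh(\Clop(S))$ all identify the relevant representable sheaves on the nose.
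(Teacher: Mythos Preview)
Your proposal is correct and matches the paper's intent: the statement is labeled an \emph{observation} and is given no proof in the paper, because it unwinds directly from the definitions via Yoneda on representables, exactly as you do. The paper also records as a separate unproven remark that $\Clop(S) \subset \CompS$ is a morphism of sites, which you verify explicitly; beyond that, your argument supplies precisely the details the paper leaves implicit.
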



\subsection{Naturality of the comparison functor}\label{subsec:naturality_of_the_comparison_functor}

\begin{lemma}\label{lem:c_natural_in_pullbacks}
	Let $ f \colon \fromto{X}{Y} $ be a map of topological spaces.
	Then the square
	\begin{equation}\label{square:c_naturality}
		\begin{tikzcd}[row sep=2em, column sep=3.5em]
			\Sh(Y) \arrow[r, "\cupperstar_Y"] \arrow[d, "\fupperstar"'] & \Sh(\Comp)_{/Y} \arrow[d, "X \cross_Y (-)"] \\
			\Sh(X) \arrow[r, "\cupperstar_X"'] & \Sh(\Comp)_{/X}
		\end{tikzcd} 
	\end{equation}
	canonically commutes.
\end{lemma}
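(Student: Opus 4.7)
The plan is to verify the commutativity of the square by checking both composites on a generating family of objects and then invoking cocontinuity. The horizontal arrows $\cupperstar_X$ and $\cupperstar_Y$ are left adjoints, the left vertical $\fupperstar$ is a left adjoint, and the pullback $X \cross_Y (-)$ preserves colimits because the \topos $\Sh(\Comp)$ has universal colimits. Hence both composite functors $\cupperstar_X \circ \fupperstar$ and $(X \cross_Y -) \circ \cupperstar_Y$ preserve all colimits. Since $\Sh(Y)$ is generated under colimits by the representable sheaves $\yo_Y(U)$ with $U \in \Open(Y)$, it suffices to exhibit a natural equivalence between the two composites on objects of the form $\yo_Y(U)$, and then extend by left Kan extension.

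For such a $U$, the definition of $\cupperstar_Y$ as the left Kan extension of $\yo \colon \Open(Y) \to \Sh(\Comp)_{/Y}$ along the Yoneda embedding gives $\cupperstar_Y(\yo_Y(U)) \equivalent \yo(U)$, with structure map to $\yo(Y)$ induced by the inclusion $U \hookrightarrow Y$. Pulling back along $f$ in $\Sh(\Comp)$ and using that the restricted Yoneda functor $\yo \colon \fromto{\Top}{\Sh(\Comp)} $ preserves all limits yields
\begin{equation*}
	X \crosslimits_Y \yo(U) \equivalent \yo(X \crosslimits_Y U) \equivalent \yo(f^{-1}(U)) \comma
\end{equation*}
where the last identification is the standard one of the topological pullback $X \cross_Y U$ with $f^{-1}(U) \subset X$.

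Going down and then across, recall that the geometric morphism $\fupperstar \colon \fromto{\Sh(Y)}{\Sh(X)}$ is induced by the morphism of sites $f^{-1} \colon \fromto{\Open(Y)}{\Open(X)}$, so $\fupperstar(\yo_Y(U)) \equivalent \yo_X(f^{-1}(U))$. Applying $\cupperstar_X$ and using the defining formula of $\cupperstar_X$ on representables gives $\yo(f^{-1}(U)) \in \Sh(\Comp)_{/X}$, again with its evident structure map to $\yo(X)$. Both composites therefore agree on representables, naturally in $U$.

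The one point that requires a check, and is the main (mild) obstacle, is the compatibility of the slice structures: one must verify that the equivalence $X \cross_Y \yo(U) \equivalent \yo(f^{-1}(U))$ intertwines the two natural structure maps to $\yo(X)$. This is immediate from the fact that $\yo$ carries the square $f^{-1}(U) \to X, \; U \to Y$ to a pullback square in $\Sh(\Comp)$. With this coherence in hand, the natural equivalence of composites on representables extends, by the universal property of left Kan extension from $\Open(Y)$, to a canonical equivalence on all of $\Sh(Y)$, giving the commutativity of the square.
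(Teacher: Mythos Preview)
Your proof is correct and takes essentially the same approach as the paper: reduce to opens by cocontinuity, then use that $\yo$ preserves pullbacks to identify both composites with $\yo(f^{-1}(U))$. The paper's proof is just a terser version of what you wrote, omitting your explicit check of the slice structure compatibility.
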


\begin{proof}
	Since both composites are left adjoints and $ \Sh(Y) $ is generated under colimits by $ \Open(Y) $, it suffices to show that these composites agree on opens.
	For this, note for $ V \subset Y $ open, we have
	\begin{align*}
		\cupperstar_X\fupperstar(V) &= \yo(f^{-1}(V)) && \textup{(by definition)} \\
		&\equivalent \yo(V) \crosslimits_{\yo(Y)} \yo(X) && (\yo \textup{ preserves pullbacks)}\\
		&= \cupperstar_Y(V) \cross_{Y} X && \textup{(by definition)\phantom{pullbacks}} \period \qedhere
	\end{align*}
\end{proof}

\begin{nul}
	In light of \Cref{lem:c_natural_in_pullbacks}, the comparison functors assemble into a natural transformation $ \fromto{\Sh(-)}{\Sh(\Comp)_{/(-)}} $ of presheaves of \categories on $ \Top $.
\end{nul}

For open embeddings, the square \eqref{square:c_naturality} is also vertically left adjointable:

\begin{lemma}\label{lem:functoriality_of_comparison_for_opens}
	Let $ j \colon \incto{U}{X} $ be an open embedding of topological spaces.
	Then the square
	\begin{equation*}
		\begin{tikzcd}[row sep=2em, column sep=3.5em]
			\Sh(U) \arrow[r, "\cupperstar_U"] \arrow[d, "\jlowershriek"', hooked] & \Sh(\Comp)_{/U} \arrow[d, "\textup{forget}", hooked] \\
			\Sh(X) \arrow[r, "\cupperstar_X"'] & \Sh(\Comp)_{/X}
		\end{tikzcd} 
	\end{equation*}
	canonically commutes.
\end{lemma}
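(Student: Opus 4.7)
The plan is to argue exactly as in the proof of \Cref{lem:c_natural_in_pullbacks}. First, I would observe that both composites $ \cupperstar_X \circ \jlowershriek $ and $ (\textup{forget}) \circ \cupperstar_U $ from $ \Sh(U) $ to $ \Sh(\Comp)_{/X} $ are left adjoints: $ \jlowershriek $ is left adjoint to restriction along $ j $, the forgetful functor $ \Sh(\Comp)_{/U} \to \Sh(\Comp)_{/X} $ is left adjoint to pullback along $ \yo(j) $, and both of the $ \cupperstar $'s are left adjoints by construction. Since $ \Sh(U) $ is generated under colimits by the representables $ \yo(V) $ for $ V \in \Open(U) $, it will suffice to produce a natural equivalence between the two composites after restriction to $ \Open(U) $.

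Next, for $ V \in \Open(U) $ I would use the key observation that, because $ j $ is an open embedding, $ V $ is also open in $ X $, and $ \jlowershriek $ applied to the representable sheaf $ \yo(V) \in \Sh(U) $ is canonically identified with the representable sheaf $ \yo(V) \in \Sh(X) $. Unwinding the definition of $ \cupperstar $ then gives
\[ \cupperstar_X \jlowershriek(V) = \bigl(\yo(V) \to \yo(X)\bigr), \]
with structure map induced by the inclusion $ V \subset X $. On the other hand, applying the forgetful functor to $ \cupperstar_U(V) = \bigl(\yo(V) \to \yo(U)\bigr) $ produces $ \yo(V) $ equipped with the structure map given by the composite $ \yo(V) \to \yo(U) \to \yo(X) $. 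These two objects of $ \Sh(\Comp)_{/X} $ are canonically equivalent since the inclusion $ V \subset X $ factors as $ V \subset U \subset X $.

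Finally, since both composites preserve colimits and $ \Open(U) $ generates $ \Sh(U) $ under colimits, the natural identification on opens extends to a natural equivalence on all of $ \Sh(U) $. I do not anticipate a serious obstacle here: the argument is entirely parallel to \Cref{lem:c_natural_in_pullbacks}, with the single essential input being the compatibility of extension-by-zero with the Yoneda embedding in the case of an open embedding.
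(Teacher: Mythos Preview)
Your proposal is correct and follows essentially the same approach as the paper: reduce to representables by noting that both composites are colimit-preserving, then verify the identification on opens using that $\jlowershriek$ carries the representable $V \in \Sh(U)$ to the representable $V \in \Sh(X)$. The paper's proof is just a terser version of exactly this argument.
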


\begin{proof}
	Since both composites are left adjoints and $ \Sh(U) $ is generated under colimits by the opens of $ U $, it suffices to show that these composites agree on opens.
	For this, note that for $ V \subset U $ open, by definition we have
	\begin{equation*}
		\cupperstar_{X}\jlowershriek(V) = \cupperstar_X(V) = \yo(V) = \cupperstar_U(V)
	\end{equation*}
	as objects of $ \Sh(\Comp)_{/X} $.
\end{proof}


\section{Full faithfulness of the comparison functor}\label{sec:full_faithfulness_of_comparison}

Let $ \E $ be a compactly assembled \category (\Cref{rec:compactly_assembled}).
In this section, we show that if $ X $ is a locally compact Hausdorff space, then the comparison functor
\begin{equation*}
	\cupperstarpost \colon \fromto{\Shpost(X;\E)}{\Shpost(\CompX;\E)}
\end{equation*}
is fully faithful (\Cref{cor:cupperstarpost_fully_faithful_LCH}).
Since cohomology is computed by global sections, this implies that the sheaf cohomology and condensed cohomology of $ X $ agree (see \Cref{cor:cohomology_comparison_general,rmk:comparing_condensed_singular_sheaf}).

In \cref{subsec:full_faithfulness_extremally_disconnected}, we begin by proving the claim when $ X $ is an extremally disconnected profinite set.
In \cref{subsec:full_faithfulness_and_descent}, we use proper descent (\Cref{cor:general_descent}) and the functoriality of the comparison morphism to prove the result in general.
\Cref{subsec:cupperstar_Postnikov_completion_is_necessary} explains why the functor $ \cupperstar_X \colon \fromto{\Sh(X)}{\Sh(\Comp)_{/X}} $ is not generally fully faithful before Postnikov completion.


\subsection{Full faithfulness for extremally disconected profinite sets}\label{subsec:full_faithfulness_extremally_disconnected}

Let $ S $ be an extremally disconnected profinite set.
To prove that the functor \smash{$ \cupperstar_S $} is fully faithful, we verify that the inclusion \smash{$ \Clop(S) \subset \CompS $} satisfies the \textit{covering lifting property}.%
\footnote{The reader unfamiliar with the covering lifting property should consult \cite[Definition A.12]{arXiv:1803.01804}.}

\begin{recollection}
	If $ S $ is an extremally disconnected topological space, and $ U \subset S $ is open, then $ U $ is also extremally disconnected.
	However, closed subsets of extremally disconnected topological spaces need not be extremally disconnected \cite[Remark 2.4.11]{MR3379634}.
\end{recollection}

\begin{lemma}\label{lem:comp_covering_lifting_property}
	Let $ S $ be an extremally disconnected profinite set.
	Then the inclusion of sites 
	\begin{equation*}
		\incto{\Clop(S)}{\CompS}
	\end{equation*}
	satisfies the covering lifting property.
\end{lemma}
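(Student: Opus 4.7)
The plan is to verify the covering lifting property directly by construction, using Gleason's theorem that the extremally disconnected profinite sets are precisely the projective objects of $ \Comp $ (equivalently, every surjection of compact Hausdorff spaces onto an extremally disconnected profinite set admits a section).

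Concretely, I would begin by unwinding what must be shown: given a clopen $ U \subset S $, regarded as an object of $ \CompU $ via its inclusion, and a covering family $ \{T_i \to U\}_{i \in I} $ of $ U $ in $ \CompU $, I need to produce a clopen cover $ \{V_j \subset U\}_{j \in J} $ of $ U $ such that each inclusion $ V_j \incto U $ factors through some $ T_i \to U $. Since the topology on $ \Comp $ only admits finite jointly surjective covers, $ I $ is finite and the assembled map $ p \colon \coprod_{i \in I} T_i \surjto U $ is a surjection of compact Hausdorff spaces. Because $ U $ is clopen in the extremally disconnected space $ S $, $ U $ is itself extremally disconnected, and hence projective in $ \Comp $; consequently, $ p $ admits a section $ s \colon U \to \coprod_{i \in I} T_i $.

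Finally, I would set $ V_i \colonequals s^{-1}(T_i) $. Since each $ T_i $ is clopen in the finite disjoint union, the $ V_i $ form a finite clopen partition of $ U $, and each inclusion $ V_i \incto U $ factors through $ T_i \to U $ via the restriction $ s|_{V_i} \colon V_i \to T_i $. The resulting sieve on $ U $ generated by $ \{V_i \subset U\}_{i \in I} $ lies in the topology of $ \Clop(S) $ and its image in $ \CompU $ is contained in the given covering sieve.

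I do not anticipate a serious obstacle. The two background facts — that clopen subsets of extremally disconnected spaces remain extremally disconnected, and Gleason's identification of projective objects in $ \Comp $ — are standard and implicitly used elsewhere in the paper. The content of the proof is really just one application of projectivity, followed by decomposing the section's target into its clopen components.
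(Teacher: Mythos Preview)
Your proposal is correct and follows essentially the same argument as the paper: use that a clopen subset of an extremally disconnected profinite set is again extremally disconnected, split the surjection $\coprod_i T_i \surjto U$ via projectivity, and take $V_i = s^{-1}(T_i)$. Aside from a minor slip (you write $\CompU$ where $\CompS$ is meant), the steps match the paper's proof.
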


\begin{proof}
	Let $ U \subset S $ be a clopen subset and let $ \{f_i \colon \fromto{K_i}{U}\}_{i \in I} $ be a finite jointly surjective family in $ \CompS $.
	We claim that there exits a cover $ \{V_i\}_{i \in i} $ of $ U $ by clopens such that each inclusion $ \incto{V_i}{U} $ factors through $ f_i \colon \fromto{K_i}{U} $.
	To see this, first note that since $ U \subset S $ is clopen, $ U $ is also an extremally disconnected profinite set.
	Thus the surjection
	\begin{equation*}
		\coprod_{i \in I} f_i \colon \surjto{\coprod_{i \in I} K_i}{U}
	\end{equation*}
	admits a section $ s \colon \fromto{U}{\coprod_{i \in I} K_i} $.
	For each $ i \in I $, define $ V_i \colonequals s^{-1}(K_i) $.
	Since $ K_i $ is a clopen subset of $ \coprod_{i \in I} K_i $, the subset $ V_i \subset U $ is also clopen.
	Moreover, $ \{V_i\}_{i \in I} $ covers $ U $.
	Lastly, by construction, for each $ i \in I $, the composite
	\begin{equation*}
		\begin{tikzcd}
			V_i \arrow[r, "\restrict{s}{V_i}"] & K_i \arrow[r, "f_i"] & U
		\end{tikzcd}
	\end{equation*}
	coincides with the inclusion $ V_i \subset U $.
\end{proof}

The following is an application of \Cref{obs:cupperstar_induce_by_morphism_of_sites_for_profinite_sets}, \Cref{lem:comp_covering_lifting_property}, and \cite[Proposition A.13]{arXiv:1803.01804}.

\begin{notation}
	Let $ S $ be a profinite set and $ \E $ a presentable \category.
	Write 
	\begin{equation*}
		\cuppersharp_S \colon \incto{\PSh(\Clop(S);\E)}{\PSh(\CompS;\E)}
	\end{equation*}
	for the functor given by right Kan extension the inclusion \smash{$ \incto{\Clop(S)^{\op}}{(\CompS)^{\op}} $}.
\end{notation}

\begin{corollary}\label{cor:comp_covering_lifting_consequences}\label{cor:extremally_disconnected_cupperstar_fully_faithful}
	Let $ S $ be an extremally disconnected profinite set and $ \E $ a presentable \category.
	Then:
	\begin{enumerate}[label=\stlabel{cor:comp_covering_lifting_consequences}, ref=\arabic*]
		\item\label{cor:comp_covering_lifting_consequences.1} The functor $ \cuppersharp_S $ preserves sheaves.
		In particular, $ \cuppersharp_S $ restricts to a \emph{fully faithful} right adjoint to 
		\begin{equation*}
			c_{S,\ast} \colon \fromto{\Sh(\CompS;\E)}{\Sh(S;\E)} \period
		\end{equation*}
		
		\item\label{cor:comp_covering_lifting_consequences.2} The left adjoint $ \cupperstar_S \colon \incto{\Sh(S;\E)}{\Sh(\CompS;\E)} $ is fully faithful.
	\end{enumerate}
\end{corollary}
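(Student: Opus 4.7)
The plan is to deduce both items directly from the covering-lifting property established in \Cref{lem:comp_covering_lifting_property} together with the identification of $c_{S,\ast}$ as restriction of presheaves provided by \Cref{obs:cupperstar_induce_by_morphism_of_sites_for_profinite_sets}.

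First I would treat item \enumref{cor:comp_covering_lifting_consequences}{1}. By \Cref{obs:cupperstar_induce_by_morphism_of_sites_for_profinite_sets}, after identifying $\Sh(S;\E) \equivalent \Sh(\Clop(S);\E)$, the pushforward $c_{S,\ast}$ is simply the restriction of presheaves along the inclusion of sites $\Clop(S) \subset \CompS$, and on presheaves its right adjoint is precisely $\cuppersharp_S$. The covering-lifting property from \Cref{lem:comp_covering_lifting_property} implies, by \cite[Proposition A.13]{arXiv:1803.01804}, that this right Kan extension carries sheaves to sheaves; thus $\cuppersharp_S$ restricts to a functor $\Sh(\Clop(S);\E) \to \Sh(\CompS;\E)$ that is right adjoint to $c_{S,\ast}$. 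The same reference shows that this right adjoint is fully faithful — equivalently, the unit of the adjunction on presheaves is an equivalence on sheaves, which is exactly the condition that restricting a sheaf $F$ on $\CompS$ to $\Clop(S)$ and then right Kan extending recovers $F$.

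For item \enumref{cor:comp_covering_lifting_consequences}{2}, the fact that the right adjoint $\cuppersharp_S$ to $c_{S,\ast}$ is fully faithful says that $c_{S,\ast}$ exhibits $\Sh(S;\E)$ as a localization of $\Sh(\CompS;\E)$; equivalently, the unit $\id \to c_{S,\ast}\cupperstar_S$ of the original geometric morphism is an equivalence. But this last statement is precisely fully faithfulness of $\cupperstar_S$.

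The only non-routine input is the covering-lifting property itself, which was handled in \Cref{lem:comp_covering_lifting_property} using extremal disconnectedness to split the surjection $\coprod_i K_i \twoheadrightarrow U$; everything downstream is a formal consequence of \cite[Proposition A.13]{arXiv:1803.01804} applied in the $\E$-valued setting (which works since presheaf categories with values in $\E$ are tensored over $\PSh(-)$ and both restriction and right Kan extension commute with this tensoring).
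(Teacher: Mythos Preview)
Your argument follows the paper's exactly: combine \Cref{obs:cupperstar_induce_by_morphism_of_sites_for_profinite_sets}, \Cref{lem:comp_covering_lifting_property}, and \cite[Proposition A.13]{arXiv:1803.01804}. Two minor expository points are worth flagging. The parenthetical in item (1) has the direction reversed: ``restricting a sheaf $F$ on $\CompS$ to $\Clop(S)$ and then right Kan extending recovers $F$'' is the unit $F \to c^\sharp_S c_{S,\ast} F$ being an equivalence, which would express full faithfulness of $c_{S,\ast}$, not of $c^\sharp_S$; full faithfulness of $c^\sharp_S$ is simply the fact that right Kan extension along the fully faithful inclusion $\Clop(S)\hookrightarrow\CompS$ is fully faithful. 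In item (2), the passage from ``$c^\sharp_S$ fully faithful'' to ``the unit $\id \to c_{S,\ast}c^*_S$ is an equivalence'' is correct, but it is the standard lemma on adjoint triples (for $L\dashv M\dashv R$, the left adjoint $L$ is fully faithful iff the right adjoint $R$ is), not merely a restatement of what ``localization'' means. Neither point affects the validity of the argument.
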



\subsection{Full faithfulness and descent}\label{subsec:full_faithfulness_and_descent}

In this subsection, we extend \enumref{cor:comp_covering_lifting_consequences}{2} to Postnikov complete sheaves on a locally compact Hausdorff space.
We begin by extending to compact Hausdorff spaces by proper descent.
To do so, we make use of the following well-known lemma:

\begin{lemma}\label{lem:limits_of_fully_faithful_functors}
	Let $ \Ical $ be \acategory, let $ \Xcal_{\bullet},\Ycal_{\bullet} \colon \fromto{\Ical}{\Catinfty} $ be diagrams of \categories, and let $ f_{\bullet} \colon \fromto{\Xcal_{\bullet}}{\Ycal_{\bullet}} $ be a natural transformation.
	If for each $ i \in \Ical $, the functor $ f_i \colon \fromto{\Xcal_i}{\Ycal_i} $ is fully faithful, then the induced functor on limits
	\begin{equation*}
		\lim_{i \in I} f_i \colon \lim_{i \in I} \Xcal_i \to \lim_{i \in I} \Ycal_i
	\end{equation*}
	is fully faithful.
\end{lemma}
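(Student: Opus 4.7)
The plan is to verify directly that the induced functor $ \lim_i f_i $ is fully faithful by checking the mapping space criterion. The key input is the standard fact that mapping spaces in a limit of \categories are computed pointwise: for any diagram $ \Zcal_{\bullet} \colon \fromto{\Ical}{\Catinfty} $ and any pair of objects $ Z = (Z_i)_{i \in \Ical} $ and $ W = (W_i)_{i \in \Ical} $ of $ \lim_{i \in \Ical} \Zcal_i $, there is a canonical equivalence
\[ \Map_{\lim_i \Zcal_i}(Z, W) \equivalent \lim_{i \in \Ical} \Map_{\Zcal_i}(Z_i, W_i). \]

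With this in hand, given $ X = (X_i)_{i \in \Ical} $ and $ Y = (Y_i)_{i \in \Ical} $ in $ \lim_{i \in \Ical} \Xcal_i $, the natural transformation $ f_{\bullet} $ yields a compatible system of comparison maps $ \Map_{\Xcal_i}(X_i, Y_i) \to \Map_{\Ycal_i}(f_i(X_i), f_i(Y_i)) $, each of which is an equivalence by the full faithfulness of $ f_i $. Since a limit of equivalences in $ \Spc $ is again an equivalence, taking the limit over $ \Ical $ produces an equivalence between $ \Map_{\lim \Xcal}(X, Y) $ and $ \Map_{\lim \Ycal}\bigl((\lim_i f_i)(X), (\lim_i f_i)(Y)\bigr) $, which is exactly the assertion that $ \lim_i f_i $ is fully faithful.

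This is essentially the whole argument; the only non-trivial step is invoking the pointwise mapping space formula for limits of \categories, which is the main (and only) obstacle. A more conceptual alternative is to observe that fully faithful functors form the right class of an orthogonal factorization system on $ \Catinfty $ (whose left class is the essentially surjective functors), and the right class of any such system is closed under limits in the arrow category $ \Fun(\Delta^1, \Catinfty) $. Equivalently, one may characterize fully faithful functors by a Cartesian square involving the source and target of the mapping-space or twisted-arrow construction, and this pullback condition is manifestly preserved under limits.
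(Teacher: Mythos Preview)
Your argument is correct. The paper itself does not give a proof of this lemma: it introduces it as ``the following well-known lemma'' and states it without justification. Your direct check via the pointwise mapping-space formula for limits in $\Catinfty$ is the standard route, and your alternative remark (fully faithful functors as the right class of a factorization system, hence closed under limits in the arrow category) is also a valid and common justification.
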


\begin{corollary}\label{cor:Postnikov_completion_preserves_full_faithfulness}
	Let $ \fupperstar \colon \fromto{\Y}{\X} $ be a left exact left adjoint functor between \topoi.
	The following are equivalent:
	\begin{enumerate}[label=\stlabel{cor:Postnikov_completion_preserves_full_faithfulness}, ref=\arabic*]
		\item\label{cor:Postnikov_completion_preserves_full_faithfulness.1} The induced functor on Postnikov completions $ \fupperstarpost \colon \fromto{\Ypost}{\Xpost} $ is fully faithful.

		\item\label{cor:Postnikov_completion_preserves_full_faithfulness.2} For each integer $ n \geq 0 $, the restriction $ \fupperstar \colon \fromto{\Y_{\leq n}}{\X_{\leq n}} $ is fully faithful. 
	\end{enumerate}
\end{corollary}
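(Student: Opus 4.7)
The plan is to identify $\fupperstarpost \colon \fromto{\Ypost}{\Xpost}$ with the limit, formed in $\Catinfty$, of the restricted functors $\fupperstar \colon \fromto{\Y_{\leq n}}{\X_{\leq n}}$, and then deduce both implications from this identification. Because $\fupperstar$ is left exact, it preserves $n$-truncated objects and commutes with the truncation functors $\trun_{\leq n}$; hence the restrictions to $n$-truncated subcategories assemble into a natural transformation between the defining diagrams of $\Ypost$ and $\Xpost$ from \Cref{def:Postnikovstuff}. The induced functor on limits coincides with $\fupperstarpost$, either by the functoriality of Postnikov completion recalled in \cref{rec:Postnikov_completion_preserves_limits_colimits} or, more directly, from the universal property of $\Xpost$ as a limit in $\Catinfty$.

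Given this identification, the direction \enumref{cor:Postnikov_completion_preserves_full_faithfulness}{2} $\Rightarrow$ \enumref{cor:Postnikov_completion_preserves_full_faithfulness}{1} is immediate from \Cref{lem:limits_of_fully_faithful_functors}: a limit in $\Catinfty$ of fully faithful functors is fully faithful.

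For \enumref{cor:Postnikov_completion_preserves_full_faithfulness}{1} $\Rightarrow$ \enumref{cor:Postnikov_completion_preserves_full_faithfulness}{2}, I would use \cref{nul:Postnikov_completion_equivalence_on_truncated}, which records that the Postnikov completion functors restrict to equivalences $\equivto{\Y_{\leq n}}{(\Ypost)_{\leq n}}$ and $\equivto{\X_{\leq n}}{(\Xpost)_{\leq n}}$. Since $\fupperstarpost$ is itself the pullback of a geometric morphism between Postnikov complete \topoi, it is left exact, and therefore preserves $n$-truncated objects. Its restriction $\fromto{(\Ypost)_{\leq n}}{(\Xpost)_{\leq n}}$ is then identified, under the above equivalences, with $\fupperstar \colon \fromto{\Y_{\leq n}}{\X_{\leq n}}$. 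Since the restriction of a fully faithful functor to any subcategory remains fully faithful, \enumref{cor:Postnikov_completion_preserves_full_faithfulness}{1} implies \enumref{cor:Postnikov_completion_preserves_full_faithfulness}{2}.

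I do not anticipate any serious obstacle here; the argument is essentially formal bookkeeping once one grants that $\fupperstarpost$ agrees with the limit $\lim_n (\fupperstar|_{\Y_{\leq n}})$, and this compatibility is built into the construction of Postnikov completion as a limit diagram in $\Catinfty$ together with the functoriality noted in \cref{rec:Postnikov_completion_preserves_limits_colimits}.
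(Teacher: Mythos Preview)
Your proposal is correct and follows essentially the same approach as the paper: the paper's proof is simply the two-sentence version of what you wrote, invoking \Cref{nul:Postnikov_completion_equivalence_on_truncated} for \enumref{cor:Postnikov_completion_preserves_full_faithfulness}{1} $\Rightarrow$ \enumref{cor:Postnikov_completion_preserves_full_faithfulness}{2} and \Cref{lem:limits_of_fully_faithful_functors} for \enumref{cor:Postnikov_completion_preserves_full_faithfulness}{2} $\Rightarrow$ \enumref{cor:Postnikov_completion_preserves_full_faithfulness}{1}. Your unpacking of the identification of $\fupperstarpost$ with the limit of the truncated functors and of the restriction argument is exactly the reasoning implicit in those citations.
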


\begin{proof}
	\Cref{nul:Postnikov_completion_equivalence_on_truncated} shows that \enumref{cor:Postnikov_completion_preserves_full_faithfulness}{1} $ \Rightarrow $ \enumref{cor:Postnikov_completion_preserves_full_faithfulness}{2}.
	\Cref{lem:limits_of_fully_faithful_functors} shows that \enumref{cor:Postnikov_completion_preserves_full_faithfulness}{2} $ \Rightarrow $ \enumref{cor:Postnikov_completion_preserves_full_faithfulness}{1}.
\end{proof}

\begin{corollary}\label{cor:extremally_disconnected_cupperstarpost_fully_faithful}
	Let $ S $ be an extremally disconnected profinite set and $ \E $ a compactly assembled \category.
	Then the comparison functor
	\begin{equation*}
		\cupperstarpost_S \colon \fromto{\Sh(S;\E)}{\Shpost(\CompS;\E)}
	\end{equation*}
	is fully faithful.
\end{corollary}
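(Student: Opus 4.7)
Since $ S $ is a profinite set, \Cref{ex:Sh_on_profinset} gives that $ \Sh(S) $ is Postnikov complete, and combined with compact assembly of $ \E $ this yields $ \Sh(S;\E) \equivalent \Shpost(S;\E) $.  The strategy is to present both $ \Sh(S;\E) $ and $ \Shpost(\CompS;\E) $ as limits in $ \PrL $ of tensor products of $ \E $ against the truncation levels of $ \Sh(S) $ and $ \Sh(\CompS) $, identify $ \cupperstarpost_S $ with the induced map of limits, and conclude via \Cref{lem:limits_of_fully_faithful_functors}.  Concretely, the compact assembly of $ \E $ (as invoked in the proof of \Cref{cor:general_descent}) implies that $ - \tensor \E $ commutes with limits in $ \PrL $ along left exact transition functors, so
\begin{equation*}
	\Sh(S;\E) \equivalent \lim_{n \geq 0} \bigl(\Sh(S)_{\leq n} \tensor \E\bigr) \andeq \Shpost(\CompS;\E) \equivalent \lim_{n \geq 0} \bigl(\Sh(\CompS)_{\leq n} \tensor \E\bigr) \comma
\end{equation*}
and under these identifications $ \cupperstarpost_S $ corresponds to the limit of the functors $ \cupperstar_S|_{\leq n} \tensor \id_{\E} $.

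The key step is to verify that each $ \cupperstar_S|_{\leq n} \tensor \id_{\E} $ is fully faithful.  For this I would use the triple adjunction $ \cupperstar_S \dashv c_{S,\ast} \dashv \cuppersharp_S $ furnished by \Cref{cor:extremally_disconnected_cupperstar_fully_faithful} and \Cref{cor:comp_covering_lifting_consequences}.  Since $ c_{S,\ast} $ and $ \cuppersharp_S $ are right adjoints, both preserve $ n $-truncated objects, so the entire triple restricts to the subcategories of $ n $-truncated objects as $ \cupperstar_S|_{\leq n} \dashv c_{S,\ast}|_{\leq n} \dashv \cuppersharp_S|_{\leq n} $.  In particular $ c_{S,\ast}|_{\leq n} $ is a left adjoint in $ \PrL $, and the unit $ \id \to c_{S,\ast}|_{\leq n} \circ \cupperstar_S|_{\leq n} $ of the first adjunction is an equivalence (because $ \cupperstar_S $ is fully faithful, and this property restricts to truncated subcategories).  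Tensoring this unit equivalence with $ \id_{\E} $ in $ \PrL $ produces the unit of the induced adjunction $ \cupperstar_S|_{\leq n} \tensor \id_{\E} \dashv c_{S,\ast}|_{\leq n} \tensor \id_{\E} $, which is therefore still an equivalence, so $ \cupperstar_S|_{\leq n} \tensor \id_{\E} $ is fully faithful.  Applying \Cref{lem:limits_of_fully_faithful_functors} to the limit presentation then finishes the proof.

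The principal subtlety I anticipate is ensuring that $ c_{S,\ast} $—nominally only the right adjoint of a geometric morphism—is itself a left adjoint, so that $ c_{S,\ast}|_{\leq n} $ lies in $ \PrL $ and the unit equivalence can legitimately be tensored with $ \id_{\E} $.  This is supplied by \Cref{cor:comp_covering_lifting_consequences}, which furnishes the right adjoint $ \cuppersharp_S $ using the covering lifting property of $ \incto{\Clop(S)}{\CompS} $ from \Cref{lem:comp_covering_lifting_property}; the extremal disconnectedness of $ S $ enters exactly here.
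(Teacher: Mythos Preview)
Your argument is correct and follows the same overall strategy as the paper: reduce to showing full faithfulness at each truncation level via \Cref{cor:extremally_disconnected_cupperstar_fully_faithful}, then pass to the limit using \Cref{lem:limits_of_fully_faithful_functors}. The paper's proof is more terse: it cites an external result (Lemma 2.14 of arXiv:2108.03545) asserting that for $\E$ compactly assembled, tensoring with $\E$ preserves fully faithful left exact left adjoints, thereby reducing immediately to $\E = \Spc$, and then invokes \Cref{cor:extremally_disconnected_cupperstar_fully_faithful} together with \Cref{cor:Postnikov_completion_preserves_full_faithfulness}.

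The one genuine difference is how you handle the passage to general $\E$. Rather than appeal to that external lemma, you exploit the further right adjoint $\cuppersharp_S$ supplied by \Cref{cor:comp_covering_lifting_consequences} to place the entire triple $\cupperstar_S|_{\leq n} \dashv c_{S,\ast}|_{\leq n} \dashv \cuppersharp_S|_{\leq n}$ inside $\PrL$, so that the unit equivalence can be tensored with $\id_{\E}$. This is a nice observation: it makes the ``tensoring preserves full faithfulness'' step work for \emph{any} presentable $\E$, with compact assembly only entering to commute $-\tensor\E$ past the Postnikov limit (as in the proof of \Cref{cor:general_descent}). The paper's route is shorter but leans on a black-box lemma; yours is more self-contained at the cost of the extra bookkeeping with the triple adjunction.
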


\begin{proof}
	Since \smash{$ \cupperstarpost \colon \fromto{\Sh(S)}{\Shpost(\CompS)} $} is left exact and $ \E $ is compactly assembled, by tensoring with $ \E $ it suffices to prove the claim for Postnikov sheaves valued in the \category of spaces \cite[Lemma 2.14]{arXiv:2108.03545}.
	The claim now follows from \Cref{cor:extremally_disconnected_cupperstar_fully_faithful,cor:Postnikov_completion_preserves_full_faithfulness}.
\end{proof}

\begin{recollection}[(van Kampen colimits)]\label{rec:van_Kampen_colimits}
	A colimit in \acategory $ \Xcal $ with pullbacks is \defn{van Kampen} if the functor $ \fromto{\Xcal^{\op}}{\Catinfty} $ given by $ \goesto{U}{\Xcal_{/U}} $ transforms it into a limit in $ \Catinfty $.
	A presentable \category $ \Xcal $ is \atopos if and only if all colimits in $ \Xcal $ are van Kampen; see \cites[\HTTthm{Proposition}{5.5.3.13}, \href{http://www.math.ias.edu/~lurie/papers/HTT.pdf\#theorem.6.1.3.9}{Theorem 6.1.3.9(3)}, \& \HTTthm{Proposition}{6.3.2.3}]{HTT}{MR3935451}.
\end{recollection}

\begin{corollary}\label{cor:cupperstarpost_fully_faithful_compact_Hausdorff}
	Let $ K $ be a compact Hausdorff space and $ \E $ a compactly assembled \category.
	Then the comparison functor
	\begin{equation*}
		\cupperstarpost_K \colon \fromto{\Shpost(K;\E)}{\Shpost(\CompK;\E)}
	\end{equation*}
	is fully faithful.
\end{corollary}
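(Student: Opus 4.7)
The plan is to reduce to the extremally disconnected case (\Cref{cor:extremally_disconnected_cupperstarpost_fully_faithful}) by hyperdescent along a hypercover of $K$ by extremally disconnected profinite sets. Since $\Extr \subset \Comp$ is a basis for the Grothendieck topology (\cref{nul:extremally_disconnected_basis}), one can inductively build a hypercover $E_{\bullet} \to K$ in $\Comp$ with each $E_n$ extremally disconnected, by iteratively surjecting from Stone--Čech compactifications of the discretized matching objects.

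First, I would apply the hypersheaf property of $\Shpost(-;\E)$ on $\Comp$ (\Cref{cor:general_descent}\enumref{cor:general_descent}{2}) to obtain
\[
\Shpost(K;\E) \simeq \lim_{[n] \in \DDelta} \Shpost(E_n;\E).
\]
Second, I would establish analogous hyperdescent for $\Shpost(\Comp_{/(-)};\E)$. The Yoneda embedding $\yo \colon \Comp \to \Sh(\Comp)$ preserves pullbacks and sends surjections in $\Comp$ to effective epimorphisms, so $\yo(E_{\bullet}) \to \yo(K)$ is a hypercover in $\Sh(\Comp)$. Each representable is $0$-truncated, hence hypercomplete (\Cref{rmk:hypersheaves_in_an_n-category}), so passing to the hypercompletion $\Shhyp(\Comp) \simeq \Sh(\Extr)$, which is Postnikov complete (\cref{nul:ShhypComp_Postnikov_complete}), the hypercover becomes effective: $\yo(K) \simeq \colim_{\Deltaop} \yo(E_{\bullet})$ in $\Sh(\Extr)$. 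Combined with the identification $(\Sh(\Comp)_{/K})^{\post} \simeq \Sh(\Extr)_{/K}$---which follows from hypercompletion commuting with slicing by hypercomplete objects and the fact that slices of Postnikov-complete topoi by $0$-truncated objects are again Postnikov complete---van Kampen colimits (\Cref{rec:van_Kampen_colimits}) in $\Sh(\Extr)$ give
\[
(\Sh(\Comp)_{/K})^{\post} \simeq \lim_{[n]} (\Sh(\Comp)_{/E_n})^{\post}.
\]
Tensoring with the compactly assembled \category $\E$ preserves this limit by \cite[Lemma 2.15]{arXiv:2108.03545}, yielding
\[
\Shpost(\CompK;\E) \simeq \lim_{[n]} \Shpost(\Comp_{/E_n};\E).
\]

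Finally, by the naturality of $\cupperstar$ in pullbacks (\Cref{lem:c_natural_in_pullbacks}), which is compatible with Postnikov completion and with tensoring by $\E$, the functor $\cupperstarpost_K$ is identified under the two equivalences above with $\lim_{[n]} \cupperstarpost_{E_n}$. Each $\cupperstarpost_{E_n}$ is fully faithful by \Cref{cor:extremally_disconnected_cupperstarpost_fully_faithful}, so \Cref{lem:limits_of_fully_faithful_functors} implies the limit, and hence $\cupperstarpost_K$, is fully faithful.

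The main obstacle is the slice-Postnikov identification $(\Sh(\Comp)_{/K})^{\post} \simeq \Sh(\Extr)_{/K}$ together with verifying that Postnikov completion, tensoring with $\E$, and the hypercover-descent equivalences are genuinely naturally compatible with $\cupperstar$; once these bookkeeping compatibilities are in place, the fully-faithful base case from \cref{subsec:full_faithfulness_extremally_disconnected} propagates directly through the limit.
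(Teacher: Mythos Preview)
Your proposal is correct and follows essentially the same route as the paper: reduce to extremally disconnected profinite sets via a hypercover, invoke hyperdescent for both source and target, and conclude by \Cref{lem:limits_of_fully_faithful_functors} and \Cref{cor:extremally_disconnected_cupperstarpost_fully_faithful}. The paper's proof compresses your second step---hyperdescent for $\Shpost(\Comp_{/(-)};\E)$---into a bare citation of van Kampen colimits (\Cref{rec:van_Kampen_colimits}), whereas your unpacking via the identification $(\Sh(\Comp)_{/K})^{\post} \simeq \Shhyp(\Comp)_{/K}$ and effectivity of hypercovers in the hypercomplete topos makes explicit what that citation is relying on.
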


\begin{proof}
	Since the functors
	\begin{equation*}
		\goesto{K}{\Shpost(K;\E)} \andeq \goesto{K}{\Shpost(\CompK;\E)}
	\end{equation*}
	are hypersheaves on $ \Comp $ (\Cref{cor:general_descent,rec:van_Kampen_colimits}, respectively), by choosing a hypercover of $ K $ by extremally disconnected profinite sets and applying \Cref{lem:limits_of_fully_faithful_functors}, we are reduced to the case where $ K $ is extremally disconnected.
	We conclude by \Cref{cor:extremally_disconnected_cupperstarpost_fully_faithful}.
\end{proof}

The full faithfulness of $ \cupperstarpost_X $ is preserved by passing to open subspaces:

\begin{lemma}\label{lem:full_faithfulness_for_opens}
	Let $ j \colon \incto{U}{X} $ be an open embedding of topological spaces and let $ \E $ be a compactly assembled \category.
	If $ \cupperstarpost_X \colon \fromto{\Shpost(X)}{\Shpost(\Comp_{/X})} $ is fully faithful, then the functor
	\begin{equation*}
		\cupperstarpost_U \colon \fromto{\Shpost(U;\E)}{\Shpost(\Comp_{/U};\E)}
	\end{equation*}
	is also fully faithful. 
\end{lemma}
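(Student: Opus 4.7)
The plan has three main movements: reduce to the case $\E = \Spc$ by tensoring, reduce to full faithfulness on $n$-truncated subcategories via \Cref{cor:Postnikov_completion_preserves_full_faithfulness}, and then use the naturality square of \Cref{lem:functoriality_of_comparison_for_opens} to transport full faithfulness from $X$ to $U$.

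Since the Postnikov-completed comparison $\cupperstarpost_U$ is a left exact left adjoint and $\E$ is compactly assembled, tensoring with $\E$ preserves full faithfulness by \cite[Lemma 2.14]{arXiv:2108.03545}, exactly as in the proof of \Cref{cor:extremally_disconnected_cupperstarpost_fully_faithful}. So it suffices to treat $\E = \Spc$. For this, by \Cref{cor:Postnikov_completion_preserves_full_faithfulness}, it is enough to show that for every integer $n \geq 0$, the restriction
\begin{equation*}
	\cupperstar_U \colon \Sh(U)_{\leq n} \to \Sh(\Comp_{/U})_{\leq n}
\end{equation*}
is fully faithful.

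The key input is the commutative square from \Cref{lem:functoriality_of_comparison_for_opens},
\begin{equation*}
\begin{tikzcd}[row sep=2em, column sep=3.5em]
	\Sh(U) \arrow[r, "\cupperstar_U"] \arrow[d, "\jlowershriek"', hooked] & \Sh(\Comp)_{/U} \arrow[d, "\textup{forget}", hooked] \\
	\Sh(X) \arrow[r, "\cupperstar_X"'] & \Sh(\Comp)_{/X}
\end{tikzcd}
\end{equation*}
in which both vertical arrows are fully faithful: $\jlowershriek$ because $\jupperstar \jlowershriek \simeq \id$, and the forgetful functor because $\yo(j) \colon \yo(U) \to \yo(X)$ is a monomorphism in $\Sh(\Comp)$ (the map $j$ is a monomorphism in $\Top$ and $\yo$ preserves limits). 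I would also observe that $\jlowershriek$ preserves $n$-truncated objects for $n \geq 0$: under the identification $\Sh(U) \simeq \Sh(X)_{/\yo(U)}$, $\jlowershriek$ becomes the forgetful functor, and since $\yo(U)$ is a subterminal object of $\Sh(X)$, the composition with $\yo(U) \to 1$ yields that a morphism $A \to \yo(U)$ is $n$-truncated if and only if $A$ is $n$-truncated in $\Sh(X)$. With these ingredients, for $F, G \in \Sh(U)_{\leq n}$ I chain
\begin{align*}
	\Map(F, G) &\simeq \Map(\jlowershriek F, \jlowershriek G) \\
	&\simeq \Map(\cupperstar_X \jlowershriek F, \cupperstar_X \jlowershriek G) \\
	&\simeq \Map(\cupperstar_U F, \cupperstar_U G),
\end{align*}
where the first step uses full faithfulness of $\jlowershriek$, the second applies the hypothesis (together with \Cref{cor:Postnikov_completion_preserves_full_faithfulness}) to the $n$-truncated objects $\jlowershriek F$ and $\jlowershriek G$, and the third combines the commutative square with full faithfulness of the forgetful functor.

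The main obstacle I anticipate is the bookkeeping around the claim that $\jlowershriek$ preserves $n$-truncated objects; while this is a standard topos-theoretic fact, it is the only place in the argument where the hypothesis that $j$ is an \emph{open} embedding (rather than a general monomorphism of spaces) enters in an essential way, via $\yo(U)$ being subterminal in $\Sh(X)$.
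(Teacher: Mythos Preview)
Your proof is correct and follows essentially the same approach as the paper: reduce to $\E = \Spc$ via \cite[Lemma 2.14]{arXiv:2108.03545}, use \Cref{cor:Postnikov_completion_preserves_full_faithfulness} to reduce to $n$-truncated objects, and then transport full faithfulness along the commutative square of \Cref{lem:functoriality_of_comparison_for_opens} using that both vertical arrows are fully faithful. The paper's write-up is terser and leaves implicit the point you take care to justify, namely that $\jlowershriek$ preserves $n$-truncated objects; your explanation via the subterminality of $\yo(U)$ in $\Sh(X)$ is the right one and makes the restricted square well-defined.
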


\begin{proof}
	Since \smash{$ \cupperstarpost_U \colon \fromto{\Sh(U)}{\Shpost(\Comp_{/U})} $} is left exact and $ \E $ is compactly assembled, by tensoring with $ \E $ it suffices to prove the claim for Postnikov sheaves valued in the \category of spaces \cite[Lemma 2.14]{arXiv:2108.03545}.
	In this case, \Cref{lem:functoriality_of_comparison_for_opens} shows that for each integer $ n \geq 0 $, the diagram
	\begin{equation*}
		\begin{tikzcd}[row sep=2em, column sep=3.5em]
			\Sh(U)_{\leq n} \arrow[r, "\cupperstar_U"] \arrow[d, "\jlowershriek"', hooked] & (\Sh(\Comp)_{/U})_{\leq n} \arrow[d, "\textup{forget}", hooked] \\
			\Sh(X)_{\leq n} \arrow[r, "\cupperstar_X"'] & (\Sh(\Comp)_{/X})_{\leq n}
		\end{tikzcd} 
	\end{equation*}
	commutes.
	Since $ \cupperstarpost_X $ is fully faithful, the bottom horizontal functor is fully faithful. 
	Hence the top horizontal functor is also fully faithful. 
	We conclude by \Cref{cor:Postnikov_completion_preserves_full_faithfulness}.
\end{proof}

Since every locally compact Hausdorff space embeds as an open in a compact Hausdorff space, \Cref{cor:cupperstarpost_fully_faithful_compact_Hausdorff,lem:full_faithfulness_for_opens} show:

\begin{corollary}\label{cor:cupperstarpost_fully_faithful_LCH}
	Let $ X $ be a locally compact Hausdorff space and $ \E $ a compactly assembled \category.
	Then the functor
	\begin{equation*}
		\cupperstarpost_X \colon \fromto{\Shpost(X;\E)}{\Shpost(\CompX;\E)}
	\end{equation*}
	is fully faithful.
\end{corollary}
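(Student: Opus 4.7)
The plan is to reduce the locally compact case to the already-proven compact case via an open embedding. Every locally compact Hausdorff space $X$ admits an open embedding $j \colon \incto{X}{X^+}$ into its one-point compactification $X^+$, which is a compact Hausdorff space (here if $X$ is already compact we can simply take $X^+ = X \sqcup \{\infty\}$, or just $X$ itself, either works). By \Cref{cor:cupperstarpost_fully_faithful_compact_Hausdorff} applied to $K = X^+$, the comparison functor
\begin{equation*}
    \cupperstarpost_{X^+} \colon \fromto{\Shpost(X^+;\E)}{\Shpost(\Comp_{/X^+};\E)}
\end{equation*}
is fully faithful. Now I apply \Cref{lem:full_faithfulness_for_opens} to the open embedding $j \colon \incto{X}{X^+}$ to conclude that
\begin{equation*}
    \cupperstarpost_X \colon \fromto{\Shpost(X;\E)}{\Shpost(\CompX;\E)}
\end{equation*}
is fully faithful, as desired.

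There is essentially no obstacle: the two ingredients have been assembled in the immediately preceding lemmas, so the proof is a one-line composition. The only thing worth double-checking is the existence of the open embedding into a compact Hausdorff space, which is standard (the Alexandroff one-point compactification of a locally compact Hausdorff space is compact Hausdorff, and the canonical inclusion is an open embedding).
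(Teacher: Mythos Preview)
Your proposal is correct and is essentially identical to the paper's own argument: the paper simply remarks that every locally compact Hausdorff space embeds as an open in a compact Hausdorff space and then cites \Cref{cor:cupperstarpost_fully_faithful_compact_Hausdorff,lem:full_faithfulness_for_opens}. Your choice of the one-point compactification makes the open embedding explicit, but otherwise the two proofs coincide.
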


We conclude this subsection with some cohomological consequences of \Cref{cor:cupperstarpost_fully_faithful_LCH}.
The following generalizes \cites[Theorem 3.11]{MR0448318}[Theorem 3.2]{Scholze:condensednotes}.

\begin{corollary}\label{cor:cohomology_comparison_general}
	Let $ X $ be a locally compact Hausdorff space, let $ R $ be a connective $ \Eup_1 $-ring spectrum, and let $ M $ be a bounded-above left $ R $-module spectrum. 
	Then the natural comparison map
	\begin{equation*}
		\fromto{\RGammasheaf(X;M)}{\RGammacond(X;M)}
	\end{equation*}
	is an equivalence in the \category $ \LMod(R) $ of left $ R $-module spectra.
\end{corollary}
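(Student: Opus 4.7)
The plan is to deduce the corollary from \Cref{cor:cupperstarpost_fully_faithful_LCH} applied with coefficient \category $\E = \LMod(R)$. Since $R$ is a connective $\Eup_1$-ring spectrum, the stable presentable \category $\LMod(R)$ is compactly generated (by shifts of $R$) and therefore compactly assembled (\Cref{rec:compactly_assembled}). Hence \Cref{cor:cupperstarpost_fully_faithful_LCH} yields that the comparison functor
\[
	\cupperstarpost_X \colon \Shpost(X;\LMod(R)) \to \Shpost(\CompX;\LMod(R))
\]
is fully faithful, equivalently that the unit of the adjunction between $\cupperstarpost_X$ and its right adjoint is an equivalence on every object.

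I would then apply this to the constant sheaves $M_X \in \Sh(X;\LMod(R))$ and $M_{\CompX} \in \Sh(\CompX;\LMod(R))$ associated with $M$. Because $M$ is bounded above, it is $n$-truncated for some integer $n$ in the standard \tstructure on $\LMod(R)$, so $M_X$ is $n$-truncated in $\Sh(X;\LMod(R))$. By \Cref{nul:Postnikov_completion_equivalence_on_truncated}, the truncated objects in $\Sh$ and $\Shpost$ coincide, so $M_X$ may be regarded as an object of $\Shpost(X;\LMod(R))$, and likewise for $M_{\CompX}$. Since $\cupperstar_X$ is a geometric morphism, it commutes with pullback of constant objects along the morphism to the terminal \topos, and hence $\cupperstarpost_X M_X \equivalent M_{\CompX}$. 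The unit equivalence thus furnishes an equivalence between $M_X$ and the pushforward of $M_{\CompX}$ along the right adjoint of $\cupperstarpost_X$ in $\Shpost(X;\LMod(R))$.

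Finally, apply the global sections functor $\Rup\Gamma(X;-)$. The global sections of $M_X$ compute $\RGammasheaf(X;M)$ by definition. Unwinding the identification $\Sh(\Comp)_{/X} \equivalent \Sh(\CompX)$ from \Cref{rec:slice_topos}, the global sections of $M_{\CompX}$ on $\CompX$ identify with $\RGammacond(X;M)$. The natural map in the statement is, by construction, the map on global sections induced by the unit of $\cupperstar_X \leftadjoint c_{X,\ast}$, so it coincides with the equivalence just produced.

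The main obstacle is pure bookkeeping rather than any substantive new ingredient: verifying that the unit equivalence in $\Shpost$, after passing to global sections, really realizes the "natural comparison map" of the statement, and that global sections on $\Shpost(\CompX;\LMod(R))$ recover condensed cohomology as defined via the restricted Yoneda embedding $\yo \colon \fromto{\Top}{\Sh(\Comp)}$. Both are definition chases, leaning on the description of $\cupperstar_X$ as the left Kan extension of $\fromto{\Open(X)}{\Sh(\Comp)_{/X}}$ from \cref{subsec:the_comparison_functor}, and on the fact that representable presheaves on $\Comp$ are sheaves.
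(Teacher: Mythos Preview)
Your argument is correct and is exactly the deduction the paper leaves implicit: apply \Cref{cor:cupperstarpost_fully_faithful_LCH} with $\E = \LMod(R)$, observe that $\cupperstarpost_X$ sends the constant sheaf at $M$ to the constant sheaf at $M$, and read off the unit equivalence on global sections. One small caution: the reference \Cref{nul:Postnikov_completion_equivalence_on_truncated} is stated for the $\infty$-topos truncation $\X_{\leq n}$ and for presentable $n$-categories, not for the \tstructure on the stable \category $\Sh(X;\LMod(R))$; what you actually need is the fact (mentioned in the introduction for $\D(R)$) that $\Shpost(X;\LMod(R))$ is the left completion of $\Sh(X;\LMod(R))$ with respect to its standard \tstructure, so that the bounded-above parts and their global sections agree---this is the precise reason the hypothesis that $M$ be bounded above enters.
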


\begin{remark}[(condensed, singular, and sheaf cohomology)]\label{rmk:comparing_condensed_singular_sheaf}
	Let $ R $ be a ring and $ X $ a topological space that is locally compact Hausdorff and locally weakly contractible.
	Write $ \Crm_{*}(X;R) \in \Dup(R) $ for the complex of \textit{singular chains} on $ X $.
	Given an object $ M \in \Dup(R) $, write 
	\begin{equation*}
		\Crm^{-*}(X;M) \colonequals \RHom_R(\Crm_{*}(X;R),M) \period
	\end{equation*}
	If $ M $ is an ordinary $ R $-module, then $ \Crm^{-*}(X;M) $ is what is usually referred to as the complex of \textit{singular cochains} on $ X $ with values in $ M $.

	If $ M $ is \tboundedabove, then \Cref{cor:cohomology_comparison_general} and \cite[Corollary 3.31]{arXiv:2010.06473} provide natural equivalences
	\begin{equation*}
		\RGammacond(X;M) \similarleftarrow \RGammasheaf(X;M) \equivalence \Crm^{-*}(X;M) \period
	\end{equation*}
	Hence the condensed, singular, and sheaf cohomologies of $ X $ all agree.
\end{remark}

\begin{nul}\label{nul:CW_comparing_condensed_singular_sheaf}
	Let $ X $ be a topological space that admits a locally finite CW structure.
	Since $ \Sh(X) $ is Postnikov complete, \Cref{cor:cohomology_comparison_general} and \cite[Corollary 3.31]{arXiv:2010.06473} actually imply that for \textit{any} object $ M \in \Dup(R) $, there are natural equivalences
	\begin{equation*}
		\RGammacond(X;M) \similarleftarrow \RGammasheaf(X;M) \equivalence \Crm^{-*}(X;M) \period
	\end{equation*}
\end{nul}


\subsection{The necessity of Postnikov completion}\label{subsec:cupperstar_Postnikov_completion_is_necessary}

We conclude by explaining why \Cref{cor:cupperstarpost_fully_faithful_LCH} is generally false before passing to Postnikov completions.
First we explain why \Cref{cor:cupperstarpost_fully_faithful_LCH} is false if $ \Shpost $ is replaced by $ \Sh $.

\begin{lemma}\label{lem:cupperstar_conservative_implies_hypercomplete}
	Let $ K $ be a compact Hausdorff space.
	Then the following are equivalent:
	\begin{enumerate}[label=\stlabel{lem:cupperstar_conservative_implies_hypercomplete}, ref=\arabic*]
		\item The functor $ \cupperstar_K \colon \fromto{\Sh(K)}{\Sh(\CompK)} $ is conservative.

		\item The \topos $ \Sh(K) $ is hypercomplete.
	\end{enumerate}
\end{lemma}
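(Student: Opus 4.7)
The plan is to exploit a surjection $ p \colon \surjto{S}{K} $ from an extremally disconnected profinite set $ S $, for instance the natural map $ \surjto{\SC(K^{\disc})}{K} $ from the Stone--Čech compactification of the underlying set (\cref{nul:extremally_disconnected_basis}). By the naturality of the comparison morphism (\Cref{lem:c_natural_in_pullbacks}), we obtain a commutative square
\begin{equation*}
	\begin{tikzcd}[row sep=2em, column sep=3.5em]
		\Sh(K) \arrow[r, "\cupperstar_K"] \arrow[d, "\pupperstar"'] & \Sh(\CompK) \arrow[d, "S \crosslimits_K (-)"] \\
		\Sh(S) \arrow[r, "\cupperstar_S"'] & \Sh(\CompS) \period
	\end{tikzcd}
\end{equation*}
Four features of this square drive the argument: the bottom horizontal $ \cupperstar_S $ is fully faithful, in particular conservative (\Cref{cor:extremally_disconnected_cupperstar_fully_faithful}); the right vertical $ S \crosslimits_K (-) $ is base change along the effective epimorphism $ \yo(p) $ in the \topos $ \Sh(\Comp) $, hence is conservative; the \topos $ \Sh(S) $ is Postnikov complete and in particular hypercomplete (\Cref{ex:Sh_on_profinset}); and $ p^{\ast,\hyp} \colon \fromto{\Shhyp(K)}{\Shhyp(S)} $ is conservative (\Cref{obs:pullback_conservativity}).

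For $ \enumref{lem:cupperstar_conservative_implies_hypercomplete}{2} \Rightarrow \enumref{lem:cupperstar_conservative_implies_hypercomplete}{1} $, suppose $ \Sh(K) $ is hypercomplete and $ \cupperstar_K(f) $ is an equivalence. Going around the square, $ \cupperstar_S \pupperstar(f) \equivalent (S \crosslimits_K -)\cupperstar_K(f) $ is an equivalence, so the conservativity of $ \cupperstar_S $ shows $ \pupperstar(f) $ is an equivalence. Since $ \Sh(K) = \Shhyp(K) $, the conservativity of $ p^{\ast,\hyp} $ then forces $ f $ to be an equivalence.

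For $ \enumref{lem:cupperstar_conservative_implies_hypercomplete}{1} \Rightarrow \enumref{lem:cupperstar_conservative_implies_hypercomplete}{2} $, suppose $ \cupperstar_K $ is conservative; it suffices to show every $ \infty $-connective morphism $ f $ in $ \Sh(K) $ is an equivalence. Since $ \pupperstar $ is a left exact left adjoint, $ \pupperstar(f) $ is $ \infty $-connective in $ \Sh(S) $, hence an equivalence by hypercompleteness of $ \Sh(S) $. Therefore $ (S \crosslimits_K -)\cupperstar_K(f) \equivalent \cupperstar_S\pupperstar(f) $ is an equivalence, and conservativity of $ S \crosslimits_K (-) $ gives that $ \cupperstar_K(f) $ is an equivalence; the hypothesis on $ \cupperstar_K $ then yields $ f \equivalent \star $.

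The only point requiring care is the conservativity of the right vertical $ S \crosslimits_K (-) $ on $ \Sh(\Comp)_{/K} $; this reduces to the standard fact that in an \topos, base change along an effective epimorphism is conservative, once one invokes that $ \yo $ sends the surjection $ p $ in $ \Comp $ to an effective epimorphism in $ \Sh(\Comp) $. Everything else in the argument is an application of results already established earlier in the paper.
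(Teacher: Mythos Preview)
Your proof is correct and follows essentially the same approach as the paper's: both set up the naturality square for the surjection $p\colon S\twoheadrightarrow K$ from an extremally disconnected profinite set, use the full faithfulness of $\cupperstar_S$ and the conservativity of base change along the effective epimorphism $\yo(p)$, and exploit the hypercompleteness of $\Sh(S)$. The paper packages the argument slightly more cleanly by first extracting the biconditional ``$\cupperstar_K$ is conservative if and only if $\pupperstar$ is conservative'' from the square, and then separately observing that $\pupperstar$ is conservative if and only if $\Sh(K)$ is hypercomplete; you instead unroll the two implications directly, but the content is identical.
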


\begin{proof}
	Choose an extremally disconnected profinite set $ S $ equipped with a surjection $ p \colon \surjto{S}{K} $.
	Consider the commutative square
	\begin{equation*}
		\begin{tikzcd}[row sep=2em, column sep=3.5em]
			\Sh(K) \arrow[r, "\cupperstar_K"] \arrow[d, "\pupperstar"'] & \Sh(\Comp)_{/K} \arrow[d, "S \cross_K (-)"] \\
			\Sh(S) \arrow[r, "\cupperstar_S"'] & \Sh(\Comp)_{/S} \period
		\end{tikzcd} 
	\end{equation*}
	Since $ S $ is an extremally disconnected profinite set, $ \cupperstar_S $ is fully faithful (\Cref{cor:comp_covering_lifting_consequences}).
	Since $ p $ is surjective and the functor $ \Sh(\Comp)_{/(-)} $ satisfies descent for surjections of compact Hausdorff spaces, the right-hand vertical functor is conservative.
	Hence $ \cupperstar_K $ is conservative if and only if $ \pupperstar $ is conservative.
	To conclude, note that since $ p $ is surjective and $ \Sh(S) $ is hypercomplete, the pullback functor $ \pupperstar $ is conservative if and only if $ \Sh(K) $ is hypercomplete.
\end{proof}

\begin{warning}\label{warning:cupperstar_not_fully_faithful}
	Since there exist compact Hausdorff spaces $ K $ for which $ \Sh(K) $ is not hypercomplete \HTT{Counterexample}{6.5.4.8}, \Cref{lem:cupperstar_conservative_implies_hypercomplete} implies that the functor $ \cupperstar_K $ is \textit{not} generally fully faithful.
\end{warning}

Now we explain why \Cref{cor:cupperstarpost_fully_faithful_LCH} is false if $ \Shpost $ is replaced by $ \Shhyp $.

\begin{lemma}\label{lem:cupperstarhyp_fully_faithful_implies_Postnikov_complete}
	Let $ X $ be a topological space.
	If \smash{$ \cupperstarhyp_X \colon \fromto{\Shhyp(X)}{\Shhyp(\Comp)_{/X}} $} fully faithful, then every object of \smash{$ \Shhyp(X) $} is the limit of its Postnikov tower.
\end{lemma}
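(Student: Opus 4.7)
The plan is to transfer convergence of Postnikov towers from the target topos down along the fully faithful functor. The key input is that $ \Shhyp(\Comp) $ is Postnikov complete by \cref{nul:ShhypComp_Postnikov_complete}; since Postnikov completeness is stable under slicing, the slice $ \Shhyp(\Comp)_{/X} $ is Postnikov complete as well.

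Fix $ F \in \Shhyp(X) $ and set $ G \colonequals \cupperstarhyp_X(F) $. Since $ \cupperstarhyp_X $ is left exact (as the upper star of a geometric morphism), it commutes with $ n $-truncation: \smash{$ \trun_{\leq n} G \equivalent \cupperstarhyp_X(\trun_{\leq n} F) $} for every $ n \geq 0 $. Postnikov completeness of the target then gives
\begin{equation*}
	G \equivalent \lim_{n \geq 0} \trun_{\leq n} G \equivalent \lim_{n \geq 0} \cupperstarhyp_X(\trun_{\leq n} F) \period
\end{equation*}
Applying the right adjoint $ c_{X,\ast}^{\hyp} $ of $ \cupperstarhyp_X $, which preserves limits, produces
\begin{equation*}
	c_{X,\ast}^{\hyp}(G) \equivalent \lim_{n \geq 0} c_{X,\ast}^{\hyp}\cupperstarhyp_X(\trun_{\leq n} F) \period
\end{equation*}
Full faithfulness of $ \cupperstarhyp_X $ amounts to the unit $ H \to c_{X,\ast}^{\hyp}\cupperstarhyp_X(H) $ being an equivalence for every $ H \in \Shhyp(X) $. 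Applied with $ H = F $ on the left and $ H = \trun_{\leq n} F $ inside the limit on the right, this yields $ F \equivalent \lim_{n \geq 0} \trun_{\leq n} F $, as required.

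The step deserving some care is the stability of Postnikov completeness under slicing. This is a standard topos-theoretic fact but is not explicitly recorded earlier in the excerpt; if needed, one can verify it by presenting $ \Shhyp(\Comp)_{/X} $ as a limit of slices over a hypercover of $ X $ by extremally disconnected profinite sets (using \cref{nul:extremally_disconnected_basis} and \cref{ex:Sh_on_profinset}) and invoking closure of $ \LToppost $ under limits \cref{rec:Postnikov_completion_preserves_limits_colimits}.
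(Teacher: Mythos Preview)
Your main argument is correct and takes a more hands-on route than the paper's. The paper packages the transfer of Postnikov-tower convergence along a fully faithful left exact left adjoint into a separate lemma (\Cref{lem:colocalization_of_Postnikov_complete}), proved by chasing the commutative square of Postnikov-completion functors and invoking \Cref{cor:Postnikov_completion_preserves_full_faithfulness}. You instead unwind the adjunction directly: $ \cupperstarhyp_X $ commutes with truncation, the target has convergent Postnikov towers, and the unit of the adjunction is an equivalence by full faithfulness. This is more elementary and avoids the Postnikov-completion formalism entirely; the paper's approach has the advantage of isolating a reusable general statement about \topoi. One phrasing nit: commutation with $ \trun_{\leq n} $ uses that $ \cupperstarhyp_X $ is a left exact \emph{left adjoint}, not merely that it is left exact; your parenthetical makes clear you know this, but the sentence as written slightly overstates what left exactness alone buys.

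On the slicing step: your reference to \Cref{ex:Sh_on_profinset} is off, since that example concerns $ \Sh(S) $ for a profinite set $ S $, not $ \Shhyp(\Comp)_{/S} $, so the hypercover reduction does not land where you want. A cleaner fix is available and avoids hypercovers altogether: $ \yo(X) $ is a sheaf of \emph{sets}, hence $ 0 $-truncated in $ \Shhyp(\Comp) $, so for any $ V \to \yo(X) $ the relative $ n $-truncation in the slice is just $ \trun_{\leq n} V \to \yo(X) $, and limits in the slice are computed in $ \Shhyp(\Comp) $. Postnikov completeness then passes directly from $ \Shhyp(\Comp) $ to $ \Shhyp(\Comp)_{/X} $. (The paper's own proof cites \cref{nul:ShhypComp_Postnikov_complete} for the slice without further comment, so you are in fact being more careful here than the paper is.)
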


\begin{lemma}\label{lem:colocalization_of_Postnikov_complete}
	Let $ \fupperstar \colon \incto{\Y}{\X} $ be a fully faithful left exact left adjoint between \topoi.
	If every object of $ \X $ is the limit of its Postnikov tower, then every object of $ \Y $ is the limit of its Postnikov tower.
\end{lemma}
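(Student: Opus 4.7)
The plan is to exploit that $\fupperstar$ being a left exact left adjoint forces it to commute with $n$-truncation, together with the full faithfulness of $\fupperstar$ (equivalently, the unit $\eta\colon \id_\Y \to \flowerstar\fupperstar$ being an equivalence). Given $W \in \Y$, the goal is to turn the Postnikov convergence of $\fupperstar(W)$ in $\X$ into Postnikov convergence of $W$ in $\Y$ by applying the (limit-preserving) right adjoint $\flowerstar$.

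First, I would record that for every $W \in \Y$ and every $n \geq 0$ the natural comparison map
\begin{equation*}
    \trun_{\leq n}\fupperstar(W) \longrightarrow \fupperstar\trun_{\leq n}(W)
\end{equation*}
is an equivalence in $\X$. This holds because $\fupperstar$ is left exact, so it preserves $n$-truncated objects (inductively, via the characterization of $n$-truncatedness by $(n-1)$-truncatedness of the diagonal); hence the right hand side is already $n$-truncated and the comparison is an equivalence by the universal property of $\trun_{\leq n}$.

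Second, I would feed $\fupperstar(W)$ into the Postnikov convergence hypothesis on $\X$, and then rewrite using the first step, to obtain
\begin{equation*}
    \fupperstar(W) \equivalent \lim_{n \geq 0}\trun_{\leq n}\fupperstar(W) \equivalent \lim_{n \geq 0}\fupperstar\trun_{\leq n}(W).
\end{equation*}
Applying $\flowerstar$, which preserves limits as a right adjoint, and using full faithfulness of $\fupperstar$ (so $\flowerstar\fupperstar \equivalent \id_\Y$), this yields
\begin{equation*}
    W \equivalent \flowerstar\fupperstar(W) \equivalent \flowerstar \lim_{n \geq 0}\fupperstar\trun_{\leq n}(W) \equivalent \lim_{n \geq 0}\flowerstar\fupperstar\trun_{\leq n}(W) \equivalent \lim_{n \geq 0}\trun_{\leq n}(W).
\end{equation*}

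The argument is essentially formal, so I do not anticipate a serious obstacle. The only point requiring care is that the composite equivalence produced above agrees with the canonical map $W \to \lim_{n \geq 0}\trun_{\leq n}W$; this is a direct consequence of the naturality of the unit $\eta$ and of the comparison maps $\trun_{\leq n}\fupperstar \to \fupperstar\trun_{\leq n}$, so the verification reduces to chasing standard identities in the adjunction $\fupperstar \leftadjoint \flowerstar$.
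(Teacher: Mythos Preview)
Your argument is correct but takes a more hands-on route than the paper. The paper works at the level of Postnikov completions: it forms the commutative square
\begin{equation*}
\begin{tikzcd}
\Y \arrow[r, "\tupperstar_{\Y}"] \arrow[d, "\fupperstar"', hooked] & \Ypost \arrow[d, "\fupperstarpost"] \\
\X \arrow[r, "\tupperstar_{\X}"'] & \Xpost
\end{tikzcd}
\end{equation*}
notes that $\fupperstar$ and $\tupperstar_{\X}$ are fully faithful by hypothesis and that $\fupperstarpost$ is fully faithful (since $\fupperstar$ is fully faithful on each $\Y_{\leq n}$), and concludes that $\tupperstar_{\Y}$ is fully faithful---which is equivalent to the desired statement. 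Your approach instead stays inside $\Y$ and $\X$, manipulating the towers directly via the adjunction identity $\flowerstar\fupperstar \equivalent \id_{\Y}$. Both proofs are short; yours is more self-contained, while the paper's slots into the $(-)^{\post}$ formalism already developed.

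One small imprecision worth flagging: the claim that $\trun_{\leq n}\fupperstar(W) \to \fupperstar\trun_{\leq n}(W)$ is an equivalence does not follow from left-exactness alone. Left-exactness guarantees that the target is $n$-truncated, which produces the comparison map, but to see that it is an equivalence you also need that $\fupperstar$ preserves the $(n+1)$-connectivity of $W \to \trun_{\leq n} W$. This uses that $\fupperstar$ is in addition a colimit-preserving functor between \topoi; see \HTT{Proposition}{5.5.6.28}. With that adjustment the argument goes through as written.
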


\begin{proof}	
	By \Cref{obs:pullback_to_Postnikov_completion_fully_faithful}, we need to show that \smash{$ \tupperstar_{\Y} \colon \fromto{\Y}{\Ypost} $} is fully faithful.
	Consider the commutative square 
	\begin{equation*}
		\begin{tikzcd}
			\Y \arrow[r, "\tupperstar_{\Y}"] \arrow[d, "\fupperstar"', hooked] & \Ypost \arrow[d, "\fupperstarpost"] \\ 
			\X \arrow[r, "\tupperstar_{\X}"'] & \Xpost \period
		\end{tikzcd}
	\end{equation*}
	By \Cref{cor:Postnikov_completion_preserves_full_faithfulness}, the functor $ \fupperstarpost $ is fully faithful, and by assumption $ \tupperstar_{\X} $ is fully faithful.
	Hence $ \tupperstar_{\Y} $ is also fully faithful. 
\end{proof}

\begin{proof}[Proof of \Cref{lem:cupperstarhyp_fully_faithful_implies_Postnikov_complete}]
	Combine \Cref{lem:colocalization_of_Postnikov_complete} with the fact that $ \Shhyp(\Comp)_{/X} $ is Postnikov complete \Cref{nul:ShhypComp_Postnikov_complete}.
\end{proof}

\begin{warning}\label{warning:cupperstarhyp_not_fully_faithful}
	\Cref{ex:not_all_hypersheaves_are_limits_of_their_Postnikov_towers,lem:cupperstarhyp_fully_faithful_implies_Postnikov_complete} show that for \smash{$ X = \prod_{m \geq 1} \Sph{m} $}, the functor \smash{$ \cupperstarhyp_X $} is \textit{not} fully faithful.
\end{warning}


\DeclareFieldFormat{labelnumberwidth}{#1}
\printbibliography[keyword=alph, heading=references]
\DeclareFieldFormat{labelnumberwidth}{{#1\adddot\midsentence}}
\printbibliography[heading=none, notkeyword=alph]

\end{document}